% This document is compiled using pdfLaTeX

\documentclass{article}
\usepackage[top=1in, bottom=1in, left=1in, right=1in]{geometry}
\usepackage[table]{xcolor}
\usepackage{geometry}
\usepackage{amsthm}
\usepackage{longtable}
\usepackage{amsmath}
\usepackage{graphicx}
\usepackage{float}

\usepackage{tikz}
\usepackage{amssymb}
\usepackage{enumitem}

\usepackage[lined,boxed,ruled]{algorithm2e}
\usepackage{multirow}
\usepackage[colorlinks,
          linkcolor = teal,		
          anchorcolor = blue,	
          urlcolor = blue,		
          citecolor = teal,
           ]{hyperref}
\usepackage{appendix}
\usepackage{caption}
\usepackage{booktabs}       % professional-quality tables
\usepackage[figuresright]{rotating} % rotate wide table
\usepackage{makecell} % multiline in a table cell
\usepackage{multirow}       % multi rows in tabular

\usepackage{xparse}

\usepackage[numbers]{natbib}

\newtheorem{definition}{Definition}[section]
\newtheorem{lemma}{Lemma}[section]

\newtheorem{remark}{Remark}[section]

\newtheorem{thom}{Theorem}[section]

\newcommand\msc[1]{\textbf{Mathematical Subject Classifications}: #1}

\geometry{a4paper}

\allowdisplaybreaks[4] % allow display breaks

\begin{document}
\title{Relaxed Proximal Point Algorithm: Tight Complexity Bounds and Acceleration without Momentum}

\author{
Bofan Wang\footnotemark[1], \
Shiqian Ma\footnotemark[2],\hspace{0.03cm}\footnotemark[3] \
Junfeng Yang\footnotemark[1],\hspace{0.03cm}\footnotemark[3] \
Danqing Zhou\footnotemark[1]}

\footnotetext[1]{School of Mathematics, Nanjing University, Nanjing,  P. R. China. Research supported by National Natural Science Foundation of China (12371301, 12431011). Email: wangbf@smail.nju.edu.cn,  zhoudanqing@smail.nju.edu.cn.}

\footnotetext[2]{Department of Computational Applied Mathematics and Operations Research, Rice University, Houston, TX, USA.}

\footnotetext[3]{Corresponding authors: Shiqian Ma (sqma@rice.edu) and Junfeng Yang (jfyang@nju.edu.cn).}

\date{}

\maketitle
\begin{abstract}
In this paper, we focus on the \emph{relaxed proximal point algorithm} (RPPA) for solving convex (possibly nonsmooth) optimization problems. 
We conduct a comprehensive study on three types of relaxation schedules: (i) constant schedule with relaxation parameter $\alpha_k\equiv \alpha \in (0, \sqrt{2}]$, (ii) the dynamic schedule put forward by Teboulle and Vaisbourd \cite{TV23}, and (iii) the silver stepsize schedule proposed by Altschuler and Parrilo \cite{AP23b}. The latter two schedules were initially investigated for the \emph{gradient descent} (GD) method and are extended to the RPPA in this paper.
For type (i),  we establish tight non-ergodic $O(1/N)$ convergence rate results measured by function value residual and subgradient norm, where $N$ denotes the iteration counter.
For type (ii), we establish a convergence rate that is tight and approximately $\sqrt{2}$ times better than the constant schedule of type (i). 
For type (iii), aside from the original silver stepsize schedule put forward by Altschuler and Parrilo, we propose two new modified silver stepsize schedules, and 
for all the three silver stepsize schedules, $O(1 / N^{1.2716})$ accelerated convergence rate results with respect to three different performance metrics are established.
Furthermore, our research affirms the conjecture in \cite[Conjecture 3.2]{LG24} on GD method with the original silver stepsize schedule.
\end{abstract}
 
\noindent \msc 15A18, 15A69, 65F15, 90C33

%____________________introduction____________________
\section{Introduction}

In this paper, we consider the following general convex optimization problem
\begin{equation}
  \begin{array}{ll}
    \underset{x\in\mathbb{R}^{d}}{\mbox{minimize}} & f(x),\end{array}  \tag{\ensuremath{\mathcal{Q}}}\label{eq:merely_cvx_problem}
\end{equation}
where $f \colon \mathbb{R}^{d}\to \mathbb{R}\cup\{\infty\}$ is an extended-real-valued proper, closed, and convex (possibly nonsmooth) function.
Denote the optimal solution set of \eqref{eq:merely_cvx_problem} by $\mathcal{X}^{\star}$, which is assumed to be nonempty. Throughout this paper, we let  $x^{\star}\in \mathcal{X}^{\star}$ be arbitrarily fixed. 
The \emph{proximal mapping} \cite{Mor65} of $f$ is defined by
\begin{equation} \label{def-prox}
    \mathrm{Prox}_{\lambda f}(x) = \arg \min_{y \in \mathbb{R}^d} \left\{f(y)+\frac{1}{2\lambda}\|y-x\|^2\right\}, \quad \forall x\in\mathbb{R}^d, \; \forall \lambda > 0.
\end{equation}
A classic approach for solving \eqref{eq:merely_cvx_problem} is the \emph{proximal point algorithm} (PPA), which, starting at $x^0 \in \mathbb{R}^{d}$, iterates for $k\geq 0$ as $x^{k+1} = \mathrm{Prox}_{\lambda f}(x^k)$.
PPA was first introduced in \cite{Mar70} and was further studied by Rockafellar \cite{Roc76a, Roc76b}. 

Convergence rate results of PPA with varying proximal parameters $\{ \lambda_k \}_{k=0}^{N-1}$ for solving \eqref{eq:merely_cvx_problem} was first established by G{\"u}ler  \cite{Gul91}, where $O(1/ \sum_{k=0}^{N-1} \lambda_k)$ complexity bounds for both function value residual and subgradient norm were derived.
Recently, the results derived by \cite{Gul91} were tightened by improving a constant factor in \cite{THG17a}.

A common technique for accelerating convergence is relaxation, which originates from methods like successive over-relaxation for solving linear systems \cite{Sou46, Var62} and Krasnosel'skiǐ-Mann (KM) iteration for fixed-point algorithms \cite{Kra55, Man53}.
Given a positive integer $N\in\mathbb{N}_+$,  an initialization point $x^0 \in \mathbb{R}^{d}$ and $N$ relaxation parameters $\{\alpha_0,\alpha_1, \ldots, \alpha_{N-1}\}\subseteq  \mathbb{R}_{++}$,  the $N$-step \emph{relaxed proximal point algorithm} (RPPA) for solving \eqref{eq:merely_cvx_problem} 
is summarized in Algorithm \ref{rppa}. The sequence of relaxation parameters, denoted by vector $\boldsymbol{\alpha} = (\alpha_0,\alpha_1, \ldots ,\alpha_{N-1})  \in \mathbb{R}^N_{++}$, is also referred to as the relaxation schedule. 
It has been widely recognized empirically that relaxation, especially over-relaxation, i.e., $\alpha_i>1$, can enhance the numerical performance of fixed-point algorithms and optimization methods, see, e.g., \cite{EB92,Boy+11,CGH14}.

\begin{algorithm}[!htb]
\caption{Relaxed Proximal Point Algorithm}\label{rppa}
  \SetKwInOut{Input}{input}
  \SetKwInOut{Output}{output}
  \Input{$x^{0} \in \mathbb{R}^d$, $\lambda>0$, $N\in \mathbb{N}_+$, $\boldsymbol{\alpha} \in  \mathbb{R}^N_{++}$}
  \For{$k=0,1,2,\ldots,N-1$}{
  $z^{k} = \mathrm{Prox}_{\lambda f}(x^{k})$ \dotfill (Proximal step)\label{alg:prox-step}\\
  $x^{k+1} = x^{k}+ \alpha_{k}(z^{k}-x^{k})$ \dotfill (Relaxation step)\label{alg:relax-step}
  }
  \Output{$z^{N} = \mathrm{Prox}_{\lambda f}(x^{N})$}
\end{algorithm}

Various studies have analyzed the convergence of RPPA in different settings.
For instance, it has been applied to solving variational inequality problems in \cite{LY13, GY22}, where ergodic sublinear convergence rates were established.  
However, the performance measures used in these works cannot be applied to our setting.
In \cite{CY14, GHY14}, RPPA was analyzed for solving monotone inclusion and mixed variational inequality problems.
While the performance measures used in these works can be adapted to our setting, only ergodic convergence rates were provided.
Recently, with the fixed point residual being used as the performance measure, tight non-ergodic convergence rate results of the KM iteration were established in \cite{Lie18}, and these results also apply to RPPA.
Detailed convergence bound results of RPPA under different measures for different stepsize schedules, along with the corresponding references, are summarized in Table \ref{tab:RPPA-convergence-compare}.

Despite the various studies on RPPA, current research has several limitations.
As far as we know, there has been no analysis regarding the tight convergence rate of RPPA with a constant relaxation schedule for solving \eqref{eq:merely_cvx_problem}.
Moreover, existing studies fail to demonstrate a clear advantage of RPPA over PPA in terms of the order of convergence rate.
In this paper, we conduct a theoretical examination of how different relaxation schedules impact the worst-case guarantees of PPA. We will study existing relaxation schedules and also propose new ones.
In particular, we establish tight convergence rate results for RPPA and show that RPPA can achieve improved complexity bounds compared to PPA.
As byproducts, we also solve some conjectures in the literature by providing their formal proofs.
A majority of the main results derived in this paper are summarized in Table \ref{tab:RPPA-convergence-compare}.

\begin{sidewaystable}
  \centering
  \begin{tabular}{@{}lllll@{}}
  \toprule
    Measure
    & Stepsize Schedule $\boldsymbol{\alpha}$
    & $N$ arbitrary?
    & Upper Bound
    & Lower Bound \\
  \midrule
    \multirow{4}{*}{$\frac{f(z^N)-f(x^\star)}{\|x^0-x^\star\|^2}$}
    & $\alpha_k\equiv 1$ (reduce to PPA)
    & Yes
    & $\frac{1}{1+N} \frac{1}{4\lambda}$ \cite[Thm. 4.1]{THG17a}
    & \multirow{4}{*}{\begin{tabular}[c]{@{}c@{}}
        $\frac{1}{1+\sum_{k=0}^{N-1}\alpha_k} \frac{1}{4\lambda}$\\ 
        $[\textbf{Ours, Lem. \ref{RPPA_general_lower_bound} (ii)}]$
        \end{tabular}} \\
    & $\alpha_k\equiv \alpha \in (0,\sqrt{2}]$
    & Yes
    & $\frac{1}{1+N\alpha} \frac{1}{4\lambda}$  $[\textbf{Ours, Thm. \ref{thom-tight-analysis}}]$
    & \\ 
  \cmidrule(lr){2-4}
    & Teboulle and Vaisbourd's schedule \eqref{dynamic_stepsize} \cite{TV23}
    & Yes
    & $\frac{1}{1+\sum_{k=0}^{N-1}\alpha_k}\frac{1}{4\lambda}\sim\frac{1}{1+2N}\frac{1}{4\lambda}$ $[\textbf{Ours, Thm. \ref{RPPA_short_dynamic}}]$
    & \\ 
  \cmidrule(lr){2-4}
    & Right silver schedule $\overline{\pi}^{(m)}$ \eqref{def:right_silver} $[\textbf{Ours}]$ 
    & $N=2^{m-1}$
    & $\frac{1}{1+\sum_{k=0}^{N-1}\alpha_k}\frac{1}{4\lambda}\sim \frac{1}{ N^{\log_2 \rho}}\frac{1}{4\lambda}$ $[\textbf{Ours, Thm. \ref{thm:objval-vs-ptnorm}}]$
    & \\
  \midrule
    \multirow{2}{*}{$\frac{\|\nabla f^\lambda(x^N)\|^2}{f(x^0)-f(x^\star)}$} 
    & \multirow{2}{*}{Left silver schedule $\underline{\pi}^{(m)}$ \eqref{def:left_silver} $[\textbf{Ours}]$} 
    & \multirow{2}{*}{$N=2^m$} 
    & \multirow{2}{*}{$\frac{1}{1+\sum_{k=0}^{N-1}\alpha_k}\frac{1}{\lambda}\sim \frac{1}{ N^{\log_2 \rho}}\frac{1}{\lambda}$ $[\textbf{Ours, Thm. \ref{thm:subgdnorm-vs-objval}}]$}
    & \multirow{2}{*}{\begin{tabular}[c]{@{}c@{}}
        $\frac{1}{1+\sum_{k=0}^{N-1}\alpha_k}\frac{1}{\lambda}$\\
        $[\textbf{Ours, Lem. \ref{RPPA_general_lower_bound} (iii)}]$
      \end{tabular}} \\
    &
    &
    &
    & \\ 
  \midrule
    \multirow{4}{*}{$\frac{\|\nabla f^\lambda(x^N)\|}{\|x^0-x^\star\|}$}
    & $\alpha_k\equiv \alpha \in [1,2)$
    & Yes
    & \begin{tabular}[c]{@{}ll@{}}
      $\frac{1}{\sqrt{N+1}} \left( \frac{N}{N+1} \right)^{N /2} \frac{1}{\lambda \sqrt{\alpha (2-\alpha)}}$, &   if $1\le \alpha\le 1+\sqrt{\frac{N}{N+1}}$ \\
      $(\alpha-1)^N \frac{1}{\lambda}$, &  if $1+\sqrt{\frac{N}{N+1}}<\alpha < 2$ \\ 
      \cite[Thm. 4.9]{Lie18} & 
      \end{tabular} 
    & \multirow{4}{*}{\begin{tabular}[c]{@{}c@{}}
      $\frac{1}{1+\sum_{k=0}^{N-1}\alpha_k}\frac{1}{\lambda}$\\ 
      $[\textbf{Ours, Lem. \ref{RPPA_general_lower_bound} (i)}]$
      \end{tabular}}  \\
  \cmidrule(lr){2-4}
    & $\alpha_k \equiv  1$ (reduce to PPA)
    & Yes
    & $\frac{1}{1+N} \frac{1}{\lambda}$ \cite[Thm. 2.2]{Gul91}
    & \\
  \cmidrule(lr){2-4}
    & $\alpha_k\equiv \alpha \in (0,\sqrt{2}]$
    & Yes
    & 
      $\frac{1}{1+\alpha N} \frac{1}{\lambda}$
      $[\textbf{Ours, Rmk. \ref{rmk:constant_step_subgdnorm}}]$
    & \\ 
  \cmidrule(lr){2-4}
    & Silver schedule $\pi^{(m)}$ \eqref{def:silver_step} \cite{AP23b}
    & $N=2^m-1$
    & $\frac{1}{1+\sum_{k=0}^{N-1}\alpha_k}\frac{1}{\lambda} \sim \frac{1}{ N^{\log_2 \rho}}\frac{1}{\lambda}$  $[\textbf{Ours, Thm. \ref{thm:RPPA_silver_tight_basic}}]$
    & \\ 
  \bottomrule
  \end{tabular}
  \caption{{Convergence bound results of RPPA with different stepsize schedules under different measures for solving \eqref{eq:merely_cvx_problem}.
   $\rho=1+\sqrt{2}\approx 2.414$ is the silver ratio.
  \label{tab:RPPA-convergence-compare}}}
\end{sidewaystable}

\subsection{Connection with gradient descent methods} 
Our study is closely related to the recent advances in the analysis of \emph{gradient descent} (GD) methods  \cite{DT14,TV23,AP23b,GSW24}. This is because RPPA can be viewed as applying GD method to the Moreau envelope of $f$, which is defined as 
$$
f^\lambda (x) := \min_{u \in  \mathbb{R}^{d}} \left\{ f(u) + \frac{1}{2\lambda} \left\| x-u \right\|^{2} \right\}, \quad x\in\mathbb{R}^d, \; \lambda>0.
$$
It is well known that $f^\lambda$ is convex, $(1 / \lambda)$-smooth, i.e., it is differentiable and its gradient is $(1/\lambda)$-Lipschitz continuous, and furthermore, it shares the same set of minimizers as $f$, see, e.g., \cite[Sec. 6.7]{Bec17}.
In particular, for $k\in \{\star, 0,1,2,\ldots, N-1\}$, the sequence generated by Algorithm \ref{rppa} together with $z^\star := \mathrm{Prox}_{\lambda f}(x^\star) = x^\star$ satisfy
\begin{align}
  \frac{x^k - z^{k}}{\lambda}  &= \frac{x^k - \mathrm{Prox}_{\lambda f}(x^k)}{\lambda} = \nabla f^{\lambda}(x^k) \in  \partial f(z^{k}), \label{eq:gd_mor_envlp}\\
  f^{\lambda}(x^k) &= f(z^k) + \frac{1}{2\lambda} \left\| x^k- z^k \right\|^{2} = f(z^k) + \frac{\lambda}{2} \left\| \nabla f^{\lambda}(x^k) \right\|^{2}. \label{eq:fval_mor_envlp}
\end{align}
As a result, the iterative scheme of RPPA can be equivalently expressed as 
\begin{equation}
  x^{k+1} = x^k + \alpha_k(\mathrm{Prox}_{\lambda f}(x^k) - x^k)  \equiv   x^k - \alpha_k \lambda \nabla f^{\lambda} (x^k), \quad k=0,1, \ldots ,N-1 ,\label{alg:R-PPA-compact-form}
\end{equation}
which is precisely the $N$-step GD method  
applied to the $(1 / \lambda)$-smooth function $f^{\lambda}$ with stepsize schedule $\boldsymbol{\alpha}=(\alpha_0,\alpha_1, \ldots ,\alpha_{N-1})$ normalized by the Lipschitz constant $(1/\lambda)$.
For this reason, we will also refer to the relaxation schedule $\boldsymbol{\alpha}$ of RPPA as the stepsize schedule.

Given the connection expressed in \eqref{alg:R-PPA-compact-form} between RPPA and GD methods, it appears that the convergence results of GD methods can be extended to RPPA in a straightforward manner. However, if extended in this way, the resulting results are characterized by the Moreau envelope $f^\lambda$, whereas our primary interest lies in $f$ itself.
Moreover, the results obtained in this manner are usually worse than the tight complexity bound that we desire.
Next, we will review the key results from the GD literature.  
This will help place our contributions in context.

%____________________prior art_______________________

\subsection{Prior arts on stepsize design for gradient descent methods} \label{pa}

Let $x^0 \in \mathbb{R}^{d}$ be a given starting point.
Consider minimizing an $L$-smooth convex function $h$ via the $N$-step GD method with stepsize schedule $\boldsymbol{\alpha}=(\alpha_0,\alpha_1, \ldots ,\alpha_{N-1}) \in \mathbb{R}^{N}_{++}$ normalized by $L$, that is
\begin{equation} \label{alg:N-step-GD}
  x^{k+1} = x^k - \frac{\alpha_k}{L} \nabla h(x^k), \quad k=0,1, \ldots ,N-1.
\end{equation}
Traditional analysis of GD is confined to the scenario of ``short stepsizes". In this case, the (normalized) stepsizes $\alpha_{k}$ are chosen in $(0, 2)$, as seen \cite{DT14, Kim+23,THG17b, TV23}.
This strategy ensures a decrease in the function value at each iteration.
For constant stepsizes $\alpha_k\equiv \alpha \in (0,1]$, tight convergence rate results have been established under three optimality measures, which are 
$\left\| \nabla h(x^{N}) \right\| / \left\| x^{0} - x^{\star} \right\|$ in \cite{TV23}, 
$(h(x^N)-h(x^{\star})) / \left\| x^{0}-x^{\star} \right\|^{2}$ in \cite{DT14, TV23} and
$\left\| \nabla h(x^N) \right\|^{2} / (h(x^{0})-h(x^{\star}))$ in \cite{Kim+23}.
Very recently, an exact full-range analysis of GD method for any constant stepsizes $\alpha_k\equiv \alpha \in (0,2)$ was conducted in \cite{RGP24}.
In \cite{TV23}, a dynamic stepsize schedule with $\alpha_k\rightarrow2$ was proposed. This schedule improves the complexity bound of the GD method with $\alpha_k\equiv1$ by a factor of $2$ asymptotically for the measure $(h(x^{N})-h(x^{\star})) / \left\| x^0-x^{\star} \right\|^{2}$.
Similarly, another dynamic schedule was introduced in \cite{RGP24}, which achieved an accelerated rate under the measure $\left\| \nabla h(x^N) \right\|^{2} / (h(x^{0})-h(x^{\star}))$.
In this ``short stepsize'' case, the best possible convergence rate without momentum is $O(1 / N)$, see \cite[Sec. 6.1.1]{DVR23}.

Recent research has delved into “long stepsizes”, where the (normalized) stepsizes $\alpha_k$ can surpass the bound of 2. Although this approach may result in an increase in function value at some iterations, a meticulous design of the stepsizes can potentially enhance the final complexity bound.
Altschuler's master's thesis \cite{Alt18} first identified optimal stepsize schedules for strongly convex problems with $N = 2, 3$.
Das Gupta et al. proposed in \cite{DVR23} the use of a branch-and-bound technique to solve the nonconvex performance estimation problem. This approach, referred to as BnB-PEP in the rest of this paper, can numerically find the optimal stepsize schedules for any $N\geq1$.
However, due to the nonconvexity of the stepsize optimization problem, only optimal schedules for $N\leq50$ were provided as only small-scale problems can be solved by the branch-and-bound approach due to their nonconvex nature.
Grimmer \cite{Gri23} achieved asymptotic convergence rates by cycling through finite “straightforward” schedules, thereby improving the classical convergence rates by a constant factor.

All the works reviewed above maintain the convergence rate of $O(1 / N)$ for the GD method without altering the order.  
However, recent studies \cite{AP23a,AP23b,GSW23, GSW24} have developed stepsize sequences that boost the convergence rate of GD method to a higher order. 
Motivated by the numerically optimal step sizes in \cite{DVR23}, Grimmer et al. in \cite{GSW23} proposed an infinite “straightforward” stepsize schedule, which achieves an $O(1 /N^{1.0564})$ rate in the non-strongly convex setting.
In \cite{AP23a}, Altschuler and Parrilo introduced the \emph{silver stepsize schedule} for the GD method in the $\mu$-strongly convex setting, which is named after its use of the silver ratio $\rho:=1+\sqrt{2}$ in the stepsizes.
In the limit as $\mu$ approaches $0$, an accelerated $O(1 / N^{1.2716})$ rate for the function value residual in the non-strongly convex case was derived in \cite{AP23b}.
Later,  Grimmer et al. \cite{GSW24} proposed a ``right-side heavy'' stepsize schedule based on the silver stepsize schedule, which improves the original result by a constant factor of approximately $2.3244$.
Furthermore, motivated by the H-duality of gradient-based algorithms as described in \cite{Kim+23}, Grimmer et al. in \cite{GSW24} also introduced a ``left-side heavy'' stepsize schedule. This schedule mirrors the “right-side heavy” stepsize schedule and achieves an $O(1 / N^{1.2716})$ rate for the gradient norm.

The above-reviewed stepsize schedules, along with their corresponding convergence rate results and references, are summarized in Table \ref{tab:GD-convergence-compare}.

\begin{sidewaystable}
  \centering
  \begin{tabular}{@{}lllll@{}}
  \toprule
    Measure 
    & Stepsize Schedule $\boldsymbol{\alpha}$
    & $N$ arbitrary?
    & Upper Bound
    & Lower Bound\\ 
  \midrule
    \multirow{5}{*}{$\frac{h(x^N)-h(x^\star)}{\|x^0-x^\star\|^2}$} 
    & $\alpha_k\equiv \alpha \in (0,1]$
    & Yes
    & $\frac{L}{2+4N\alpha}$ \cite{DT14, TV23}
    & \multirow{5}{*}{\makecell[c]{$\frac{L}{2+4\sum_{k=0}^{N-1}\alpha_k}$ \\ 
      \cite[Lem. 3.1]{LG24}}}  \\ 
  \cmidrule(lr){2-4}
    & Teboulle and Vaisbourd's schedule \eqref{dynamic_stepsize} \cite{TV23}
    & Yes
    & $\frac{L}{2+4\sum_{k=0}^{N-1}\alpha_k} \sim \frac{L}{2+8N}$ \cite{TV23}
    & \\ 
  \cmidrule(lr){2-4}
    & \multirow{2}{*}{Silver schedule $\pi^{(m)}$ \eqref{def:silver_step} \cite{AP23b}}
    & \multirow{2}{*}{$N=2^m-1$} & $\frac{L}{1+\sqrt{4\rho^{2m}-3}}\sim \frac{L}{2 N^{\log_2 \rho}}$ \cite{AP23b}
    &  \\
    &
    &
    & $\frac{L}{2+4\sum_{k=0}^{N-1}\alpha_k}=\frac{L}{4\rho^m-2} \sim \frac{L}{4 N^{\log_2 \rho}}$ [\textbf{Ours, Thm. \ref{thm:silver_gd_tight}}]
    &  \\
  \cmidrule(lr){2-4}
    & Right-side heavy schedule $\mathfrak{h}^{(m)}_{\rm right}$ \cite{GSW24}
    & $N=2^m-1$
    & $\frac{L}{2+4\sum_{k=0}^{N-1}\alpha_k}\sim \frac{1}{2(\rho-1)(1+\rho^{-1/2})} \frac{L}{ N^{\log_2 \rho}}$ \cite{GSW24}
    &  \\ 
  \midrule
    \multirow{4}{*}{$\frac{\|\nabla h(x^N)\|^2}{h(x^0)-h(x^\star)}$}
    & $\alpha_k\equiv \alpha \in (0,1]$
    & Yes
    & $\frac{2L}{1+2N\alpha}$ \cite{Kim+23}
    & \multirow{4}{*}{\makecell[c]{$\frac{2L}{1+2\sum_{k=0}^{N-1}\alpha_k}$ 
    \\ \cite[Rmk. 2]{GSW24}}} \\ 
  \cmidrule(lr){2-4}
    & $\alpha_k\equiv \alpha \in (0,2)$
    & Yes
    & $2L \max \left\{ \frac{1}{1+2N\alpha}, (1-\alpha)^{2N} \right\} $ \cite{RGP24}
    & \\ 
  \cmidrule(lr){2-4}
    & \begin{tabular}[c]{@{}ll@{}}
      Teodor, Fran{\c c}ois and \\
      Panagiotis's schedule
      \end{tabular} \cite[Sec. 2.6.2]{RGP24}
    & Yes
    & $\frac{2L}{1+2\sum_{k=0}^{N-1}\alpha_k}\sim\frac{2L}{1+4N}$ \cite{RGP24}
    & \\ 
  \cmidrule(lr){2-4}
    & Left-side heavy schedule $\mathfrak{h}^{(m)}_{\rm left}$ \cite{GSW24}
    & $N=2^m-1$
    & $\frac{2L}{1+2\sum_{k=0}^{N-1}\alpha_k}\sim \frac{2}{(\rho-1)(1+\rho^{-1/2})} \frac{L}{ N^{\log_2 \rho}}$ \cite{GSW24}
    & \\ 
  \midrule
    \multirow{3}{*}{$\frac{\|\nabla h(x^N)\|}{\|x^0-x^\star\|}$}
    & $\alpha_k\equiv \alpha \in (0,1]$
    & Yes
    & $\frac{L}{1+N\alpha}$ \cite{TV23}
    & \multirow{3}{*}{\makecell[c]{$\frac{L}{1+\sum_{k=0}^{N-1}\alpha_k}$ \\ 
      \cite[Lem. 3.1]{LG24}}}  \\ 
  \cmidrule(lr){2-4}
    & Silver schedule $\pi^{(m)}$ \eqref{def:silver_step} \cite{AP23b}
    & $N=2^m-1$ & $\frac{L}{1+\sum_{k=0}^{N-1}\alpha_k}=\frac{L}{\rho^m}\sim \frac{L}{ N^{\log_2 \rho}}$  [\textbf{Ours, Thm. \ref{thm:silver_gd_tight}}]
    & \\
  \cmidrule(lr){2-4}
    & Mixed schedule $[\mathfrak{h}^{(m)}_{\rm right},\mathfrak{h}^{(m)}_{\rm left}]$ \cite{GSW24}
    & $N=2^{m+1}-2$
    & $\frac{L}{1+\sum_{k=0}^{N - 1}\alpha_k}\sim \frac{\rho}{(\rho-1)(1+\rho^{-1/2})} \frac{L}{ N^{\log_2 \rho}}$ \cite{GSW24}
    & \\ 
  \bottomrule
  \end{tabular}
  \caption{{Convergence bound results of GD method with different stepsize schedules under different measures.
  Here, the objective function $h$ is $L$-smooth and convex, and $\rho=1+\sqrt{2}\approx 2.4142$ denotes the silver ratio.
  Our Theorem \ref{thm:silver_gd_tight} establishes the tight convergence rates of GD method with silver stepsize schedule that have been conjectured in \cite{LG24}.
  The tight rate of silver stepsize schedule under the measure $(h(x^N)-h(x^{\star})) / \left\| x^0-x^{\star} \right\|^{2}$ is very close to that of the right-side heavy schedule $\mathfrak{h}_{\rm right}^{(m)}$ as $(\rho-1)(1+\rho^{-1/2})\approx 2.3244$.
  For the measure $\left\| \nabla h(x^N) \right\| / \left\| x^0-x^{\star} \right\|$, the silver stepsize schedule achieves even better rate than the mixed schedule $[\mathfrak{h}_{\rm right}^{(m)},\mathfrak{h}_{\rm left}^{(m)}]$.
  \label{tab:GD-convergence-compare}}}
\end{sidewaystable}

%____________________contribution____________________
\subsection{Contributions}
The contributions of this paper are summarized below.   
\begin{enumerate}[leftmargin=*]%[(i)]
    \item We establish tight complexity bounds for RPPA with the constant relaxation schedule $\alpha_k\equiv\alpha\in(0,\sqrt{2}]$ in terms of  function value residual and subgradient norm, see Theorem \ref{thom-tight-analysis} and Remark \ref{rmk:constant_step_subgdnorm}, respectively.  

    \item We establish a tight complexity bound for RPPA with the dynamic relaxation schedule proposed by Teboulle and Vaisbourd in \cite{TV23}. Once again, we use the function value residual as the optimality measure. The main result is presented in Theorem \ref{RPPA_short_dynamic}, which improves the result in Theorem \ref{thom-tight-analysis} by a constant factor of approximately $\sqrt{2}$ as $N\rightarrow \infty$.
 
    \item The original silver stepsize schedule, denoted by $\pi^{(m)}$, was proposed in \cite{AP23b} for GD method. 
    In this paper, we propose two extensions of it, i.e.,  
    the left and the right silver stepsize schedules, denoted respectively by $\underline{\pi}^{(m)}$ and $\overline{\pi}^{(m)}$.
    For  $\pi^{(m)}$, $\underline{\pi}^{(m)}$ and $\overline{\pi}^{(m)}$, we establish tight complexity bounds based on three different optimality measures $\left\| \nabla f^{\lambda}(x^N) \right\| / \left\| x^{0}-x^{\star} \right\|$ (Theorem \ref{thm:RPPA_silver_tight_basic}), $\left\| \nabla f^\lambda (x^N) \right\|^{2} / (f(x^0)-f(x^{\star}))$ (Theorem \ref{thm:subgdnorm-vs-objval}) and  $(f(z^N)-f(x^{\star})) / \left\| x^0-x^{\star} \right\|^{2}$ (Theorem \ref{thm:objval-vs-ptnorm}), respectively.
    The established bounds show that RPPA with all these silver stepsize schedules converges at the same rate  $O(1 / N^{1.2716})$, which is faster than the classical rate $O(1/N)$ for standard GD method with ``short stepsizes"   $\alpha_k\in (0,2)$.
    Furthermore, the established upper bounds are tight since they match the lower bounds derived in Lemma \ref{RPPA_general_lower_bound}. See Section \ref{sec-acc} for details. 

    \item As a byproduct of our research, we establish in Theorem \ref{thm:silver_gd_tight} tight convergence rate results for the GD method with the silver stepsize schedule $\pi^{(m)}$ under two different optimality measures for solving smooth convex optimization problems. These results provide an affirmative answer to the conjecture in \cite[Conjecture 3.2]{LG24}.
\end{enumerate}

%____________________notation________________________
\subsection{Notation and organization} 
Our notation is standard. Here, we clarify some of it. 
We denote $\mathbb{R}_{++}$ as the set of all positive real numbers; $\mathbb{R}^d$ represents the standard Euclidean space with dimension $d$; $\mathbb{R}^d_{++}$ is the subset of $\mathbb{R}^d$ consisting of all vectors in $\mathbb{R}^d$ with positive components; $\mathbb{R}^{m\times n}$ stands for the set of $m\times n$ matrices; and $\mathbb{N}_+$ is the set of positive integers.
The standard inner product and the induced norm on $\mathbb{R}^d$ are denoted by  $\left<\cdot, \cdot \right>$ and $\| \cdot \|$ respectively.
The \emph{subgradient} of a convex function $f$ at $x\in\mathbb{R}^d$, denoted by $\partial f(x)$, is given by $\partial f(x) = \{ g \in \mathbb{R}^d \mid f(y)\geq f(x) + \left<g,y-x \right> \text{~~for all~~} y \in \mathbb{R}^d\}$. The \emph{effective domain} of $f$ is given by $\mathrm{dom}(f)=\left\{ x \in \mathbb{R}^{d} \mid f(x)<\infty \right\}$.
For any positive sequences $\{a_k\}$ and $\{b_k\}$, we employ the notation $a_k \sim b_k$ to signify that $\lim_{k\to\infty} a_k / b_k = 1$. 
Other notations will be explained as the paper proceeds.

The remainder of this paper is structured as follows. In Section \ref{sec-pre}, we gather preliminary results that will be useful in the subsequent analysis. Specifically, we derive lower complexity bounds of RPPA with any relaxation schedule under different optimality measures. Section \ref{sec_tight} is dedicated to a thorough analysis of RPPA with a constant relaxation schedule $\alpha_k\equiv\alpha \in(0,\sqrt{2}]$. Section \ref{TV23_schedule} deals with Teboulle and Vaisbourd's relaxation schedule as presented in \cite{TV23}. Section \ref{sec-acc} examines the silver stepsize schedule of Altschuler and Parrilo \cite{AP23b} and two extensions thereof. Finally, concluding remarks are presented in Section \ref{sec-conclusion}.

\section{Preliminaries} \label{sec-pre}
%___________________simple facts_____________________
The following identities and inequalities are elementary and easy to verify. They will be employed in our analysis.
For any $x,y,z,w \in \mathbb{R}^d$, $\theta \in \mathbb{R}$ and $\kappa > 0$, there hold
\begin{align}
     2\langle x-y, z-w\rangle & =  \|x-w\|^{2}+\|y-z\|^{2}-\|x-z\|^{2}-\|y-w\|^{2}, \label{id-square} \\
    \|\theta x + (1-\theta)y\|^2 & = \theta \|x\|^2 + (1-\theta) \|y\|^2 - \theta(1-\theta)\|x-y\|^2, \label{id-cvx} \\
    (1+1/\kappa) \|x\|^2+(1+\kappa)\|y\|^2 & \geq \|x+y\|^2   \geq (1-1/\kappa) \|x\|^2+(1-\kappa)\|y\|^2. \label{in-cvx}
\end{align}

Next, we present two lemmas. 
Lemma \ref{in-tv23} is an extension of \cite[Lemma 4]{TV23}, and Lemma \ref{step-tv23} recalls some properties of the stepsize schedule proposed in \cite{TV23}. 
It should be noted that in \cite{TV23}, it is specified that $\gamma \in \{0,1\}$. However, for the conclusion of Lemma \ref{in-tv23} to be valid, it is sufficient to ensure that $\gamma \geq -s$.

\begin{lemma}[Extension of {\cite[Lemma 4]{TV23}}] \label{in-tv23}
For any $x,y,z \in \mathbb{R}^d$, $s>0$, $\gamma \geq -s$ and $v \in \mathbb{R}$.  If $\|x\|^2 \geq \|y\|^2 + s(s+\gamma)\|z\|^2 + sv$ and
 $v \leq 2\langle y,z\rangle -\gamma \|z\|^2$, then there holds  
 $v \leq \|x\|^2/(2s+\gamma)$. 
\end{lemma}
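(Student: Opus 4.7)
The plan is to prove the inequality $(2s+\gamma) v \leq \|x\|^2$, which is equivalent to the desired conclusion since $2s+\gamma \geq s > 0$ (using $\gamma \geq -s$). To do so, I would first combine the two hypotheses by eliminating the inner-product term $\langle y, z\rangle$ via a completion-of-squares argument, and then substitute back into the first hypothesis.

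More concretely, first I would multiply the second hypothesis $v \leq 2\langle y,z\rangle - \gamma \|z\|^2$ by the \emph{nonnegative} factor $s + \gamma \geq 0$, obtaining
\[
(s+\gamma) v \;\leq\; 2(s+\gamma)\langle y,z\rangle - \gamma(s+\gamma)\|z\|^2.
\]
(The assumption $\gamma \geq -s$ is precisely what allows this sign-preserving multiplication; this is the point where the extension relative to \cite{TV23} is used, since nothing in the ensuing algebra requires $\gamma \in \{0,1\}$.) Next, I would apply the trivial bound $0 \leq \|y - (s+\gamma) z\|^2$, expand it, and rearrange to get
\[
2(s+\gamma)\langle y,z\rangle \;\leq\; \|y\|^2 + (s+\gamma)^2 \|z\|^2.
\]
Chaining these two inequalities and simplifying via $(s+\gamma)^2 - \gamma(s+\gamma) = s(s+\gamma)$ yields
\[
(s+\gamma) v \;\leq\; \|y\|^2 + s(s+\gamma)\|z\|^2.
\]

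Finally, I would add $s v$ to both sides and invoke the first hypothesis $\|x\|^2 \geq \|y\|^2 + s(s+\gamma)\|z\|^2 + s v$ to conclude
\[
(2s+\gamma) v \;=\; (s+\gamma) v + s v \;\leq\; \|y\|^2 + s(s+\gamma)\|z\|^2 + s v \;\leq\; \|x\|^2,
\]
and divide by $2s + \gamma > 0$. Since the whole proof reduces to one sign check ($s+\gamma \geq 0$), one completed square, and one substitution, there is no real obstacle; the only subtlety is recognizing that the original restriction $\gamma \in \{0,1\}$ in \cite{TV23} was never essential and can be loosened to the single inequality $\gamma \geq -s$ needed to keep the multiplication step sign-preserving.
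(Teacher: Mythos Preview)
Your proof is correct. The paper itself does not supply a proof of this lemma; it merely states it as an extension of \cite[Lemma~4]{TV23} and remarks that the restriction $\gamma\in\{0,1\}$ there can be relaxed to $\gamma\geq -s$. Your argument—multiplying the second hypothesis by the nonnegative factor $s+\gamma$, absorbing the cross term via $0\le\|y-(s+\gamma)z\|^2$, and then adding $sv$ to invoke the first hypothesis—is exactly the kind of completion-of-squares computation that makes this relaxation transparent, and it cleanly isolates $\gamma\ge -s$ as the only sign condition needed.
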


\begin{lemma}[Collected from {\cite{TV23}}] \label{step-tv23}
For $k\geq 1$, define $\alpha_{k}$ recursively as
\begin{equation} \label{dynamic_stepsize}
\alpha_{k} = \frac{1}{2}\Big(-A_{k-1}+\sqrt{A_{k-1}^2+8(A_{k-1}+1)}\Big)
\text{~~with~~} \alpha_{0} = \sqrt{2} \text{~~and~~} 
A_{k-1} := \sum_{i=0}^{k-1} \alpha_{i}.
\end{equation}
Then, it holds for any $k\geq 0$ that 
\begin{align}
\sqrt{2}\leq \alpha_{k} = \frac{2(1+A_{k})}{2+A_{k}} < 2\text{~~and~~} \alpha_{k}^2-2 = (2-\alpha_{k})A_{k-1}. \label{st}
\end{align}
Furthermore, we also have $A_{k-1} \sim 2k$.  \end{lemma}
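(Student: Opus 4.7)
The plan is to derive everything from the recursion definition by algebraic manipulation, then extract the asymptotics via a standard Cesaro-type argument.

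First, I would square the defining identity $2\alpha_k = -A_{k-1} + \sqrt{A_{k-1}^2 + 8(A_{k-1}+1)}$, which after rearrangement yields the quadratic relation
\begin{equation*}
  \alpha_k^2 + \alpha_k A_{k-1} = 2A_{k-1} + 2,
\end{equation*}
and rewriting this as $\alpha_k^2 - 2 = (2-\alpha_k)A_{k-1}$ is the second identity in \eqref{st}. Using $A_k = A_{k-1} + \alpha_k$, the same quadratic relation rearranges as $\alpha_k(2+A_k) = 2(1+A_k)$, giving the closed form $\alpha_k = 2(1+A_k)/(2+A_k)$. This simultaneously proves both algebraic statements in \eqref{st} in one shot.

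Next, I would show the bounds $\sqrt{2}\le \alpha_k<2$ using the closed form $\alpha_k = 2 - 2/(2+A_k)$. The upper bound is immediate from $A_k>0$. For the lower bound, the inequality $2(1+A_k) \ge \sqrt{2}(2+A_k)$ simplifies to $A_k \ge \sqrt{2}$, and this follows by a trivial induction: $A_0 = \alpha_0 = \sqrt{2}$ by definition, and monotonicity $A_k \ge A_{k-1}$ (since every $\alpha_i>0$) preserves the bound. This also justifies that the expression under the square root in the recursion is positive and that $\alpha_k$ is well-defined for all $k$.

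Finally, for the asymptotics $A_{k-1}\sim 2k$, the bound $\alpha_k\ge\sqrt{2}$ gives $A_k\to\infty$, and then the closed form forces $\alpha_k = 2 - 2/(2+A_k)\to 2$. Applying the Stolz--Ces\`aro theorem to the ratio $A_k/k$ whose successive differences $\alpha_k$ tend to $2$ yields $A_k/k\to 2$, i.e., $A_{k-1}\sim 2k$. There is no real obstacle here — the whole lemma is algebra plus one limit argument — but the one point demanding care is making sure the recursion is genuinely equivalent to the closed form, since this equivalence is what powers both \eqref{st} and the subsequent convergence analysis in Section \ref{TV23_schedule}.
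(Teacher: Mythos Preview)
Your proposal is correct and follows essentially the same route as the paper. The paper's own justification (given in the paragraph immediately after the lemma) cites \cite[Lemma 13]{TV23} for the closed form, notes that the quadratic relation follows directly from the definition, observes that $\alpha_k$ is monotonically increasing toward $2$, and concludes $\sqrt{2}\le\alpha_k<2$ and $A_{k-1}/(2k)\to1$ --- exactly the chain of reasoning you spell out, only you make the squaring step and the Stolz--Ces\`aro argument explicit rather than leaving them implicit or deferred to the reference.
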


In fact, the first equality in \eqref{st} is established by \cite[Lemma 13]{TV23}, while the second equality follows directly from the definition of $\alpha_k$. Moreover, it can be verified that $\alpha_k$ is monotonically increasing and approaches $2$ as $k\to\infty$. Thus, $\sqrt{2}\leq \alpha_k< 2$ and $A_{k-1}/(2k)$ converges to $1$. For simplicity, below we refer to the relaxation schedule defined in \eqref{dynamic_stepsize} as TV's schedule.

Next, we establish some lower bounds that are applicable to Algorithm \ref{rppa} with any relaxation schedule $\boldsymbol{\alpha}$. These bounds serve as limitations and set the stage for determining the best possible upper bounds of RPPA when carefully designed relaxation schedules are employed. 
The proofs of these lower bounds are based on constructing examples that match these lower bounds. Particularly, the examples 
have been recognized in \cite{THG17a, TV23} as worst-case instances for PPA.

\begin{lemma}[Lower bounds for RPPA]\label{RPPA_general_lower_bound}
  Let $N \in \mathbb{N}_+$ be any positive integer, $\{(x^k,z^k)\}_{k=0}^N$ be the sequence generated by Algorithm \ref{rppa} with any positive relaxation schedule $\boldsymbol{\alpha} = (\alpha_0,\alpha_1,\ldots,\alpha_{N-1}) \in \mathbb{R}^N_{++}$ for solving \eqref{eq:merely_cvx_problem}. 
  Then, there hold
  \begin{enumerate}
      \item[(i)] $\left\| \nabla f^{\lambda}(x^N)\right\| = \left\| \frac{1}{\lambda} (x^N-z^N) \right\| \geq \frac{1}{1 + \sum_{i=0}^{N-1} \alpha_i} \frac{1}{\lambda} \left\| x^0 - x^\star \right\|$,
      
      \item[(ii)] $f(z^N) - f(x^\star) \geq  \frac{1}{2 \left( 1+\sum_{i=0}^{N-1} \alpha_i\right)} \frac{1}{2\lambda} \left\| x^0 - x^\star \right\|^2$, and
      
      \item[(iii)] $\left\| \nabla f^{\lambda}(x^N)\right\|^2 = \left\| \frac{1}{\lambda} (x^N-z^N) \right\|^2 \geq \frac{1}{1+\sum_{i=0}^{N-1}\alpha_i} \frac{1}{\lambda} \left( f(x^0) - f(x^\star) \right)$.
  \end{enumerate}
\end{lemma}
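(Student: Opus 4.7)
The plan is to construct a single one-dimensional worst-case instance and then tune one scalar to deliver each of the three bounds in turn; this mirrors the strategy employed for plain PPA in \cite{THG17a, TV23}. Writing $S := 1 + \sum_{i=0}^{N-1}\alpha_i$, I would take $f(x) = c|x|$ on $\mathbb{R}$ with a positive constant $c$ to be chosen, and initialization $x^0 > 0$, so that $x^\star = 0$ and the proximal map is the scalar soft-thresholding operator $\mathrm{Prox}_{\lambda f}(x) = \mathrm{sign}(x)\max(|x|-c\lambda,0)$.

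The crucial first step is to verify that, provided $c$ satisfies $c\lambda S \leq x^0$, the iterates stay in the ``linear regime'' $x^k \geq c\lambda > 0$ for every $k \leq N$, so that $z^k = x^k - c\lambda$ and the relaxation step reduces to $x^{k+1} = x^k - \alpha_k c\lambda$. This is immediate by induction from the telescoping identity $x^k - c\lambda = c\lambda \sum_{i=k}^{N-1}\alpha_i \geq 0$, and it yields the closed forms $x^N - z^N = c\lambda$, $z^N = x^0 - c\lambda S$, and $f(x^0) - f(x^\star) = c x^0$. Since $\nabla f^\lambda(x^N) = (x^N - z^N)/\lambda = c$, the quantities appearing in (i)--(iii) are all explicit functions of the two scalars $c$ and $x^0$.

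Tuning $c$ against $x^0$ then gives each bound. For (i) and (iii), I would take the extreme value $c = x^0/(\lambda S)$, which produces $z^N = 0$ and immediately yields $\|\nabla f^\lambda(x^N)\| = c = \|x^0-x^\star\|/(\lambda S)$, and also $\|\nabla f^\lambda(x^N)\|^2/(f(x^0)-f(x^\star)) = c/x^0 = 1/(\lambda S)$. For (ii), the quantity $f(z^N) - f(x^\star) = c(x^0 - c\lambda S)$ is a concave quadratic in $c$ maximized at the interior point $c = x^0/(2\lambda S)$, which falls within the admissible range and yields $f(z^N)-f(x^\star) = (x^0)^2/(4\lambda S) = \|x^0-x^\star\|^2/(4\lambda S)$, matching the asserted bound (ii).

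I do not expect a substantive obstacle: the subdifferential and proximal map of $f(x) = c|x|$ are fully explicit, and the only invariant that has to be tracked along the iteration is $x^k \geq c\lambda$, which follows from the telescoping computation above. Notably, the construction depends on the schedule $\boldsymbol{\alpha}$ only through its total sum $S$, which is exactly the quantity appearing in the denominators of the claimed lower bounds, confirming that these bounds cannot be improved without further structural assumptions on $f$.
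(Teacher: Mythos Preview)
Your proposal is correct and follows essentially the same construction as the paper, which also takes $f(\cdot)=\eta\|\cdot\|$ (in $\mathbb{R}^d$ with $\|x^0\|=1$) and picks $\eta=1/(\lambda S)$ for (i) and (iii) and $\eta=1/(2\lambda S)$ for (ii). One cosmetic point: your telescoping identity $x^k-c\lambda=c\lambda\sum_{i=k}^{N-1}\alpha_i$ is literally correct only at the extremal choice $c\lambda S=x^0$; for the general case $c\lambda S\le x^0$ you only need the inequality $x^k-c\lambda\ge x^0-c\lambda S\ge 0$, which is what your argument actually uses.
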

\begin{proof}
The proof is accomplished by constructing specific functions that match the lower bounds. 
In all three cases, the constructed functions have the form $f(\cdot) = \eta \| \cdot\|$ for some $\eta>0$.
Note that $f$ is convex,  $\mathcal{X}^\star =\{ 0 \}$, $f(x^\star)=0$ and $\partial f (x) =\left\{ \eta x / \|x\| \right\}$ for any $x\neq 0$. Let $x^0$ satisfy $\left\| x^0 \right\|=1$.
  
To prove (i) and (iii), we let $\eta = \big( \lambda ( 1 + \sum_{i=0}^{N-1}\alpha_i)\big)^{-1}$. 
Using \eqref{alg:R-PPA-compact-form}, \eqref{eq:gd_mor_envlp} and $\|x^0\|=1$, by induction we can show that $x^k = x^0 - \sum_{i=0}^{k-1}  \alpha_i \lambda \eta x^i / \|x^i\|=\big( 1 -  \sum_{i=0}^{N-1}  \alpha_i \lambda \eta \big) x^0 \neq 0$ and $z^k = x^k - \lambda \eta x^k / \left\| x^k \right\| = \big( 1- \lambda \eta -  \sum_{i=0}^{N-1}  \alpha_i \lambda \eta\big) x^0 $ for any $0\le k\leq N$.
Therefore, we have shown that $\nabla f^{\lambda}(x^N) =  \big(\lambda(1+\sum_{i=0}^{N-1}  \alpha_i)\big)^{-1}x^0$, which proves (i). 
Moreover, by noting $\|x^0\|=1$, $f(x^\star)=0$ and $f(x^0) = \eta$, it is easy to verify that $\|\nabla f^{\lambda}(x^N)\|^2 
= \big(\lambda ( 1 + \sum_{i=0}^{N-1}\alpha_i)\big)^{-1} (f(x^0)-f(x^\star))$, which proves (iii).
To prove (ii), we set $\eta = \big(2 \lambda (1 + \sum_{i=0}^{N-1}\alpha_i)\big)^{-1}$. 
In this case, we have $z^N =  x^0/2$, and thus   $f(z^N) = \big(4 \lambda (1 + \sum_{i=0}^{N-1} \alpha_i)\big)^{-1}$, which proves (ii) since $\|x^0-x^\star\|=1$.
\end{proof}

If a complexity upper bound established for RPPA with any relaxation schedule coincides with any of the lower bounds provided in Lemma \ref{RPPA_general_lower_bound}, it is considered to be tight.

%____________________main part_______________________
\section{Tight analysis of RPPA with \texorpdfstring{$\alpha_k\equiv \alpha \in (0,\sqrt{2}]$}{}} \label{sec_tight}

In this section, we establish a tight worst-case complexity bound for RPPA as presented in Algorithm \ref{rppa} with constant relaxation schedule $\alpha_k\equiv \alpha \in (0,\sqrt{2}]$. 
Drawing inspiration from \cite{TV23}, we identify the key monotonically decreasing terms within the analysis that are crucial for our analysis and introduce a double sufficient decrease inequality to simplify the proof.
We now present a useful lemma, which is applicable beyond the case of constant relaxation schedule.

\begin{lemma} \label{lem-basic}
Let $x^{\star} \in \mathcal{X}^{\star}$ be arbitrarily fixed and 
$\{(z^k,x^k)\}_{k\geq 0}$ be the sequence generated by Algorithm \ref{rppa} for solving \eqref{eq:merely_cvx_problem}. Then,  for $k\geq 0$, we have
\begin{align}
&f(x^{\star})  \geq f(z^{k}) + \frac{1}{2\lambda \alpha_k}\|x^{k+1}-x^{\star}\|^2-\frac{1}{2\lambda \alpha_k}\|x^{k}-x^{\star}\|^2 + \frac{2-\alpha_k}{2\lambda}\|z^{k}-x^{k}\|^2, \label{lem-basic-1}\\
&f(z^{k})  \geq f(z^{k+1}) +\frac{1}{2\lambda}\left(\|z^{k+1}-x^{k+1}\|^2+\|z^{k+1}-z^{k}\|^2-(1-\alpha_k)^2\|z^{k}-x^{k}\|^2 \right), \label{lem-basic-2} \\
& \frac{\alpha_{k}-2}{2(1-\alpha_k)}  \|z^{k+1}-z^{k}\|^2 + \frac{\alpha_{k}}{2(1-\alpha_{k})}\|z^{k+1}-x^{k+1}\|^2+\frac{\alpha_{k}(\alpha_{k}-1)}{2}\|z^{k}-x^{k}\|^2 \nonumber\\
& \quad \quad =
\langle (x^{k+1}\!-z^{k+1})-(x^{k}-z^{k}), z^{k+1}-z^{k}\rangle \geq 0. \quad\quad (\alpha_k \neq 1) \label{lem-basic-3}
\end{align}
\end{lemma}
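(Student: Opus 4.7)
The plan is to treat the three assertions separately, since each follows from a different structural fact: (\ref{lem-basic-1}) is a one-step bound relating $z^k$ to $x^\star$, (\ref{lem-basic-2}) is a descent from $f(z^k)$ to $f(z^{k+1})$, and (\ref{lem-basic-3}) encodes monotonicity of the Moreau-envelope gradient plus an algebraic identity valid when $\alpha_k\neq 1$. The common engine is \eqref{eq:gd_mor_envlp}: the two subgradients $(x^k-z^k)/\lambda\in\partial f(z^k)$ and $(x^{k+1}-z^{k+1})/\lambda\in\partial f(z^{k+1})$, combined with the relation $x^{k+1}-x^k=\alpha_k(z^k-x^k)$, which in particular gives the useful identity $x^{k+1}-z^k=(1-\alpha_k)(x^k-z^k)$.

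For \eqref{lem-basic-1}, I would start from the subgradient inequality $f(x^\star)\geq f(z^k)+\tfrac{1}{\lambda}\langle x^k-z^k,\,x^\star-z^k\rangle$. Rewriting $x^k-z^k=-(x^{k+1}-x^k)/\alpha_k$ and applying \eqref{id-square} with the quadruple $(x,y,z,w)=(x^{k+1},x^k,x^\star,z^k)$ yields
\[
2\langle x^{k+1}-x^k,\,x^\star-z^k\rangle=\|x^{k+1}-z^k\|^2+\|x^k-x^\star\|^2-\|x^{k+1}-x^\star\|^2-\|x^k-z^k\|^2.
\]
Substituting $\|x^{k+1}-z^k\|^2=(1-\alpha_k)^2\|x^k-z^k\|^2$, dividing by $-2\alpha_k$, and scaling by $1/\lambda$ produces exactly \eqref{lem-basic-1}.

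For \eqref{lem-basic-2}, I would apply the subgradient inequality at $z^{k+1}$: $f(z^k)\geq f(z^{k+1})+\tfrac{1}{\lambda}\langle x^{k+1}-z^{k+1},\,z^k-z^{k+1}\rangle$. Using \eqref{id-square} with $(x,y,z,w)=(x^{k+1},z^{k+1},z^k,z^{k+1})$ gives
\[
2\langle x^{k+1}-z^{k+1},\,z^k-z^{k+1}\rangle=\|x^{k+1}-z^{k+1}\|^2+\|z^{k+1}-z^k\|^2-\|x^{k+1}-z^k\|^2,
\]
and the last term equals $(1-\alpha_k)^2\|x^k-z^k\|^2$ as before, yielding \eqref{lem-basic-2}.

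For \eqref{lem-basic-3}, the nonnegativity is immediate: both $(x^k-z^k)/\lambda$ and $(x^{k+1}-z^{k+1})/\lambda$ are subgradients of the convex function $f$ at $z^k$ and $z^{k+1}$, so monotonicity of $\partial f$ gives $\langle(x^{k+1}-z^{k+1})-(x^k-z^k),\,z^{k+1}-z^k\rangle\geq 0$. The main obstacle is the algebraic identity, which I would verify as follows. Using $x^{k+1}-x^k=\alpha_k(z^k-x^k)$, rewrite
\[
(x^{k+1}-z^{k+1})-(x^k-z^k)=\alpha_k(z^k-x^k)-(z^{k+1}-z^k).
\]
Inner-producting with $z^{k+1}-z^k$ splits off a clean $-\|z^{k+1}-z^k\|^2$ plus the term $\alpha_k\langle z^k-x^k,\,z^{k+1}-z^k\rangle$. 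Here I would use $\alpha_k\neq 1$ to write $z^k-x^k=(z^k-x^{k+1})/(1-\alpha_k)$, and then apply the polarization identity $2\langle u,v\rangle=\|u+v\|^2-\|u\|^2-\|v\|^2$ with $u=z^k-x^{k+1}$, $v=z^{k+1}-z^k$ so that $u+v=z^{k+1}-x^{k+1}$. After collecting coefficients and converting $\|z^k-x^{k+1}\|^2=(1-\alpha_k)^2\|z^k-x^k\|^2$, the three quadratic terms with coefficients $\tfrac{\alpha_k-2}{2(1-\alpha_k)}$, $\tfrac{\alpha_k}{2(1-\alpha_k)}$, and $\tfrac{\alpha_k(\alpha_k-1)}{2}$ emerge exactly as stated. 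The only delicate point is keeping the sign of $(1-\alpha_k)$ straight while dividing, so the case $\alpha_k\neq 1$ is necessary precisely to make the two fractions well-defined.
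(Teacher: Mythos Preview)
Your proposal is correct and follows essentially the same approach as the paper: for \eqref{lem-basic-1} and \eqref{lem-basic-2} both you and the paper start from the appropriate subgradient inequality and expand the inner product via \eqref{id-square}, and for \eqref{lem-basic-3} both combine monotonicity of $\partial f$ with an algebraic rewriting using $x^{k+1}-z^k=(1-\alpha_k)(x^k-z^k)$. The only cosmetic difference is in \eqref{lem-basic-1}, where the paper first applies \eqref{id-square} to get $\|z^k-x^\star\|^2$ and then expands that term via \eqref{id-cvx}, whereas you first rewrite $x^k-z^k=-(x^{k+1}-x^k)/\alpha_k$ and apply \eqref{id-square} directly; the two routes are algebraically equivalent.
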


\begin{proof}
For any $k\geq 0$, from $z^{k} = \mathrm{Prox}_{\lambda f}(x^{k})$, we obtain $0 \in \partial f(z^{k}) + \frac{z^{k}-x^{k}}{\lambda}$, and thus
\begin{equation} \label{lem-1-1}
f(x) \geq f(z^{k}) + \frac{1}{\lambda} \langle x^{k}-z^{k}, x-z^{k} \rangle, \quad \forall x\in\mathbb{R}^d.
\end{equation}
Substituting  $x=x^{\star}$ in \eqref{lem-1-1}, we get
\begin{equation} \label{lem-1-2}
\begin{aligned}
f(x^{\star})   \hspace{1.2pt} \geq \hspace{1.2pt} f(z^{k}) + \frac{1}{\lambda} \langle x^{k}-z^{k}, x^{\star}-z^{k} \rangle 
 \stackrel{\eqref{id-square}} = f(z^{k}) + \frac{1}{2\lambda}\left(\|z^{k}-x^{k}\|^2+\|z^{k}-x^{\star}\|^2-\|x^{k}-x^{\star}\|^2 \right).
\end{aligned}
\end{equation}
Since $x^{k+1} = \alpha_k z^{k} + (1-\alpha_k) x^{k}$, using \eqref{id-cvx} and the fact that $x^{k+1}-x^{k}=\alpha_k(z^{k}-x^{k})$, we deduce
\[
\|z^{k}-x^{\star}\|^2 = \frac{1}{\alpha_k}\|x^{k+1}-x^{\star}\|^2 + \frac{\alpha_k-1}{\alpha_k}\|x^{k}-x^{\star}\|^2+ (1-\alpha_k)\|z^{k}-x^{k}\|^2.
\]
Substituting this into \eqref{lem-1-2} directly gives \eqref{lem-basic-1}. Setting $x=z^{k-1}$ in \eqref{lem-1-1} and utilizing $z^{k-1}-x^{k}= (1-\alpha_{k-1})(z^{k-1}-x^{k-1})$, we obtain
\begin{align*}
f(z^{k\!-1}) & \geq f(z^{k}) + \frac{1}{\lambda}\langle x^{k}\!-z^{k}, z^{k\!-1}\!-z^{k}\rangle \stackrel{\eqref{id-square}}= f(z^{k}) + \frac{1}{2\lambda}\left(\|z^{k}-x^{k}\|^2\!+\|z^{k}-z^{k\!-1}\|^2\!-\|x^{k}-z^{k\!-1}\|^2 \right)\\
& = f(z^{k}) + \frac{1}{2\lambda}\left(\|z^{k}-x^{k}\|^2+\|z^{k}-z^{k-1}\|^2-(1-\alpha_{k-1})^2\|z^{k-1}-x^{k-1}\|^2 \right), 
\end{align*}
which verifies \eqref{lem-basic-2}. 
The equality in \eqref{lem-basic-3} follows by
\begin{align*}
\langle (x^{k+1}-z^{k+1})&-(x^{k}-z^{k}), z^{k+1}-z^{k}\rangle = \langle x^{k+1}-z^{k+1}, z^{k+1}-z^{k}\rangle - \langle x^{k}-z^{k}, z^{k+1}-z^{k}  \rangle\\
& \stackrel{(\mathrm{Alg} \ref{alg:relax-step})} =  \langle x^{k+1}-x^{k+1}, z^{k+1}-z^{k}\rangle + \frac{1}{\alpha_{k}-1}\langle x^{k+1}-z^{k},z^{k+1}-z^{k}\rangle \\
& \hspace{6pt} \stackrel{\eqref{id-square}} = \hspace{6pt} \frac{\alpha_{k}-2}{2(1-\alpha_{k})}\|z^{k+1}-z^{k}\|^2 + \frac{\alpha_{k}}{2(1-\alpha_{k})}\|z^{k+1}-x^{k+1}\|^2+\frac{\alpha_{k}}{2(\alpha_{k}-1)}\|z^{k}-x^{k+1}\|^2\\
& \stackrel{(\mathrm{Alg} \ref{alg:relax-step})} = \frac{\alpha_{k}-2}{2(1-\alpha_{k})}\|z^{k+1}-z^{k}\|^2 + \frac{\alpha_{k}}{2(1-\alpha_{k})}\|z^{k+1}-x^{k+1}\|^2+\frac{\alpha_{k}(\alpha_{k}-1)}{2}\|z^{k}-x^{k}\|^2.
\end{align*}
Recall that $\frac{x^{k}-z^{k}}{\lambda}\in \partial f(z^{k})$. Leveraging the monotonicity property of the proximal operator, we can deduce that $\langle (x^{k+1}\!-\!z^{k+1})-\!(x^{k}\!-\!z^{k}), z^{k+1}\!-\!z^{k}\rangle \geq 0$, thereby confirming the assertion of \eqref{lem-basic-3}.
\end{proof}

Next, we prove the monotonicity of $\|z^{k}-x^{k}\|$, which shows that the norm of a specifically selected subgradient element decreases monotonically as $k$ increases.

\begin{lemma}[Monotonicity of $\|z^{k}-x^{k}\|$]
Let $\{(z^k,x^k)\}_{k\geq 0}$ be the sequence generated by Algorithm \ref{rppa} for solving \eqref{eq:merely_cvx_problem} with $\alpha_k\equiv \alpha \in (0,2]$. Then, for any $k\geq 0$, it holds that
\begin{equation} \label{mono-sub}
\|z^{k+1}-x^{k+1}\| \leq \|z^{k}-x^{k}\|.    
\end{equation}
\end{lemma}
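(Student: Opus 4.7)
The plan is to reduce the claim to the well-known fact that gradient norms are non-increasing along gradient-descent iterates on a convex smooth function, exploiting the identity $(x^k - z^k)/\lambda = \nabla f^\lambda(x^k)$ from \eqref{eq:gd_mor_envlp} together with the compact form \eqref{alg:R-PPA-compact-form} of RPPA. Concretely, setting $u := \nabla f^\lambda(x^k)$ and $v := \nabla f^\lambda(x^{k+1})$, the desired inequality \eqref{mono-sub} is equivalent to $\lambda\|v\| \le \lambda\|u\|$, and the RPPA recursion reads $x^{k+1} = x^k - \alpha\lambda\, u$.

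The key ingredient is the monotonicity of the subdifferential of $f$, or equivalently the $\lambda$-cocoercivity of $\nabla f^\lambda$ (which follows from convexity and $(1/\lambda)$-smoothness of $f^\lambda$). Since $(x^k-z^k)/\lambda = u \in \partial f(z^k)$ and $(x^{k+1}-z^{k+1})/\lambda = v \in \partial f(z^{k+1})$, monotonicity of $\partial f$ yields $\langle u - v,\, z^k - z^{k+1}\rangle \ge 0$. Writing $z^k - z^{k+1} = (z^k - x^k) + (x^k - x^{k+1}) + (x^{k+1} - z^{k+1}) = (\alpha-1)\lambda u + \lambda v$ and substituting, I obtain
\[
(\alpha-1)\|u\|^2 + (2-\alpha)\langle u, v\rangle \ \ge\ \|v\|^2.
\]

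From here the proof is pure algebra: apply the polarization identity $\langle u,v\rangle = \tfrac12(\|u\|^2 + \|v\|^2 - \|u-v\|^2)$ to the middle term, and collect like terms to arrive at
\[
\tfrac{\alpha}{2}\bigl(\|u\|^2 - \|v\|^2\bigr) \ \ge\ \tfrac{2-\alpha}{2}\,\|u-v\|^2.
\]
Since $\alpha \in (0,2]$, both factors $\alpha$ and $2-\alpha$ are nonnegative, so the right-hand side is $\ge 0$, giving $\|v\|^2 \le \|u\|^2$. Multiplying by $\lambda^2$ translates this into the claim $\|z^{k+1}-x^{k+1}\| \le \|z^k - x^k\|$.

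There is no serious obstacle; the only subtlety is the choice of proof route. One could alternatively try to use \eqref{lem-basic-3} directly, but a quick check shows that multiplying \eqref{lem-basic-3} by the appropriate sign constant only yields a useful upper bound on $\|z^{k+1}-x^{k+1}\|^2$ in the over-relaxed regime $\alpha\in(1,2]$ (where one additionally combines with the nonexpansiveness bound $\|z^{k+1}-z^k\|\le \|x^{k+1}-x^k\|=\alpha\|z^k-x^k\|$ to get $\alpha(2-\alpha)+(\alpha-1)^2 = 1$), and gives the wrong direction when $\alpha\in(0,1)$. The subgradient-monotonicity route above handles the whole range $(0,2]$ uniformly, which is why I would adopt it.
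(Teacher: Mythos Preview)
Your proof is correct and takes a genuinely different (and more economical) route than the paper. You exploit the monotonicity inequality $\langle u-v,\,z^k-z^{k+1}\rangle\ge 0$ together with the RPPA update to obtain $\tfrac{\alpha}{2}(\|u\|^2-\|v\|^2)\ge \tfrac{2-\alpha}{2}\|u-v\|^2$, which settles all $\alpha\in(0,2]$ in one stroke. The paper instead splits into three cases: for $\alpha\in(0,1)$ and $\alpha\in(1,2]$ it derives auxiliary bounds \eqref{mono-case1-1}--\eqref{mono-case1-2} from the elementary inequality \eqref{in-cvx}, then combines them with the identity \eqref{lem-basic-3} via a tailored algebraic rewrite of $\alpha\|z^k-x^k\|^2-\alpha\|z^{k+1}-x^{k+1}\|^2$; the case $\alpha=1$ is treated separately. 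Your argument is shorter and avoids the case distinction altogether; the paper's route, while more laborious, stays within the same pool of ingredients (\eqref{lem-basic-2}, \eqref{lem-basic-3}, \eqref{in-cvx}) that are reused in the surrounding ``double sufficient decrease'' analysis. Incidentally, your side remark about using \eqref{lem-basic-3} directly plus nonexpansiveness is a slightly different combination than what the paper actually does (it pairs \eqref{lem-basic-3} with \eqref{in-cvx}, not with $\|z^{k+1}-z^k\|\le\|x^{k+1}-x^k\|$), but your diagnosis that the sign structure of \eqref{lem-basic-3} alone is unhelpful for $\alpha\in(0,1)$ is accurate.
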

\begin{proof}
We consider three cases: (i) $\alpha \in (0,1)$, (ii) $\alpha \in (1,2]$, and (iii) $\alpha = 1$.
\begin{enumerate}
    \item[(i)] For $\alpha \in (0,1)$, applying \eqref{in-cvx} with $x=z^{k+1}-x^{k+1}$, $y=x^{k+1}-z^{k}$ and $\kappa=\frac{1}{1-\alpha}>0$, we obtain 
    \begin{equation}
    \begin{aligned} \label{mono-case1-1}
    \|z^{k+1}-z^{k}\|^2 & \hspace{7.35pt} \geq \hspace{7.35pt} \alpha \|z^{k+1}-x^{k+1}\|^2 - \frac{\alpha}{1-\alpha}\|x^{k+1}-z^{k}\|^2  \\
    & \stackrel{(\mathrm{Alg} \ref{alg:relax-step})} = \alpha\|z^{k+1}-x^{k+1}\|^2-\alpha(1-\alpha)\|z^{k}-x^{k}\|^2.
    \end{aligned}
    \end{equation}
    \item[(ii)] For $\alpha \in (1,2]$, similarly, applying \eqref{in-cvx} with $x=z^{k+1}\!-x^{k+1}$, $y=x^{k+1}\!-z^{k}$ and $\kappa\!=\frac{1}{\alpha-1}>0$ gives
    \begin{equation} 
    \begin{aligned}  \label{mono-case1-2}
    \|z^{k+1}-z^{k}\|^2 & \hspace{7.35pt} \leq \hspace{7.35pt}\alpha \|z^{k+1}-x^{k+1}\|^2 + \frac{\alpha}{\alpha-1}\|x^{k+1}-z^{k}\|^2  \\
    & \stackrel{(\mathrm{Alg} \ref{alg:relax-step})} = \alpha\|z^{k+1}-x^{k+1}\|^2 + \alpha(\alpha-1)\|z^{k}-x^{k}\|^2.
    \end{aligned}
    \end{equation}
\end{enumerate}
For both cases (i) and (ii), we derive
\begin{align*}
    & \quad  \alpha \|z^{k}-x^{k}\|^2- \alpha \|z^{k+1}-x^{k+1}\|^2 \\
    & \hspace{16.8pt} = \hspace{16.8pt} \frac{\alpha-2}{1\!-\!\alpha}\left(\alpha \|z^{k+1}\!-x^{k+1}\|^2\!-\!\alpha(1\!-\!\alpha)\|z^{k}-x^{k}\|^2\right) + \frac{\alpha}{1\!-\!\alpha}\|z^{k+1}\!-x^{k+1}\|^2+ \alpha(\alpha\!-\!1) \|z^{k}-x^{k}\|^2 \\
    & \stackrel{\eqref{mono-case1-1} \text{ or }\eqref{mono-case1-2}}\geq \frac{\alpha-2}{1\!-\!\alpha}\|x^{k+1}-x^{k}\|^2 + \frac{\alpha}{1\!-\!\alpha}\|z^{k+1}-x^{k+1}\|^2+ \alpha(\alpha\!-\!1) \|z^{k}-x^{k}\|^2 \stackrel{\eqref{lem-basic-3}}\geq 0,
\end{align*}
where the ``$=$" can be verified straightforwardly. This
confirms \eqref{mono-sub} for $\alpha \in (0,1) \cup (1,2]$. 
\begin{enumerate}
    \item[(iii)] For $\alpha = 1$, we have $x^{k+1}=z^{k}$. Using the monotonicity of the proximal operator, we derive
    \begin{align*}
    0 & \leq \langle (z^{k}-z^{k+1})-(z^{k-1}-z^{k}), z^{k+1}-z^{k}\rangle = \langle (2z^{k}-z^{k-1})-z^{k+1}, z^{k+1}-z^{k}\rangle \\
    & \leq \|z^{k}-z^{k-1}\|^2 - \|z^{k+1}-z^{k}\|^2
    = \|z^{k}-x^{k}\|^2 - \|z^{k+1}-x^{k+1}\|^2.
    \end{align*}
\end{enumerate}
In summary, we have shown that \eqref{mono-sub} holds for all $k\geq 0$. 
\end{proof}

\begin{lemma}[Double sufficient decrease] \label{thom-2SD}
Let $\{(z^k,x^k)\}_{k\geq 0}$ be the sequence generated by Algorithm \ref{rppa} for solving \eqref{eq:merely_cvx_problem} with  $\alpha_k\equiv \alpha \in (0,2)$. Then, for any $k\geq 0$, there hold
\begin{enumerate}
    \item  $f(z^{k}) \geq f(z^{k+1}) + \frac{\alpha+1}{2\lambda}\|z^{k+1}-x^{k+1}\|^2 + \frac{\alpha-1}{2\lambda}\|z^{k}-x^{k}\|^2$ if $\alpha \in (0,1]$, and \dotfill $(\mathrm{2SD\!-\!1})$
    \item $f(z^{k}) \geq f(z^{k+1}) + \!\frac{1}{2\lambda}\frac{2}{2-\alpha}\|z^{k+1}\!-\!x^{k+1}\|^2 \!+\! \frac{1}{2\lambda}\frac{2(\alpha-1)^2}{\alpha-2}\|z^{k}-x^{k}\|^2$ if $\alpha \in (1,2)$. \dotfill $(\mathrm{2SD\!-\!2})$
\end{enumerate}
\end{lemma}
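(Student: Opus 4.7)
The plan is to derive both inequalities from the sufficient-decrease estimate \eqref{lem-basic-2} of Lemma~\ref{lem-basic}, namely
\[
f(z^{k}) \geq f(z^{k+1}) +\frac{1}{2\lambda}\bigl(\|z^{k+1}-x^{k+1}\|^2+\|z^{k+1}-z^{k}\|^2-(1-\alpha)^2\|z^{k}-x^{k}\|^2\bigr).
\]
The cross term $\|z^{k+1}-z^{k}\|^2$ enters with a positive coefficient, so in each regime I will lower-bound it by a linear combination of $\|z^{k+1}-x^{k+1}\|^2$ and $\|z^{k}-x^{k}\|^2$ and then substitute. The two regimes $\alpha\in(0,1]$ and $\alpha\in(1,2)$ force different tools because the elementary inequality \eqref{in-cvx} supplies a lower bound in one direction and an upper bound in the other.

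For part~(i) with $\alpha\in(0,1)$, I apply \eqref{in-cvx} with $x=z^{k+1}-x^{k+1}$, $y=x^{k+1}-z^{k}$ and $\kappa=1/(1-\alpha)>0$, and then simplify the $\|y\|^2$-term using the identity $x^{k+1}-z^{k}=-(1-\alpha)(z^{k}-x^{k})$ from the relaxation step. This is exactly the bound \eqref{mono-case1-1} already recorded in the monotonicity proof, giving
\[
\|z^{k+1}-z^{k}\|^2 \geq \alpha\|z^{k+1}-x^{k+1}\|^2-\alpha(1-\alpha)\|z^{k}-x^{k}\|^2.
\]
Plugging this into the display above collects $\|z^{k+1}-x^{k+1}\|^2$ with coefficient $1+\alpha$ and $\|z^{k}-x^{k}\|^2$ with coefficient $-\alpha(1-\alpha)-(1-\alpha)^2=-(1-\alpha)=\alpha-1$, which is $(\text{2SD-1})$. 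The boundary case $\alpha=1$ is immediate: $x^{k+1}=z^{k}$ forces $\|z^{k+1}-z^{k}\|^2=\|z^{k+1}-x^{k+1}\|^2$ and $(1-\alpha)^2=0$, so \eqref{lem-basic-2} directly yields the desired inequality with $\alpha-1=0$.

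For part~(ii) with $\alpha\in(1,2)$, the analogous application of \eqref{in-cvx} (now with $\kappa=1/(\alpha-1)$, as in \eqref{mono-case1-2}) yields only an upper bound on $\|z^{k+1}-z^{k}\|^2$, which is the wrong direction. Instead I invoke identity/inequality \eqref{lem-basic-3}: for $\alpha\in(1,2)$ both $\alpha-2$ and $1-\alpha$ are negative, so the coefficient $\frac{\alpha-2}{2(1-\alpha)}$ is positive, and I can solve \eqref{lem-basic-3} for $\|z^{k+1}-z^{k}\|^2$ to obtain
\[
\|z^{k+1}-z^{k}\|^2 \geq \frac{\alpha}{2-\alpha}\|z^{k+1}-x^{k+1}\|^2+\frac{\alpha(\alpha-1)^2}{\alpha-2}\|z^{k}-x^{k}\|^2.
\]
Substituting into \eqref{lem-basic-2}, the coefficient of $\|z^{k+1}-x^{k+1}\|^2$ becomes $1+\tfrac{\alpha}{2-\alpha}=\tfrac{2}{2-\alpha}$, and the coefficient of $\|z^{k}-x^{k}\|^2$ becomes $\frac{\alpha(\alpha-1)^2}{\alpha-2}-(\alpha-1)^2=\frac{(\alpha-1)^2(\alpha-(\alpha-2))}{\alpha-2}=\frac{2(\alpha-1)^2}{\alpha-2}$, which is exactly $(\text{2SD-2})$.

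The only mildly delicate step is the sign bookkeeping in part~(ii): one must verify that $\tfrac{\alpha-2}{2(1-\alpha)}>0$ on $(1,2)$ before dividing through in \eqref{lem-basic-3}, and one must keep track of the negative sign of $\tfrac{2(\alpha-1)^2}{\alpha-2}$ that reflects the ``borrowing'' from $\|z^{k}-x^{k}\|^2$ characteristic of long stepsizes. Apart from that, both parts are purely algebraic consequences of Lemma~\ref{lem-basic}, so no further convex-analytic input beyond the proximal subgradient relation and the monotonicity of $\mathrm{Prox}_{\lambda f}$ already used there is needed.
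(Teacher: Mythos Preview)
Your proof is correct and follows essentially the same approach as the paper: for $\alpha\in(0,1)$ you combine \eqref{lem-basic-2} with the lower bound \eqref{mono-case1-1}, handle $\alpha=1$ directly via $x^{k+1}=z^k$, and for $\alpha\in(1,2)$ you rearrange \eqref{lem-basic-3} (using the positivity of $\tfrac{\alpha-2}{2(1-\alpha)}$) to extract the needed lower bound on $\|z^{k+1}-z^k\|^2$ and substitute into \eqref{lem-basic-2}. The algebra and sign checks you record are exactly those the paper carries out.
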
 
\begin{proof}
We first address the case $\alpha \in (0,1]$, the proof of which is divided into two subcases: $\alpha \in (0,1)$ and $\alpha=1$. 
For $\alpha \in (0,1)$, combining \eqref{lem-basic-2} with \eqref{mono-case1-1} directly yields (2SD-1). 
For $\alpha = 1$, we have $x^{k+1}=z^{k}$. Thus,  \eqref{lem-basic-2} reduces to 
$ f(z^{k}) \geq f(z^{k+1}) +\frac{1}{\lambda} \|z^{k+1}-x^{k+1}\|^2$, which is exactly (2SD-1). 

Now, consider the case $\alpha \in (1,2)$. It follows from \eqref{lem-basic-3} that
\[
\|z^{k+1}-z^{k}\|^2\geq \frac{\alpha}{2-\alpha}\|z^{k+1}-x^{k+1}\|^2 + \frac{\alpha(\alpha-1)^2}{\alpha-2}\|z^{k}-x^{k}\|^2.
\]
Combining the above inequality with \eqref{lem-basic-2} directly demonstrates (2SD-2). 
\end{proof}

Now, we are prepared to establish the tight complexity bound for RPPA 
with $\alpha_k\equiv\alpha\in(0,\sqrt{2}]$ by using the function value residual as the optimality measure. 

\begin{thom}[Tight complexity bound for RPPA with {$\alpha_k\equiv\alpha\in(0,\sqrt{2}]$}] \label{thom-tight-analysis}
Let $x^{\star} \in \mathcal{X}^{\star}$  and $N\in \mathbb{N}_+$ be arbitrarily fixed. 
Let $z^{N}$ be the output after $N$ iterations of Algorithm \ref{rppa} for solving \eqref{eq:merely_cvx_problem} with constant relaxation schedule $\alpha_k\equiv \alpha \in (0,\sqrt{2}]$. Then, it holds that 
\begin{equation}\label{thm3.1-key}
f(z^{N}) - f(x^{\star}) \leq \frac{1}{\alpha N+1}\frac{1}{4\lambda}\|x^0-x^{\star}\|^2.  
\end{equation}
Furthermore, this bound is tight.
\end{thom}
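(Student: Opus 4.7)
The plan is to reduce the claim to Lemma~\ref{in-tv23} with the specific choice $\|x\|^2 = \|x^0 - x^\star\|^2$, $y = z^N - x^\star$, $z = x^N - z^N$, $v = 2\lambda(f(z^N) - f(x^\star))$, $s = \alpha N + 1$, and $\gamma = 0$. Under this substitution the lemma's conclusion $v \leq \|x\|^2/(2s + \gamma)$ is precisely the desired bound \eqref{thm3.1-key}. Hypothesis~(ii) of the lemma, $v \leq 2\langle y, z\rangle - \gamma\|z\|^2$, reduces to $2\lambda(f(z^N) - f(x^\star)) \leq 2\langle z^N - x^\star, x^N - z^N\rangle$, which is immediate from the subgradient inequality at $z^N = \mathrm{Prox}_{\lambda f}(x^N)$ since $(x^N - z^N)/\lambda \in \partial f(z^N)$.

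The core of the proof is verifying hypothesis~(i), i.e.,
\[
\|x^0 - x^\star\|^2 \;\geq\; \|z^N - x^\star\|^2 + (\alpha N + 1)^2\|z^N - x^N\|^2 + 2\lambda(\alpha N + 1)\bigl(f(z^N) - f(x^\star)\bigr).
\]
First I will sum \eqref{lem-basic-1} over $k = 0, \ldots, N-1$ to telescope the $\|x^k - x^\star\|^2$ differences, yielding a lower bound on $\|x^0 - x^\star\|^2 - \|x^N - x^\star\|^2$ in terms of $2\lambda\alpha\sum_{k}(f(z^k) - f(x^\star))$ and $\alpha(2-\alpha)\sum_{k}\|z^k - x^k\|^2$. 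Next, I will telescope the appropriate double-sufficient-decrease inequality from Lemma~\ref{thom-2SD} (2SD-1 for $\alpha \in (0,1]$ and 2SD-2 for $\alpha \in (1,\sqrt{2}]$) to lower-bound each $f(z^j) - f(z^N)$ by a combination of $\|z^N - x^N\|^2$, $\|z^j - x^j\|^2$, and the intermediate residuals, and then sum over $j$. Combined with the one-step subgradient-based identity $\|x^N - x^\star\|^2 \geq \|z^N - x^\star\|^2 + \|z^N - x^N\|^2 + 2\lambda(f(z^N) - f(x^\star))$, this expresses $\|x^0 - x^\star\|^2$ as at least $\|z^N - x^\star\|^2 + 2\lambda(\alpha N + 1)(f(z^N) - f(x^\star))$ plus a nonnegative combination of the $\|z^k - x^k\|^2$'s. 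To match the target coefficient $(\alpha N + 1)^2 = 1 + 2\alpha N + \alpha^2 N^2$ on $\|z^N - x^N\|^2$, I apply the monotonicity \eqref{mono-sub} of $\{\|z^k - x^k\|\}$ to substitute $\sum_{k=0}^{N-1}\|z^k - x^k\|^2 \geq N\|z^N - x^N\|^2$ and $\sum_{k=1}^{N-1} k\|z^k - x^k\|^2 \geq \tfrac{N(N-1)}{2}\|z^N - x^N\|^2$. A direct computation shows that the aggregate coefficient of $\|z^N - x^N\|^2$ then collapses to exactly $(\alpha N + 1)^2$ in both cases; in the case $\alpha > 1$ the simplification relies on the factorization $4 - \alpha^2 = (2-\alpha)(2+\alpha)$ to cancel the $(2-\alpha)$ denominator arising from 2SD-2.

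Tightness follows immediately from Lemma~\ref{RPPA_general_lower_bound}~(ii) with $\sum_{i=0}^{N-1}\alpha_i = N\alpha$, which produces the matching worst-case lower bound $\frac{1}{4\lambda(1 + N\alpha)}\|x^0 - x^\star\|^2$ via the one-dimensional example $f(x) = \eta\|x\|$. The main obstacle is the bookkeeping for $\alpha \in (1, \sqrt{2}]$: 2SD-2 contributes a ``wrong-signed'' term $\tfrac{2(\alpha-1)^2}{\alpha - 2}\|z^k - x^k\|^2$, and one must verify that after combining it with the $\alpha(2-\alpha)\|z^k - x^k\|^2$ coefficient from \eqref{lem-basic-1}, the net coefficient on $\sum_{k}\|z^k - x^k\|^2$ reduces to $\alpha(2 - \alpha^2)/(2 - \alpha)$, which is nonnegative exactly when $\alpha \leq \sqrt{2}$. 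This is what enables the monotonicity substitution to go in the correct direction and precisely accounts for the threshold $\alpha \leq \sqrt{2}$ appearing in the theorem statement.
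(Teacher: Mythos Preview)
Your proposal is correct and follows essentially the same strategy as the paper: sum \eqref{lem-basic-1}, telescope the double-sufficient-decrease inequalities of Lemma~\ref{thom-2SD}, invoke the monotonicity \eqref{mono-sub}, and finish with Lemma~\ref{in-tv23}. The only difference is bookkeeping---the paper sums \eqref{lem-basic-1} over $k=0,\ldots,N$ and applies Lemma~\ref{in-tv23} with $(y,s,\gamma)=(x^{N+1}-x^\star,\alpha(N+1),2(1-\alpha))$, whereas you sum over $k=0,\ldots,N-1$, append the one-step inequality $\|x^N-x^\star\|^2\ge\|z^N-x^\star\|^2+\|z^N-x^N\|^2+2\lambda(f(z^N)-f(x^\star))$, and apply Lemma~\ref{in-tv23} with $(y,s,\gamma)=(z^N-x^\star,\alpha N+1,0)$; both choices yield $2s+\gamma=2(\alpha N+1)$ and hence the same bound.
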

\begin{proof}
Reorganizing the terms in \eqref{lem-basic-1} gives  
\[
\frac{1}{2\lambda\alpha} \|x^{k}-x^{\star}\|^2 - \frac{1}{2\lambda\alpha} \|x^{k+1}-x^{\star}\|^2 \geq f(z^{k})-f(x^{\star})+\frac{2-\alpha}{2\lambda}\|z^{k}-x^{k}\|^2, \; \forall k\geq 0.
\]
Summing up the above inequality over $k=0,\ldots,N$, we obtain
\begin{align} \label{thom-1-1}
\frac{1}{2\lambda\alpha}\|x^{0}-x^{\star}\|^2 &- \frac{1}{2\lambda\alpha}\|x^{N+1}-x^{\star}\|^2  \geq \sum_{i=0}^{N}\left(f(z^{i})-f(x^{\star})+\frac{2-\alpha}{2\lambda}\|z^{i}-x^{i}\|^2\right) \nonumber \\
& = \sum_{i=0}^{N}\left(f(z^{i})-f(z^{N})+\frac{2-\alpha}{2\lambda}\|z^{i}-x^{i}\|^2\right) + (N+1)(f(z^{N})-f(x^{\star})).    
\end{align}
Next, we separate the proof into two cases: $\alpha \in (0,1]$ and $\alpha \in (1,\sqrt{2}]$. For $\alpha \in (0,1]$, utilizing (2SD-1) in Lemma \ref{thom-2SD}, we obtain for all $0\leq i< N$ that
\begin{align} \label{thom-1-2}
f(z^{i})-f(z^{N}) + \frac{2-\alpha}{2\lambda}\|z^{i}-x^{i}\|^2 & \hspace{3.2pt} \geq \hspace{3.2pt} \frac{\alpha+1}{2\lambda}\|z^{N}-x^{N}\|^2 + \frac{\alpha}{\lambda} \sum_{k=i+1}^{N-1} \|z^{k}-x^{k}\|^2 + \frac{1}{2\lambda}\|z^{i}-x^{i}\|^2 \nonumber\\
& \stackrel{\eqref{mono-sub}}\geq \left(\frac{(N-i)\alpha}{\lambda}+\frac{2-\alpha}{2\lambda}\right)\|z^{N}-x^{N}\|^2.
\end{align}
For  $\alpha \in (1,\sqrt{2}]$, we can also establish \eqref{thom-1-2}. In fact, in this case, by utilizing (2SD-2) in Lemma \ref{thom-2SD}, it yields for all $0\leq i< N$ that
\begin{align*}
f(z^{i})-f(z^{N}) + \frac{2-\alpha}{2\lambda}\|z^{i}-x^{i}\|^2 & \hspace{3.2pt} \geq \hspace{3.2pt} \frac{1}{2\lambda}\frac{2}{2-\alpha}\|z^{N}-x^{N}\|^2 \!+\! \frac{\alpha}{\lambda} \sum_{k\!=i+\!1}^{N-1} \|z^{k}-x^{k}\|^2 \!+\! \frac{1}{2\lambda}\frac{\alpha^2-2}{\alpha-2}\|z^{i}-x^{i}\|^2 \nonumber\\
& \stackrel{\eqref{mono-sub}}\geq \left(\frac{(N-i)\alpha}{\lambda}+\frac{2-\alpha}{2\lambda}\right)\|z^{N}-x^{N}\|^2,
\end{align*}
where the second ``$\geq$" also uses $\alpha \leq \sqrt{2}$.  
Substituting \eqref{thom-1-2} into \eqref{thom-1-1}, we obtain
\begin{align} \label{thom-1-3}
\frac{1}{2\lambda\alpha}\|x^{0}-x^{\star}\|^2 &  - \frac{1}{2\lambda\alpha}\|x^{N+1}-x^{\star}\|^2 \nonumber\\
& \geq (N+1)\left[\frac{\alpha (N+1) + 2(1-\alpha)}{2\lambda}\|z^{N}-x^{N}\|^2+(f(z^{N})-f(x^{\star}))\right].
\end{align}
Substituting $k=N$ into \eqref{lem-1-2}, we drive
\begin{align}
f(x^{\star}) & \hspace{7.35pt} \geq \hspace{7.35pt} f(z^{N})+\frac{1}{\lambda}\langle x^{N}-z^{N},x^{\star}-z^{N}\rangle = f(z^{N})+\frac{1}{\lambda}\langle x^{N}-z^{N},x^{\star}-x^{N+1}+x^{N+1}-z^{N}\rangle \nonumber  \\
& \stackrel{(\mathrm{Alg} \ref{alg:relax-step})}= f(z^{N})+\frac{1}{\lambda}\langle x^{N}-z^{N},x^{\star}-x^{N+1}\rangle + \frac{1-\alpha}{\lambda}\|z^{N}-x^{N}\|^2. \label{thom-1-4}
\end{align}
Combining \eqref{thom-1-3} and \eqref{thom-1-4}, setting $v = 2\lambda (f(z^{N})-f(x^{\star}))$, $x = x^{0}-x^{\star}$, $y = x^{N+1}-x^{\star}$, $z = z^{N}-x^{N}$, $s=\alpha(N+1)$ and $\gamma = 2(1-\alpha)$ in Lemma \ref{in-tv23}, we directly obtain the desired result \eqref{thm3.1-key} by noting that $s+\gamma >0$ holds for any $\alpha \in (0,\sqrt{2}]$ and $N\geq 1$.
Finally, the tightness of \eqref{thm3.1-key} can be easily verified by noticing that it matches the lower bound given in item (ii) of Lemma \ref{RPPA_general_lower_bound}. 
\end{proof}

\begin{remark}[Complexity bound for subgradient norm $\left\| \nabla f^{\lambda}(x^N) \right\|$] \label{rmk:constant_step_subgdnorm}
Let $\alpha_k\equiv \alpha \in (0,\sqrt{2}]$. 
Since, by \eqref{eq:gd_mor_envlp},  we have 
$\nabla f^{\lambda}(x^N)\in \partial f(z^N)$, $\left\| \nabla f^{\lambda}(x^N) \right\|$ can be viewed as the subgradient norm residual of $f$ at $z^N$. By slightly modifying the previous analysis, we can derive a tight complexity bound for $\left\| \nabla f^{\lambda}(x^N) \right\|$.
First, by summing the inequality \eqref{lem-basic-1} over $k=0,1, \ldots, N-1$, we can obtain a result analogous to \eqref{thom-1-1}. Then, by following the same reasoning as in Theorem \ref{thom-tight-analysis}, we can derive an inequality  similar to \eqref{thom-1-3}:
\begin{align} \label{remark-2-1}
    \frac{1}{2\lambda\alpha}\|x^{0}-x^{\star}\|^2 &  - \frac{1}{2\lambda\alpha}\|x^{N}-x^{\star}\|^2 \geq N\left[\frac{\alpha N + 2}{2\lambda}\|z^{N}-x^{N}\|^2+(f(z^{N})-f(x^{\star}))\right]. 
\end{align}
Next, by setting $k=N$ in \eqref{lem-1-2}, leveraging the fact that $f(z^N)-f(x^{\star})\ge 0$ and rearranging terms, we obtain
\begin{align}  \label{remark-2-2}
  \left\| x^N-x^{\star} \right\|^{2} \ge \left\| z^N-x^N \right\|^{2} + \left\| z^N-x^{\star} \right\|^{2}.
\end{align} 
By combining \eqref{remark-2-2} with \eqref{remark-2-1}, followed by eliminating certain non-negative terms from the right-hand side of \eqref{remark-2-1}, we achieve the bound $\|\nabla f^{\lambda}(x^N)\|^2 \leq \frac{1}{\lambda^2(\alpha N+1)^2}\|x^{0}-x^{\star}\|^2$ for any $\alpha \in (0,\sqrt{2}]$.  The tightness of this bound can be verified by item (i) of Lemma \ref{RPPA_general_lower_bound} as well. 
\end{remark}

\section{Tight analysis of RPPA with TV's schedule \texorpdfstring{\eqref{dynamic_stepsize}}{}} \label{TV23_schedule}

In this section,  we establish a tight worst-case complexity bound for RPPA with TV's relaxation schedule as presented in \eqref{dynamic_stepsize}.  
As in Theorem \ref{thom-tight-analysis}, function value residual is used as the optimality measure, and the established bound is approximately $\sqrt{2}$ times better as $N\rightarrow\infty$.

\begin{thom} \label{RPPA_short_dynamic}
Let $x^{\star} \in \mathcal{X}^{\star}$  and $N\in \mathbb{N}_+$ be arbitrarily fixed. 
Let $z^{N}$ be the output after $N$ iterations of Algorithm \ref{rppa} for solving \eqref{eq:merely_cvx_problem} with TV's relaxation schedule defined in \eqref{dynamic_stepsize}. Then, it holds that 
\begin{equation}\label{thm4.1-key}
f(z^{N}) - f(x^{\star}) \leq \frac{1}{A_{N-1}+1}\frac{1}{4\lambda}\|x^0-x^{\star}\|^2, 
\end{equation}  
where $A_{N-1}$ is defined in \eqref{dynamic_stepsize}. 
Furthermore, this bound is tight.
\end{thom}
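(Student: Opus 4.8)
The plan is to mirror the structure of the proof of Theorem \ref{thom-tight-analysis}, replacing the constant $\alpha$ by the schedule $\alpha_k$ from \eqref{dynamic_stepsize} and exploiting the two identities in \eqref{st}, namely $\alpha_k = 2(1+A_k)/(2+A_k)$ and $\alpha_k^2 - 2 = (2-\alpha_k)A_{k-1}$. First I would rewrite \eqref{lem-basic-1} (which holds for arbitrary relaxation schedules) in telescoping form: for each $k$,
\[
\frac{1}{2\lambda\alpha_k}\|x^k-x^\star\|^2 - \frac{1}{2\lambda\alpha_k}\|x^{k+1}-x^\star\|^2 \geq f(z^k)-f(x^\star) + \frac{2-\alpha_k}{2\lambda}\|z^k-x^k\|^2 .
\]
Because the coefficient $1/(2\lambda\alpha_k)$ now depends on $k$, the sum does not telescope cleanly; the natural fix is to multiply the $k$-th inequality by a weight $w_k$ chosen so that $w_k/\alpha_k$ is constant (i.e. $w_k$ proportional to $\alpha_k$), or equivalently to work directly with the ``$A_k$'' bookkeeping that \cite{TV23} uses. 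This is exactly the kind of weighting that Lemma \ref{step-tv23} is designed to support, since $\alpha_k^2-2 = (2-\alpha_k)A_{k-1}$ is precisely the algebraic identity that makes the $\|z^k-x^k\|^2$ terms recombine.

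Next I would establish the monotonicity $\|z^{k+1}-x^{k+1}\| \leq \|z^k-x^k\|$ for TV's schedule; the existing monotonicity lemma is stated for constant $\alpha \in (0,2]$, but its proof only uses $\alpha \in (1,2)$ at iteration $k$ and \eqref{lem-basic-3} (valid for any $\alpha_k \neq 1$), so it extends once one checks that the one-step estimates \eqref{mono-case1-2}/\eqref{lem-basic-3} go through with $\alpha_k$ in place of $\alpha$ and $\sqrt{2}\leq \alpha_k<2$. Then I would prove the analogue of the double sufficient decrease inequality (2SD-2): combining \eqref{lem-basic-2} with the consequence of \eqref{lem-basic-3} gives
\[
f(z^k) \geq f(z^{k+1}) + \frac{1}{2\lambda}\frac{2}{2-\alpha_k}\|z^{k+1}-x^{k+1}\|^2 + \frac{1}{2\lambda}\frac{2(\alpha_k-1)^2}{\alpha_k-2}\|z^k-x^k\|^2 ,
\]
and here the key simplification is that $\alpha_k^2 - 2 = (2-\alpha_k)A_{k-1}$ turns the (negative) $\|z^k-x^k\|^2$ coefficient into something proportional to $A_{k-1}$, which is what allows the telescoping with the weights. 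Summing the weighted sufficient-decrease inequalities and using monotonicity of $\|z^k-x^k\|$, I would collapse all the intermediate $f(z^i)-f(z^N)$ and $\|z^i-x^i\|^2$ contributions into a single multiple of $\|z^N-x^N\|^2$, with the accumulated coefficient being exactly $A_{N-1}+1$ (up to the constants); the identity $\alpha_k = 2(1+A_k)/(2+A_k)$ is what forces this clean telescoping, since $A_k - A_{k-1} = \alpha_k$.

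Finally, as in Theorem \ref{thom-tight-analysis}, I would substitute $k=N$ into \eqref{lem-1-2}, write $x^\star - z^N = (x^\star - x^{N+1}) + (x^{N+1}-z^N)$, use $x^{N+1}-z^N = (1-\alpha_N)(z^N-x^N)$, and then invoke Lemma \ref{in-tv23} with $v = 2\lambda(f(z^N)-f(x^\star))$, $x = x^0-x^\star$, $y = x^{N+1}-x^\star$, $z = z^N-x^N$, and $s,\gamma$ chosen (in terms of $A_{N-1}$ and $\alpha_N$) so that $2s+\gamma = 2(A_{N-1}+1)$ and $s+\gamma \geq 0$; checking $\gamma \geq -s$ is where Lemma \ref{in-tv23}'s relaxed hypothesis (noted in the remark before it) is used. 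Tightness is immediate from item (ii) of Lemma \ref{RPPA_general_lower_bound} with $\sum_{k=0}^{N-1}\alpha_k = A_{N-1}$. The main obstacle I anticipate is getting the weighting/telescoping algebra to close exactly — i.e. verifying that the per-step coefficients produced by combining the weighted versions of \eqref{lem-basic-1}, the sufficient decrease inequality, and the identities in \eqref{st} sum precisely to $A_{N-1}+1$ rather than to something merely of the same order; this is the step where the special structure of TV's schedule (as opposed to an arbitrary increasing schedule) is essential, and where an analogue of the role played by \cite[Lemma 13]{TV23} must be reconstructed in the RPPA setting.
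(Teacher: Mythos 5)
Your outline mirrors the paper's proof quite closely in all of the essential steps: rearranging \eqref{lem-basic-1} so that each step carries the weight $\alpha_k$ rather than $1/\alpha_k$ (so the distance terms telescope with a constant coefficient $1/(2\lambda)$), deriving a TV-schedule analogue of $(\mathrm{2SD\mbox{-}2})$ from \eqref{lem-basic-2}--\eqref{lem-basic-3}, exploiting the identities in \eqref{st}, and finishing by substituting $k=N$ into \eqref{lem-1-2} and invoking Lemma \ref{in-tv23} with $v=2\lambda(f(z^N)-f(x^\star))$, $s=A_N$, $\gamma=2(1-\alpha_N)$, so that $2s+\gamma=2(A_{N-1}+1)$ and $s+\gamma=A_{N-1}+2-\alpha_N>0$. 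You also correctly identify exact algebraic closure (rather than order-of-magnitude telescoping) as the crux.

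Where you diverge from the paper is the proposed monotonicity step. The paper's proof of Theorem \ref{RPPA_short_dynamic} never invokes $\|z^{k+1}-x^{k+1}\|\le\|z^k-x^k\|$. Instead, after weighting the one-step inequalities by $\alpha_i$ and summing, the coefficient of $\|z^k-x^k\|^2$ for $1\le k\le N-1$ is
\[
\frac{2A_{k-1}}{2-\alpha_{k-1}} + \frac{2A_{k-1}(\alpha_k-1)^2}{\alpha_k-2} + \frac{\alpha_k(\alpha_k^2-2)}{\alpha_k-2},
\]
and both identities in \eqref{st} conspire to make this \emph{identically zero}. The intermediate gradient-norm terms therefore drop out of the estimate entirely; nothing is ``dominated'' by $\|z^N-x^N\|^2$. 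This exact cancellation is precisely what buys the $\approx\sqrt2$ improvement over the constant-$\alpha$ bound, where the analogous coefficients are strictly positive and monotonicity is needed to compare them to the terminal residual at the cost of slack. Establishing monotonicity for TV's schedule is thus a detour that you would discover to be unnecessary once the algebra is carried out; more importantly, framing the proof around monotonicity obscures the actual mechanism behind the tighter constant. Everything else in your plan, including the remark about $\gamma\ge -s$ being the reason the relaxed hypothesis of Lemma \ref{in-tv23} is needed, and the tightness via item (ii) of Lemma \ref{RPPA_general_lower_bound}, is exactly what the paper does.
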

\begin{proof}
Similar to the proof of (2SD-2) in Lemma \ref{thom-2SD}, by utilizing \eqref{lem-basic-2} and \eqref{lem-basic-3}, we can show
\[
f(z^{k}) \geq f(z^{k+1}) + \frac{1}{2\lambda}\frac{2}{2-\alpha_{k}}\|z^{k+1}-x^{k+1}\|^2 + \frac{1}{2\lambda}\frac{2(\alpha_{k}-1)^2}{\alpha_{k}-2}\|z^{k}-x^{k}\|^2, \quad \forall k\geq 0, 
\]
which further implies for all $0\leq i< N$ that
\begin{equation}
\begin{aligned} \label{thom-2-1}
f(z^{i})-f(z^{N}) + \frac{2-\alpha_{i}}{2\lambda}\|z^{i}-x^{i}\|^2  \geq & \frac{1}{2\lambda}\frac{2}{2-\alpha_{N-1}}\|z^{N}-x^{N}\|^2 + \frac{1}{2\lambda}\frac{\alpha_{i}^2-2}{\alpha_{i}-2}\|z^{i}-x^{i}\|^2 \\
&+ \frac{1}{2\lambda}\sum_{k\!=i+\!1}^{N-1} \left(\tfrac{2}{2-\alpha_{k-1}}+\tfrac{2(\alpha_{k}-1)^2}{\alpha_{k}-2}\right) \|z^{k}-x^{k}\|^2.    
\end{aligned}    
\end{equation}
Rearranging \eqref{lem-basic-1} gives $\frac{1}{2\lambda} \|x^{k}-x^{\star}\|^2 - \frac{1}{2\lambda} \|x^{k+1}-x^{\star}\|^2 \geq \alpha_{k} (f(z^{k})-f(x^{\star})+\frac{2-\alpha_{k}}{2\lambda}\|z^{k}-x^{k}\|^2)$ for all $k\geq 0$. Summing up this inequality over $k=0,1,\ldots,N$, we obtain
\begin{align*}
\frac{1}{2\lambda}\|x^{0}\!-\!x^{\star}\|^2 \!-\! \frac{1}{2\lambda}\|x^{N+1}\!-\!x^{\star}\|^2  \geq \sum_{i=0}^{N}\alpha_{i}\Big(f(z^{i})\!-\!f(z^{N})+\frac{2-\alpha_i}{2\lambda}\|z^{i}\!-\!x^{i}\|^2\Big) +  A_{N} (f(z^{N})\!-\!f(x^{\star})).   
\end{align*}
Making use of \eqref{thom-2-1}, the above inequality further implies
\begin{align}
&\frac{1}{2\lambda}\|x^{0}\!-\!x^{\star}\|^2 \!-\! \frac{1}{2\lambda}\|x^{N+1}\!-\!x^{\star}\|^2  \geq \tfrac{\alpha_{N}(2-\alpha_{N})}{2\lambda}\|z^{N}-x^{N}\|^2 + A_{N} (f(z^{N})-f(x^{\star})) \nonumber \\
& +\frac{1}{2\lambda}\sum_{i=0}^{N-1}\alpha_{i}\Big[ \tfrac{2}{2-\alpha_{N-1}}\|z^{N}-x^{N}\|^2 + \tfrac{\alpha_{i}^2-2}{\alpha_{i}-2}\|z^{i}-x^{i}\|^2 + \sum_{k\!=i+\!1}^{N-1} \left(\tfrac{2}{2-\alpha_{k-1}}+\tfrac{2(\alpha_{k}-1)^2}{\alpha_{k}-2}\right) \|z^{k}-x^{k}\|^2 \Big] \nonumber\\
= & \frac{1}{2\lambda}\!\Big[\!\left(\tfrac{2A_{N-1}}{2-\alpha_{N-1}}\!+\!\alpha_{N}(2\!-\!\alpha_{N})\!\right)\!\|z^{N}\!-\!x^{N}\|^2\!+\!\sum_{k=1}^{N\!-\!1} \!\left(\tfrac{2A_{k-1}}{2-\alpha_{k-1}}\!+\!\tfrac{2A_{k-1}(\alpha_{k}-1)^2}{\alpha_{k}-2}\!+\!\tfrac{\alpha_{k}(\alpha_{k}^2-2)}{\alpha_{k}-2}\!\right)\!\|z^{k}\!-\!x^{k}\|^2\! \nonumber\\
& \qquad  +\! \tfrac{\alpha_{0}(\alpha_{0}^2-2)}{\alpha_{0}-2}\|z^{0}\!-\!x^{0}\|^2\!\Big] 
 +  A_{N} (f(z^{N})-f(x^{\star})). \label{thom-2-2}
\end{align}
Applying the results from \eqref{st}, we can verify for $k=1,\ldots,N-1$ that
\begin{align*}
\tfrac{2A_{k-1}}{2-\alpha_{k-1}}&\!+\!\tfrac{2A_{k-1}(\alpha_{k}-1)^2}{\alpha_{k}-2}\!+\!\tfrac{\alpha_{k}(\alpha_{k}^2-2)}{\alpha_{k}-2}
= A_{k-1}(2+A_{k-1})+\tfrac{1}{\alpha_{k}-2}[2A_{k-1}(\alpha_{k}-1)^2+\alpha_{k}(2A_{k-1}-\alpha_{k}A_{k-1})]\\
& = A_{k-1}(2+A_{k-1}) + \tfrac{1}{\alpha_{k}-2} [A_{k-1}(\alpha_{k}^2-2\alpha_{k})+2A_{k-1}] =A_{k-1}(2+A_{k-1})+A_{k-1}(\alpha_{k}+\tfrac{2}{\alpha_{k}-2})\\
& = A_{k-1}(2+A_{k-1})+A_{k-1}(\alpha_{k}-2-A_{k})=0,
\end{align*}
as well as 
\begin{align*}
\tfrac{2A_{N-1}}{2-\alpha_{N-1}}&\!+\!\alpha_{N}(2\!-\!\alpha_{N})\! =\!  A_{N-1}(2+A_{N-1})\!+\!2\alpha_{N}-\alpha_{N}^2 \!=\! A_{N-1}^2\!-\!\alpha_{N}^2 \!+\! 2A_{N}= A_{N}(A_{N}+2(1\!-\!\alpha_{N})).
\end{align*}
Hence, \eqref{thom-2-2} can be simplified to 
\begin{align}
\frac{1}{2\lambda}\|x^{0}\!-\!x^{\star}\|^2 \!-\! \frac{1}{2\lambda}\|x^{N+1}\!-\!x^{\star}\|^2 \geq \frac{A_{N}(A_{N}+2(1\!-\!\alpha_{N})) }{2\lambda}\|z^{N}-x^{N}\|^2 \!+\! A_{N} (f(z^{N})\!-\!f(x^{\star})). \label{thom-2-3}
\end{align}
Similar to \eqref{thom-1-4}, we can derive
\begin{align} \label{thom-2-4}
f(x^{\star}) & \geq f(z^{N})+\frac{1}{\lambda}\langle x^{N}-z^{N},x^{\star}-x^{N+1}\rangle + \frac{1-\alpha_{N}}{\lambda}\|z^{N}-x^{N}\|^2.
\end{align}
By combining \eqref{thom-2-3} and \eqref{thom-2-4}, and setting $v = 2\lambda (f(z^{N})-f(x^{\star}))$, $x = x^{0}-x^{\star}$, $y = x^{N+1}-x^{\star}$, $z = z^{N}-x^{N}$, $s=A_{N}$ and $\gamma = 2(1-\alpha_{N})$ in Lemma \ref{in-tv23},  the desired upper bound result \eqref{thm4.1-key} can be derived straightforwardly.
Finally, the tightness of \eqref{thm4.1-key} follows from item (ii) of Lemma \ref{RPPA_general_lower_bound} and the definition of $A_{N-1}$ in Lemma \ref{in-tv23}.
\end{proof}

\section{Accelerate RPPA via silver stepsize schedules} \label{sec-acc}
In this section, we analyze the performance of RPPA using the silver stepsize schedule from \cite{AP23b}. Additionally, we propose two modified schedules based on the silver stepsize schedule.
These schedules permit the stepsizes to be considerably larger than $2$ and enable higher-order convergence rates of $O( 1/ N^{1.2716})$ for either the function value residual or the subgradient norm, surpassing the standard $O(1 / N)$ rate.

Before deriving the results for RPPA, we first establish the tight complexity bound for GD method with the silver stepsize schedule for minimizing smooth convex functions.
Part of these results can be directly utilized to establish the tight convergence rate of RPPA with the silver stepsize schedule.

\subsection{Tight convergence rate of GD method with the silver stepsize schedule}\label{subsec:tight_silver_gd}

Consider minimizing an $L$-smooth convex function $h$ via the $N$-step GD method \eqref{alg:N-step-GD} with stepsize schedule $\boldsymbol{\alpha} = (\alpha_0, \alpha_1, \ldots, \alpha_{N-1})\in\mathbb{R}^N_{++}$ normalized by $L$.
For any $m\ge 1$, the silver stepsize schedule, denoted by $\pi^{(m)}$, was first proposed for the GD method in \cite{AP23b}.
The $m$-th silver stepsize schedule $\pi^{(m)}$ is a vector in $\mathbb{R}^{2^m-1}_{++}$, and it is recursively defined as follows:
\begin{equation} \label{def:silver_step}
  \pi^{(1)} := [\sqrt{2}] \text{~~and~~} \pi^{(m+1)}:= [\pi^{(m)}, 1+\rho^{m-1}, \pi^{(m)}], \quad \forall m\ge 1,
\end{equation}
where $\rho=1+\sqrt{2}$ is the silver ratio satisfying $\rho^{2}=2\rho+1$. 
For convenience, the components of $\pi^{(m)}$ are indexed by $i\in \{0,1,\ldots,2^m-2\}$. 
By this definition, we have $\sum_{i=0}^{2^m-2} \pi_i^{(m)} = \rho^{m}-1$. 
In \cite{AP23b}, the following convergence rate for GD method with silver stepsize schedule was established.
\begin{thom}[{\cite[Theorem 1.1]{AP23b}}]
  For any fixed $m\ge 1$, let $N=2^m-1$. Then, for any $L$-smooth convex function $h:\mathbb{R}^{d}\to \mathbb{R}$ and any $x^{0} \in \mathbb{R}^{d}$, there holds
  $$
  h(x^{N}) - h(x^\star) \le  \frac{L\left\| x^0-x^\star \right\|^{2}}{1+\sqrt{4\rho^{2m-3}-3}} \le \frac{L\left\| x^0-x^\star \right\|^{2}}{2\rho^{\log_2 N}} \approx \frac{L\left\| x^0-x^\star \right\|^{2}}{2 N^{1.2716}},
  $$
  where $x^{\star}$ denotes an arbitrarily fixed minimizer of $h$, and $x^N$ denotes the output of the $N$-step GD method \eqref{alg:N-step-GD} using the silver stepsize schedule $\boldsymbol{\alpha} = \pi^{(m)}$.
\end{thom}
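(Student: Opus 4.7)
The plan is to proceed by induction on $m$, exploiting the recursive definition $\pi^{(m+1)}=[\pi^{(m)},1+\rho^{m-1},\pi^{(m)}]$. The base case $m=1$ is a single GD step with stepsize $\sqrt{2}$; the desired bound follows from a direct weighted combination of the $L$-smooth convex interpolation inequalities applied to the triple $\{x^0,x^1,x^\star\}$, which even for $\alpha=\sqrt{2}>1$ is manageable since only three points are involved.

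For the inductive step I would adopt the Performance Estimation Problem (PEP) dual-certificate viewpoint. The only admissible facts about $h$ are the pairwise interpolation inequalities for $L$-smooth convex functions,
\[
  h(y)\ge h(x)+\langle\nabla h(x),y-x\rangle+\frac{1}{2L}\|\nabla h(y)-\nabla h(x)\|^{2},
\]
applied to all pairs among the iterates $x^0,\ldots,x^N$ and the minimizer $x^\star$. One then hunts for nonnegative multipliers so that the weighted sum of these inequalities, modulo a sum-of-squares remainder, produces exactly the target bound on $h(x^N)-h(x^\star)$. The recursive structure of $\pi^{(m)}$ is reflected in a recursive construction of the certificate: the multipliers for $\pi^{(m+1)}$ are built by concatenating two copies of the multipliers for $\pi^{(m)}$, bridged by a single "hinge" multiplier tied to the middle stepsize $1+\rho^{m-1}$. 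Equivalently, one may carry a Lyapunov quantity along the trajectory whose weights on $h(x^k)-h(x^\star)$, $\|x^k-x^\star\|^{2}$, and mixed gradient terms satisfy a two-term recursion driven by the silver identity $\rho^{2}=2\rho+1$.

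The main obstacle is verifying that the gluing actually closes. The hinge stepsize $1+\rho^{m-1}$ is tuned precisely so that the residual terms left over at the end of the first $\pi^{(m)}$ block (gradient inner products against $x^\star-x^k$ together with quadratic terms in $\|\nabla h(x^k)\|$) cancel exactly against those created at the start of the second $\pi^{(m)}$ block. This cancellation is what upgrades the naive $O(1/m)=O(1/\log N)$ rate one would get from blind concatenation into the accelerated rate $O(1/N^{\log_2 \rho})$, and it relies crucially on $\rho^{2}=2\rho+1$. I expect the bulk of the labor to be bookkeeping algebra confirming this cancellation and pinning down that the contraction factor per recursion level is exactly $\rho$, so that $C_{m+1}=C_m/\rho$ for the leading coefficient in the bound; once this is in hand, unrolling $m$ levels of recursion gives the stated $O(\rho^{-m})=O(N^{-\log_2\rho})\approx O(N^{-1.2716})$ rate, with the precise prefactor $1/(1+\sqrt{4\rho^{2m-3}-3})$ emerging from tracking the explicit Lyapunov coefficients.
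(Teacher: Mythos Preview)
The paper does not give its own proof of this statement: the theorem is quoted verbatim from \cite{AP23b} as background, with no argument supplied here. What the paper \emph{does} prove is the sharper Theorem~\ref{thm:silver_gd_tight}, and for that it relies on Lemma~\ref{lem:silver_basic_lem} (recalled from \cite{GSW24}), which packages the entire recursive PEP certificate you describe---the matrix $A^{(m)}$ in \eqref{def:A_mat_recur} is precisely the recursively-glued multiplier system. So your plan is essentially the same machinery the paper invokes, just unpacked.

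Your sketch is correct in spirit and matches the approach of \cite{AP23b}: induction on $m$, dual multipliers for the interpolation inequalities $Q^h_{x,y}\ge 0$, and a recursive gluing that exploits $\rho^{2}=2\rho+1$ at the hinge step $1+\rho^{m-1}$. One caution: the recursion for the leading coefficient is not the clean geometric $C_{m+1}=C_m/\rho$ you posit; the exact prefactor $1/(1+\sqrt{4\rho^{2m-3}-3})$ arises from a more delicate two-term recurrence on the Lyapunov weights, and only \emph{asymptotically} contracts by $\rho$ per level. If you actually carry out the algebra you will need to track two running quantities (roughly, the coefficient on $h(x^k)-h(x^\star)$ and the coefficient on $\|\nabla h(x^k)\|^{2}$) rather than one. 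The paper sidesteps this by citing the finished identity \eqref{eq:silver-basic-for-gd} as a black box and then adding a few extra $Q^h$ terms to extract the tight constant $1/(4\rho^m-2)$, which is both simpler and stronger than the bound you are aiming for.
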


However, as discussed in \cite[Section 3.1]{LG24}, the above convergence rate is not tight. By \cite[Conjecture 3.2]{LG24}, GD method with the silver stepsize schedule $\pi^{(m)}$ was conjectured to have the following tight rates: 
\[
h(x^N) - h(x^\star) \le \frac{L \left\| x^0-x^\star \right\|^{2}} 
{ 4 \rho^{m}-2 }
\text{~~and~~}
\left\| \nabla h(x^N)\right\| \le \frac{ L \left\| x^0-x^\star \right\|}{\rho^{m}}.
\]
In the following, we will prove this conjecture is true. To do this, we first introduce the following definition of $Q^h_{x,y}$, whose nonnegativity plays a fundamental role in the proof.

\begin{definition}
  Given any $L$-smooth convex function $h$ and $x,y \in \mathbb{R}^{d}$, we define
\begin{equation}\label{def:Qh_xy}
  Q^h_{x,y} := h(x) - h(y) - \left<\nabla h(y), x- y \right> - \frac{1}{2L} \left\| \nabla h(x) - \nabla h(y) \right\|^{2},
\end{equation}
which is nonnegative due to $L$-smoothness and convexity of $h$ \cite[Theorem 2.1.5]{Nes18}.
\end{definition}

Next, we recall \cite[Lemma 1]{GSW24}, which is useful in our analysis. 
\begin{lemma}[{\cite[Lemma 1]{GSW24}}] \label{lem:silver_basic_lem}
  For any fixed $m\ge 1$, let $N=2^m-1$. Furthermore, define a matrix $A^{(m)} \in \mathbb{R}^{2^m\times 2^m}$, whose components are indexed by
   $(i,j)\in \{0,1, \ldots, 2^m-1\}^2$, 
  recursively by
  \begin{equation}\label{def:A_mat_recur}
      A^{(1)}: = \begin{bmatrix} 0 & \rho \\ 1 & 0 \\\end{bmatrix},   \text{~~and~~} 
      A^{(i+1)}:=\begin{bmatrix} A^{(i)} &  \\  & \rho^{2} A^{(i)} \\\end{bmatrix} +
      \begin{bmatrix} \mathbf{0}_{(2^{i}-1)\times (2^{i}-1)} &  &  &  \\  & 0  & \rho \pi^{(i)} & \rho \\  &  & \mathbf{0}_{(2^{i}-1)\times (2^{i}-1)} &  \\  & \rho^i & \rho \pi^{(i)} & 0 \\\end{bmatrix},
  \end{equation}
  for $i=1,2,\ldots,m-1$. 
  Let $h:\mathbb{R}^{d}\to \mathbb{R}$ be any $L$-smooth convex function, $x^\star$ be any fixed minimizer of $h$ and $x^0\in\mathbb{R}^d$ be any initial point. 
  Let $\{ x^k \}_{k=0}^{N}$ be the sequence generated by the $N$-step GD method \eqref{alg:N-step-GD} with stepsize schedule $\boldsymbol{\alpha}=\pi^{(m)}$. Then, we have
    \begin{align}
      0\le \sum_{i,j \in \{ 0,1, \ldots ,2^m-1 \}}  A^{(m)}_{i,j} Q^h_{x^i,x^j} = & -\sum_{i=0}^{2^m-2}\alpha_i \Big( h(x^{2^m-1}) - h(x^i) - \frac{1}{2L} \left\| \nabla h(x^i) \right\|^{2}-\left<\nabla h(x^i), x^0-x^i \right>\Big) \nonumber \\
      &- \frac{L}{2} \left\| x^{2^m-1} - x^0 \right\|^{2} - \frac{\rho^m(\rho^m-1)}{2L} \left\| \nabla h(x^{2^m-1}) \right\|^{2}. \label{eq:silver-basic-for-gd}
    \end{align} 
\end{lemma}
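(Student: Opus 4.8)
The plan is to prove the statement of Lemma~\ref{lem:silver_basic_lem} by induction on $m$, following the recursive structure of both the silver stepsize schedule $\pi^{(m)}$ and the matrix $A^{(m)}$. The base case $m=1$ (so $N=1$) is a direct computation: with $\alpha_0=\sqrt{2}$ and $A^{(1)}=\bigl[\begin{smallmatrix}0&\rho\\1&0\end{smallmatrix}\bigr]$, the left-hand side is $\rho\, Q^h_{x^0,x^1}+Q^h_{x^1,x^0}$, and one expands both $Q^h_{x^0,x^1}$ and $Q^h_{x^1,x^0}$ using the defining relation $x^1=x^0-\tfrac{\sqrt2}{L}\nabla h(x^0)$, together with $\rho^2=2\rho+1$, to check the claimed identity term by term.

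For the inductive step, I would assume the identity holds for $m=i$ and any $L$-smooth convex $h$, and establish it for $m=i+1$, where $N=2^{i+1}-1$. The key observation is that the GD trajectory $\{x^k\}_{k=0}^{2^{i+1}-1}$ generated with schedule $\pi^{(i+1)}=[\pi^{(i)},\,1+\rho^{i-1},\,\pi^{(i)}]$ splits into two sub-trajectories of $2^i-1$ steps each: the first $2^i-1$ GD steps (indices $0$ through $2^i-1$) are exactly GD with schedule $\pi^{(i)}$ started at $x^0$, and the last $2^i-1$ GD steps (indices $2^i$ through $2^{i+1}-1$) are GD with schedule $\pi^{(i)}$ started at $x^{2^i}$. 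Applying the inductive hypothesis to each block yields two instances of \eqref{eq:silver-basic-for-gd}; the recursive block structure of $A^{(i+1)}$ — its diagonal blocks being $A^{(i)}$ and $\rho^2 A^{(i)}$ — is precisely matched to combining these two instances (the second scaled by $\rho^2$). What remains is to account for the off-diagonal correction block in \eqref{def:A_mat_recur}, i.e.\ the extra terms $\rho\pi^{(i)}$, $\rho$, $\rho^i$ coupling the two halves; these contribute additional $Q^h$ terms of the form $Q^h_{x^i,x^{2^{i+1}-1}}$, $Q^h_{x^{2^i},x^j}$ and the ``middle'' step $x^{2^i}=x^{2^i-1}-\tfrac{1+\rho^{i-1}}{L}\nabla h(x^{2^i-1})$, which must be expanded and reconciled with the target right-hand side.

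The main obstacle, as usual in silver-stepsize arguments, is the bookkeeping in this reconciliation: after summing the two (possibly rescaled) inductive instances and adding the cross-block contributions, one obtains an expression involving $h(x^{2^i-1})$, $h(x^{2^{i+1}-1})$, the gradients $\nabla h(x^{2^i-1})$, $\nabla h(x^{2^{i+1}-1})$, $\nabla h(x^0)$, and various inner products and squared norms of differences $x^{2^{i+1}-1}-x^0$, $x^{2^i}-x^0$, etc. One must then repeatedly use the GD update relations to rewrite $x$-differences in terms of gradients, use $\sum_{j=0}^{2^i-2}\pi^{(i)}_j=\rho^i-1$ and hence $\sum_{j=0}^{2^{i+1}-2}\pi^{(i+1)}_j=\rho^{i+1}-1$, and invoke $\rho^2=2\rho+1$ to collapse the coefficients. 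The target coefficients on the right-hand side of \eqref{eq:silver-basic-for-gd} for level $i+1$ are $-\tfrac{L}{2}\|x^{2^{i+1}-1}-x^0\|^2$ and $-\tfrac{\rho^{i+1}(\rho^{i+1}-1)}{2L}\|\nabla h(x^{2^{i+1}-1})\|^2$, and checking that all the accumulated terms combine into exactly this form (with the intermediate quantities at index $2^i-1$ and $2^i$ cancelling) is the delicate part. Since this is essentially the content of \cite[Lemma 1]{GSW24}, I would cite that reference for the full verification and present only the structural induction and the base case in detail, noting that the cross-term cancellations are driven entirely by the algebraic identity $\rho^2=2\rho+1$ and the GD recursion.
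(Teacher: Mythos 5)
The paper does not prove this lemma at all: it is imported verbatim from \cite[Lemma 1]{GSW24} and used as a black box (the paper's own contribution with respect to it is the adaptation in Lemma~\ref{lem:RPPA_silver_basic_lem}, where it is invoked with $h = f^{\lambda}$). So there is no in-paper proof to compare against.

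Your sketch does capture the inductive structure that underlies the cited result, and the high-level idea is sound: the trajectory with schedule $\pi^{(m+1)} = [\pi^{(m)},\,1+\rho^{m-1},\,\pi^{(m)}]$ decomposes into a $\pi^{(m)}$-trajectory from $x^0$ to $x^{2^m-1}$, a middle step producing $x^{2^m}$, and a $\pi^{(m)}$-trajectory from $x^{2^m}$ to $x^{2^{m+1}-1}$; the block-diagonal parts $A^{(m)}$ and $\rho^2 A^{(m)}$ of $A^{(m+1)}$ pair with the two halves via the inductive hypothesis (the second with $x^{2^m}$ playing the role of the starting point), and the correction block supplies the cross-terms. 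I verified your base case: $\rho\, Q^h_{x^0,x^1}+Q^h_{x^1,x^0}$ expanded with $x^1 = x^0 - \tfrac{\sqrt{2}}{L}\nabla h(x^0)$ and $\rho^2 = 2\rho+1$ does reduce to the claimed right-hand side, so that part is correct.

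Two caveats. First, your description of the cross-block contributions is imprecise: reading off the correction matrix in \eqref{def:A_mat_recur}, the nonzero entries sit in rows $2^m-1$ and $2^{m+1}-1$ and columns $2^m-1$ through $2^{m+1}-1$, so the added terms are $\rho\,\pi^{(m)}_j Q^h_{x^{2^m-1},\,x^{2^m+j}}$, $\rho\, Q^h_{x^{2^m-1},\,x^{2^{m+1}-1}}$, $\rho^m Q^h_{x^{2^{m+1}-1},\,x^{2^m-1}}$, and $\rho\,\pi^{(m)}_j Q^h_{x^{2^{m+1}-1},\,x^{2^m+j}}$ — not the forms $Q^h_{x^i,x^{2^{i+1}-1}}$ and $Q^h_{x^{2^i},x^j}$ you listed. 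If you were to carry out the induction these indices matter. Second, and more fundamentally, your proposal is not self-contained: you explicitly defer the key step — showing that the two rescaled inductive instances (the second anchored at $x^{2^m}$ rather than $x^0$) plus the cross-terms collapse to the single identity anchored at $x^0$, with all intermediate-index quantities cancelling — to \cite{GSW24}. That deferral is precisely what the paper already does by citing the lemma, so your proposal does not go beyond the paper except in supplying a (correct) base case and a structural roadmap. To constitute a genuine reproof you would need to carry out the reconciliation, which is the entire substance of \cite[Lemma 1]{GSW24}.
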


Using Lemma \ref{lem:silver_basic_lem}, we can then prove the validity of Conjecture 3.2 in  \cite{LG24}.
\begin{thom}[{\cite[Conjecture 3.2]{LG24}}] \label{thm:silver_gd_tight}
  For any fixed $m\ge 1$, let $N=2^m-1$.  Let $h:\mathbb{R}^{d}\to \mathbb{R}$ be any $L$-smooth convex function, $x^\star$ be any fixed minimizer of $h$, $x^0\in\mathbb{R}^d$ be any initial point. Let $\{ x^k \}_{k=0}^{N}$ be the sequence generated by the $N$-step GD method \eqref{alg:N-step-GD} with stepsize schedule $\boldsymbol{\alpha}=\pi^{(m)}$. Then, we have 
  \begin{align}
    h(x^N) - h(x^\star) &\le \frac{L \left\| x^0-x^\star \right\|^{2}}{2 \Big( 2\sum_{i=0}^{2^m-2} \pi^{(m)}_i + 1 \Big) } =\frac{L \left\| x^0-x^\star \right\|^{2}}{4 \rho^{m}-2}, \label{silver_tight_objval}\\
    \left\| \nabla h(x^N)\right\| &\le \frac{L \left\| x^0-x^\star \right\|}{ \sum_{i=0}^{2^m-2} \pi^{(m)}_i + 1} =\frac{L \left\| x^0-x^\star \right\|}{\rho^{m}}. \label{silver_tight_gdnorm}
  \end{align}
  Furthermore, these bounds are tight.
\end{thom}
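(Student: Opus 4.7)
The plan is to derive both \eqref{silver_tight_objval} and \eqref{silver_tight_gdnorm} directly from Lemma~\ref{lem:silver_basic_lem} by recasting its right-hand side in a form that exposes the target quantities $h(x^N)-h^\star$ and $\|\nabla h(x^N)\|^2$, then adding a single carefully chosen nonnegative multiple of $Q^h_{x^\star,x^N}$ or $Q^h_{x^N,x^\star}$ before dropping the surviving nonnegative terms. The only auxiliary ingredients I need beyond Lemma~\ref{lem:silver_basic_lem} are the GD recursion identity $\sum_{i=0}^{N-1}\alpha_i\nabla h(x^i)=L(x^0-x^N)$, the silver summation $\sum_{i=0}^{N-1}\alpha_i=\rho^m-1$, the nonnegativity of every $Q^h_{x,y}$ (in particular $\sum_i\alpha_i Q^h_{x^\star,x^i}\ge 0$), and the $L$-smoothness bound $\|\nabla h(x^N)\|\le L\|x^N-x^\star\|$ implied by $\nabla h(x^\star)=0$.

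Using $\nabla h(x^\star)=0$, each bracketed summand on the right of \eqref{eq:silver-basic-for-gd} decomposes as $h(x^N)-h(x^i)-\tfrac{1}{2L}\|\nabla h(x^i)\|^2-\langle\nabla h(x^i),x^0-x^i\rangle=(h(x^N)-h^\star)+Q^h_{x^\star,x^i}+\langle\nabla h(x^i),x^\star-x^0\rangle$. Weighted-summing by $\alpha_i$ and collapsing $\sum_i\alpha_i\langle\nabla h(x^i),x^\star-x^0\rangle=-L\langle x^N-x^0,x^\star-x^0\rangle$ via the GD identity, Lemma~\ref{lem:silver_basic_lem} condenses into the master relation
\[ (\rho^m-1)(h(x^N)-h^\star)+\sum_i\alpha_i Q^h_{x^\star,x^i}+\tfrac{L}{2}\|x^N-x^0\|^2-L\langle x^N-x^0,x^\star-x^0\rangle+\tfrac{\rho^m(\rho^m-1)}{2L}\|\nabla h(x^N)\|^2\le 0. \]

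To prove \eqref{silver_tight_objval}, I add $\rho^m Q^h_{x^\star,x^N}\ge 0$ to the left of the master relation; $\lambda=\rho^m$ is the positive root of $\lambda^2-\lambda-\rho^m(\rho^m-1)=0$, chosen precisely so that after completing the square in $x^N-x^0$ around $(x^\star-x^0)+\tfrac{\rho^m}{L}\nabla h(x^N)$ the $\|\nabla h(x^N)\|^2$ coefficient vanishes. Discarding the resulting perfect square together with $\sum_i\alpha_i Q^h_{x^\star,x^i}\ge 0$ yields $(2\rho^m-1)(h(x^N)-h^\star)\le\tfrac{L}{2}\|x^0-x^\star\|^2$, which is \eqref{silver_tight_objval}. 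For \eqref{silver_tight_gdnorm}, I add instead $(\rho^m-1)Q^h_{x^N,x^\star}=(\rho^m-1)(h(x^N)-h^\star)-\tfrac{\rho^m-1}{2L}\|\nabla h(x^N)\|^2\ge 0$ to the master relation: the $(h(x^N)-h^\star)$ term cancels, the $\|\nabla h(x^N)\|^2$ coefficient becomes $\tfrac{\rho^{2m}-1}{2L}$, and the quadratic terms in $x^N-x^0$ assemble into $\tfrac{L}{2}\|x^N-x^\star\|^2-\tfrac{L}{2}\|x^0-x^\star\|^2$. Replacing $\tfrac{L}{2}\|x^N-x^\star\|^2$ by the smaller $\tfrac{1}{2L}\|\nabla h(x^N)\|^2$ using $L$-smoothness lifts the coefficient to $\tfrac{\rho^{2m}}{2L}$, delivering $\tfrac{\rho^{2m}}{2L}\|\nabla h(x^N)\|^2\le\tfrac{L}{2}\|x^0-x^\star\|^2$, i.e., \eqref{silver_tight_gdnorm}.

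For tightness, both upper bounds saturate the generic $N$-step GD lower bounds $\tfrac{L}{2+4\sum_i\alpha_i}$ and $\tfrac{L}{1+\sum_i\alpha_i}$ of \cite[Lem.~3.1]{LG24} specialized to $\sum_i\alpha_i=\rho^m-1$, matched by standard one-dimensional worst-case instances entirely analogous to those used in the proof of Lemma~\ref{RPPA_general_lower_bound}. The main obstacle is the guess of the two dual multipliers $\rho^m$ and $\rho^m-1$; they are forced once one insists on clean cancellation of either the $\|\nabla h(x^N)\|^2$ coefficient (function-value case) or the $(h(x^N)-h^\star)$ coefficient (gradient-norm case), but identifying them without a PEP-dual viewpoint is the only non-mechanical step. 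Once they are in hand, no structural property of $\pi^{(m)}$ beyond the summation $\sum_i\alpha_i=\rho^m-1$ already encoded in Lemma~\ref{lem:silver_basic_lem} enters the argument.
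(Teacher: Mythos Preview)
Your proof is correct and takes essentially the same route as the paper: both derive \eqref{silver_tight_objval} and \eqref{silver_tight_gdnorm} from Lemma~\ref{lem:silver_basic_lem} by adjoining nonnegative multiples of $Q^h_{x^\star,x^i}$, $Q^h_{x^\star,x^N}$, and $Q^h_{x^N,x^\star}$ and completing the square (your master relation simply pre-absorbs the $\sum_i\alpha_i Q^h_{x^\star,x^i}$ terms that the paper adds explicitly, and for \eqref{silver_tight_gdnorm} you replace the paper's combination $Q^h_{x^\star,x^N}+\rho^m Q^h_{x^N,x^\star}$ by the equivalent $(\rho^m-1)Q^h_{x^N,x^\star}$ together with the Lipschitz estimate $\|\nabla h(x^N)\|\le L\|x^N-x^\star\|$). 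One wording caution: with your sign convention (master relation written as $\text{LHS}\le 0$), ``adding'' the nonnegative quantity $\rho^m Q^h_{x^\star,x^N}$ to the left would not preserve the inequality---what you mean, and what your subsequent algebra in fact carries out, is \emph{subtracting} it from the left (equivalently, adding it on the $0\le\cdot$ side of the original Lemma~\ref{lem:silver_basic_lem} inequality).
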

\begin{proof}
  To prove \eqref{silver_tight_objval}, we note that $0\le Q_{x^{\star}, x^i}^h=h(x^\star)-h(x^i) - \left<\nabla h(x^i), x^\star - x^i \right> - \frac{1}{2L} \left\| \nabla h(x^i) \right\|^{2}$. Thus, Lemma \ref{lem:silver_basic_lem} implies
  \begin{align*}
    0 \hspace{3.2pt} \le \hspace{3.2pt} & \sum_{i,j \in \{ 0,1, \ldots ,2^m-1 \}}  A^{(m)}_{i,j} Q^h_{x^i,x^j} + \sum_{i=0}^{2^m-2} Q^{h}_{x^\star,x^i} + \rho^m Q^{h}_{x^\star, x^{2^m-1}} \\
    \overset{\eqref{eq:silver-basic-for-gd}}{=} & \sum_{i=0}^{2^m-2} \pi^{(m)}_i \left( h(x^\star)-h(x^{2^m-1}) - \left<\nabla h(x^i), x^\star - x^0 \right> \right)- \frac{L}{2} \left\| x^{2^m-1} - x^0 \right\|^{2} - \frac{\rho^{2m}}{2L} \left\| \nabla h(x^{2^m-1}) \right\|^{2} \\
    & + \rho^m \Big( h(x^\star)-h(x^{2^m-1}) - \big<\nabla h(x^{2^m-1}), x^\star - x^{2^m-1}\big> \Big)\\
    \hspace{2pt} \overset{(*)}{=} \hspace{2pt} & (2\rho^m-1) \Big( h(x^\star)-h(x^{2^m-1}) \Big) + L \Big< x^{2^m-1}-x^0, x^\star - x^0\Big> + \frac{L}{2} \left\| x^0-x^\star \right\|^{2} - \frac{L}{2} \left\| x^0-x^\star \right\|^{2}\\
    & - \frac{L}{2} \left\| x^{2^m-1} - x^0 \right\|^{2} - \frac{\rho^{2m}}{2L} \left\| \nabla h(x^{2^m-1}) \right\|^{2} - \rho^{m} \left<\nabla h(x^{2^m-1}), x^\star - x^{2^m-1} \right>\\
    \hspace{3.2pt} = \hspace{3.2pt} & (2\rho^m-1) \left( h(x^\star)-h(x^{2^m-1}) \right) + \frac{L}{2} \left\| x^0-x^\star \right\|^{2} - \frac{L}{2} \left\| x^{2^m-1} - \frac{\rho^m}{L} \nabla h(x^{2^m-1}) -x^\star \right\|^{2},
  \end{align*}
  where we have used $\sum_{i=0}^{2^m-2} \pi^{(m)}_i = \rho^m-1$ and 
  $-\sum_{i=0}^{2^m-2} \pi^{(m)}_i \nabla h(x^i) = L(x^{2^m-1}-x^0)$ in $(*)$. Reorganizing the terms yields the desired upper bound in \eqref{silver_tight_objval}.

  To prove \eqref{silver_tight_gdnorm}, we note that $0\le Q_{x^{\star}, x^i}^h=h(x^\star)-h(x^i) - \left<\nabla h(x^i), x^\star - x^i \right> - \frac{1}{2L} \left\| \nabla h(x^i) \right\|^{2}$ and
  $0\le Q_{ x^{2^m-1}, x^{\star}}^h=h(x^{2^m-1}) -h(x^\star) - \frac{1}{2L} \left\| \nabla h(x^{2^m-1}) \right\|^{2}$.
  Thus, Lemma \ref{lem:silver_basic_lem} implies
  \begin{align*}
    0 \hspace{3.2pt} \le \hspace{3.2pt} & \sum_{i,j \in \{ 0,1, \ldots ,2^m-1 \}}  A^{(m)}_{i,j} Q^h_{x^i,x^j} + \sum_{i=0}^{2^m-2} Q^{h}_{x^\star,x^i} + Q^{h}_{x^\star, x^{2^m-1}} + \rho^m Q^{h}_{ x^{2^m-1}, x^\star} \\
    \overset{\eqref{eq:silver-basic-for-gd}}{=} 
    & \sum_{i=0}^{2^m-2} \pi^{(m)}_i \left( h(x^\star)-h(x^{2^m-1}) - \left<\nabla h(x^i), x^\star - x^0 \right> \right)- \frac{L}{2} \left\| x^{2^m-1} - x^0 \right\|^{2} - \frac{\rho^{2m} +1}{2L} \left\| \nabla h(x^{2^m-1}) \right\|^{2} \\
    &  - \Big<\nabla h(x^{2^m-1}), x^\star - x^{2^m-1}\Big>  + (\rho^m-1) \big( h(x^{2^m-1}) - h(x^\star) \big)\\
    \hspace{2pt} \overset{(*)}{=} \hspace{2pt}&  L \Big< x^{2^m-1}-x^0, x^\star - x^0\Big> + \frac{L}{2} \left\| x^0-x^\star \right\|^{2} - \frac{L}{2} \left\| x^0-x^\star \right\|^{2} - \frac{L}{2} \left\| x^{2^m-1} - x^0 \right\|^{2}\\
    &  - \frac{L}{2} \left\| \frac{\rho^m}{L}\nabla h(x^{2^m-1}) \right\|^{2} - \frac{1}{2L} \left\| \nabla h(x^{2^m-1}) \right\|^{2} -  \left<\nabla h(x^{2^m-1}), x^\star - x^{2^m-1} \right>\\
    \hspace{3.2pt} = \hspace{3.2pt} & - \frac{L}{2} \left\| \frac{\rho^m}{L}\nabla h(x^{2^m-1}) \right\|^{2} + \frac{L}{2} \left\| x^0-x^\star \right\|^{2} - \frac{L}{2} \left\| x^{2^m-1} - \frac{1}{L} \nabla h(x^{2^m-1}) -x^\star \right\|^{2},
  \end{align*}
  where we have used $\sum_{i=0}^{2^m-2} \pi^{(m)}_i = \rho^m-1$ and $-\sum_{i=0}^{2^m-2} \pi^{(m)}_i \nabla h(x^i) = L(x^{2^m-1}-x^0)$ in $(*)$. Reorganizing the terms yields the desired upper bound \eqref{silver_tight_gdnorm}. 
  
  Finally, the tightness of the bounds in \eqref{silver_tight_objval} and \eqref{silver_tight_gdnorm} can be easily confirmed by considering the Huber function as discussed in \cite[Section 3.1]{LG24}. We omit the details for brevity. 
\end{proof}

\subsection{Accelerated convergence rates for RPPA with silver stepsize schedules} \label{subsec:RPPA_with_silver}

We now return to the convex optimization problem \eqref{eq:merely_cvx_problem}. Let $\{ (x^k,z^k) \}_{k\ge 0}$ be the sequence generated by Algorithm \ref{rppa}. 
Consider the connection between RPPA and GD method as given in \eqref{alg:R-PPA-compact-form}, we can apply Theorem \ref{thm:silver_gd_tight} with
$h=f^\lambda$ and $L=1 / \lambda$ and obtain the following convergence results. 

\begin{thom}[Convergence rate of RPPA with silver stepsize schedule $\pi^{(m)}$] \label{thm:RPPA_silver_tight_basic}
  Let $x^{\star} \in \mathcal{X}^\star$ be arbitrarily fixed and $z^{\star}=\mathrm{Prox}_{\lambda f}(x^\star)=x^{\star}$. For any fixed $m\ge 1$, let $N=2^m-1$ and $\{ (x^k,z^k )\}_{k=0}^{N}$ be the sequence generated by Algorithm \ref{rppa} for solving \eqref{eq:merely_cvx_problem} with stepsize schedule $\boldsymbol{\alpha}=\pi^{(m)}$ from any starting point $x^0\in\mathbb{R}^d$. Then, we have
  \begin{align}
    f(z^N) + \frac{\lambda}{2} \left\| \nabla f ^{\lambda}(x^{N}) \right\|^{2} - f(x^\star) \le\frac{\left\| x^0-x^\star \right\|^{2}}{(4 \rho^m-2) \lambda} \text{~~and~~} 
    %\label{RPPA_silver_tight_morval}\\
    \left\| \nabla f^{\lambda}(x^N)\right\| \le\frac{\left\| x^0-x^\star \right\|}{\rho^m \lambda}.  \label{RPPA_silver_tight_gdnorm}
  \end{align}
\end{thom}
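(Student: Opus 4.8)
The plan is to obtain this result as a direct corollary of Theorem \ref{thm:silver_gd_tight}, exploiting the identification in \eqref{alg:R-PPA-compact-form} of RPPA with gradient descent applied to the Moreau envelope. First I would set $h := f^{\lambda}$ and $L := 1/\lambda$. By the properties of the Moreau envelope recalled after \eqref{alg:R-PPA-compact-form}, $h$ is convex and $(1/\lambda)$-smooth and shares the minimizer set of $f$, so the fixed $x^{\star}\in\mathcal{X}^{\star}$ is a minimizer of $h$ with $\nabla f^{\lambda}(x^{\star}) = (x^{\star}-z^{\star})/\lambda = 0$. Moreover, by \eqref{alg:R-PPA-compact-form} the RPPA update reads $x^{k+1} = x^k - \alpha_k\lambda\,\nabla f^{\lambda}(x^k)$, which is exactly the $N$-step GD iteration \eqref{alg:N-step-GD} for $h$ with stepsize schedule $\boldsymbol{\alpha}=\pi^{(m)}$ normalized by $L = 1/\lambda$ (note that the $\alpha_k/L$ in \eqref{alg:N-step-GD} becomes $\alpha_k\lambda$, matching \eqref{alg:R-PPA-compact-form}). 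Hence Theorem \ref{thm:silver_gd_tight} applies verbatim to $\{x^k\}_{k=0}^{N}$.

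Next I would translate the two conclusions of Theorem \ref{thm:silver_gd_tight} back into quantities involving $f$. For the gradient-norm bound \eqref{silver_tight_gdnorm}, substituting $h = f^{\lambda}$ and $L = 1/\lambda$ immediately gives $\|\nabla f^{\lambda}(x^N)\| \le L\|x^0-x^{\star}\|/\rho^m = \|x^0-x^{\star}\|/(\rho^m\lambda)$, which is the second claim in \eqref{RPPA_silver_tight_gdnorm}. For the function-value bound \eqref{silver_tight_objval}, the left-hand side is $f^{\lambda}(x^N) - f^{\lambda}(x^{\star})$; using \eqref{eq:fval_mor_envlp} (applied at $x^N$, which is valid by the same argument) I would rewrite $f^{\lambda}(x^N) = f(z^N) + \tfrac{\lambda}{2}\|\nabla f^{\lambda}(x^N)\|^{2}$, and observe that $f^{\lambda}(x^{\star}) = f(z^{\star}) + \tfrac{\lambda}{2}\|\nabla f^{\lambda}(x^{\star})\|^{2} = f(x^{\star})$ since $z^{\star} = x^{\star}$ and $\nabla f^{\lambda}(x^{\star}) = 0$. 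Substituting into \eqref{silver_tight_objval} with $L = 1/\lambda$ yields $f(z^N) + \tfrac{\lambda}{2}\|\nabla f^{\lambda}(x^N)\|^{2} - f(x^{\star}) \le \|x^0-x^{\star}\|^{2}/((4\rho^m-2)\lambda)$, which is the first claim.

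There is no real obstacle here: once the equivalence \eqref{alg:R-PPA-compact-form} and the Moreau-envelope identities \eqref{eq:gd_mor_envlp}--\eqref{eq:fval_mor_envlp} are in hand, the theorem is an immediate consequence of Theorem \ref{thm:silver_gd_tight}. The only two points that warrant a moment's care are (a) checking that the stepsize normalization is consistent, as noted above, and (b) recording that $x^{\star}$ is automatically a minimizer of $f^{\lambda}$, so that $\nabla f^{\lambda}(x^{\star}) = 0$ and $f^{\lambda}(x^{\star}) = f(x^{\star})$; this is what makes the left-hand side of \eqref{silver_tight_objval} collapse to $f(z^N) + \tfrac{\lambda}{2}\|\nabla f^{\lambda}(x^N)\|^{2} - f(x^{\star})$. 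If one also wishes to assert tightness, it would follow by pushing the construction $f = \eta\|\cdot\|$ from Lemma \ref{RPPA_general_lower_bound} through the correspondence, whose Moreau envelope is exactly a Huber-type function of the kind used in the tightness instance behind Theorem \ref{thm:silver_gd_tight}.
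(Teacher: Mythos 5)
Your proof is correct and follows exactly the paper's approach: the paper also derives Theorem \ref{thm:RPPA_silver_tight_basic} by invoking Theorem \ref{thm:silver_gd_tight} with $h=f^{\lambda}$ and $L=1/\lambda$ through the identification \eqref{alg:R-PPA-compact-form}, then translating via \eqref{eq:fval_mor_envlp} and the fact that $x^{\star}$ minimizes $f^{\lambda}$ with $f^{\lambda}(x^{\star})=f(x^{\star})$. The stepsize-normalization and $\nabla f^{\lambda}(x^{\star})=0$ checks you record are precisely the points that make the translation go through.
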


Note that $\rho^m = (N+1)^{\log_2 \rho} \approx (N+1)^{1.2716}$. Therefore, 
Theorem \ref{thm:RPPA_silver_tight_basic} has already provided an accelerated convergence rate for RPPA using the silver stepsize schedule. 
Moreover, the upper bound on $\left\| \nabla f^{\lambda}(x^N)\right\|$ given in \eqref{RPPA_silver_tight_gdnorm} is tight due to item (i) of Lemma \ref{RPPA_general_lower_bound}  and the fact that $\sum_{i=0}^{2^m-2}\pi^{(m)}_i=\rho^m-1$.
On the other hand, the first inequality in \eqref{RPPA_silver_tight_gdnorm}
also yields an accelerated complexity bound for the function value residual $f(z^N) - f(x^{\star})$.
However, this bound is no longer tight. Deriving a tight upper bound for $(f(z^N) - f(x^{\star})) / \left\| x^{0}-x^{\star} \right\|^{2}$ from the first inequality in \eqref{RPPA_silver_tight_gdnorm} remains challenging.
This challenge motivates us to construct two modified silver stepsize schedules: the ``right silver" stepsize schedule $\overline{\pi}^{(m)}$ and the ``left silver" stepsize schedule $\underline{\pi}^{(m)}$. 
These two modified silver stepsize schedules allow us to achieve accelerated and tight convergence rates for RPPA under the measures $(f(z^N)-f(x^{\star})) / \left\| x^0-x^{\star} \right\|^{2}$ and $\left\| \nabla f^{\lambda}(x^N) \right\|^{2} / (f(x^0)-f(x^{\star}))$, respectively.

Our ``right silver'' stepsize schedule $\overline{\pi}^{(m)}$ is defined by extending the silver stepsize schedule with an additional element on the right, i.e., 
\begin{equation} \label{def:right_silver}
  \overline{\pi}^{(0)} := [\gamma_0]  \text{~~and~~}   \overline{\pi}^{(m)} := [\pi^{(m)}, \gamma_{m}], \quad \forall m\ge 1,
\end{equation}
where $\gamma_m = \frac{1}{2} \left( 1+\sqrt{1+4\rho^{m}} \right) \geq 1$ for any $m\ge 0$ is a solution to
\begin{equation} \label{def:gammak}
  %\gamma_m\ge 1, \quad 
  \gamma_m^{2} = \gamma_m + \rho^{m}.
\end{equation}
On the other hand, our ``left silver'' stepsize schedule $\underline{\pi}^{(m)}$ 
is defined as
\begin{equation} \label{def:left_silver}
  \underline{\pi}^{(0)}:=[\gamma_0]   \text{~~and~~}   \underline{\pi}^{(m)} := [\gamma_m, \pi^{(m)}], \quad \forall m\ge 1.
\end{equation}
Define $T_m := 1 + \gamma_m +\sum_{i=0}^{2^m-2} \pi_i^{(m)}$ for $m\ge 1$. Then, it follows from \eqref{def:gammak} and $\sum_{i=0}^{2^m-2} \pi_i^{(m)}=\rho^m-1$ that
\begin{equation}
T_m = 1 + \gamma_m +\sum_{i=0}^{2^m-2} \pi_i^{(m)} = \gamma_m+\rho^m \overset{\eqref{def:gammak}}{=} \gamma_m^{2} = \frac{1}{2} + \frac{1}{2} \sqrt{1+4\rho^{m}} + \rho^{m}. \label{def:Tm}
\end{equation}
For $N = 2^m$, we have $T_m = \frac{1}{2} + \frac{1}{2}\sqrt{1 + 4 N^{\log_2 \rho}} + N^{\log_2 \rho} \sim N^{\log_2 \rho}$.

To prove convergence results of RPPA with the silver stepsize schedules $\overline{\pi}^{(m)}$ and $\underline{\pi}^{(m)}$, we need to adapt Lemma \ref{lem:silver_basic_lem} from GD method to RPPA (see Lemma \ref{lem:RPPA_silver_basic_lem}). 
This adaptation is based on the relations \eqref{eq:gd_mor_envlp} and \eqref{eq:fval_mor_envlp}. 
Before proceeding, we introduce the notation $P^f_{y,z^j}$, which is crucial to our proof.

\begin{definition}
  Given $N \in \mathbb{N}_+$, $\boldsymbol{\alpha} \in \mathbb{R}^{N}_{++}$ and $x^0\in\mathbb{R}^d$. 
  Suppose that $\{ (x^k,z^k) \}_{k=0}^N$ is the sequence generated by 
  Algorithm \ref{rppa} with stepsize schedule $\boldsymbol{\alpha}$ for solving \eqref{eq:merely_cvx_problem}.
  Let $x^\star\in\mathcal{X}^\star$ be any minimizer of $f$ and $z^{\star}=\mathrm{Prox}_{\lambda f}(x^\star)=x^{\star}$. 
  For any $y \in \mathbb{R}^{d}$ and $j \in \{ \star, 0, 1, \ldots, N \}$, define
\begin{equation}
  P^f_{y,z^j}:= f(y) - f(z^j) - \left<\nabla f^{\lambda}(x^j), y-z^j \right>.  \label{def:Pf_yz}
\end{equation}
  From \eqref{eq:gd_mor_envlp}, we have $\nabla f^{\lambda}(x^j) \in \partial f(z^{j})$ and thus $P^f_{y,z^j}\ge 0$. 
  %by the convexity of $f$ and the definition of subgradients.
\end{definition}

Next, %in Lemma \ref{lem:basic_equivalence} 
we show that $Q^{f^{\lambda}}_{x^i,x^j} \equiv P_{z^{i},z^{j}}^f$ for any $i,j \in  \{ \star,0,1, \ldots ,N \}$.
%given that the sequence $\{ (x^{k},z^{k}) \}_{k\ge 0}$ is generated by the Algorithm \ref{rppa}.

\begin{lemma}\label{lem:basic_equivalence}
  Given any fixed $N \in \mathbb{N}_+$, $\boldsymbol{\alpha} \in \mathbb{R}^{N}_{++}$ and $x^0\in\mathbb{R}^d$. Suppose that $\{ (x^k,z^k) \}_{k=0}^N$ is the sequence generated by Algorithm \ref{rppa} with stepsize schedule $\boldsymbol{\alpha}$ for solving \eqref{eq:merely_cvx_problem}. 
  Let $x^{\star}\in\mathcal{X}^{\star}$ be any minimizer of $f$ and $z^{\star}=\mathrm{Prox}_{\lambda f}(x^\star)=x^{\star}$. Then, we have
  \begin{equation} \label{eq:Q-equiv-P}
    Q^{f^{\lambda}}_{x^i,x^j} \equiv P_{z^{i},z^{j}}^f, \quad \forall i,j \in  \{ \star,0,1, \ldots ,N \}.
  \end{equation}
\end{lemma}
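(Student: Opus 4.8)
The plan is to substitute the defining relations \eqref{eq:gd_mor_envlp} and \eqref{eq:fval_mor_envlp} into the definition \eqref{def:Qh_xy} of $Q^{f^{\lambda}}_{x^i,x^j}$ (taking $h=f^{\lambda}$ and $L=1/\lambda$), expand the squared norms, and observe that every term containing $\nabla f^{\lambda}$ cancels, leaving precisely the expression $P^f_{z^i,z^j}$ from \eqref{def:Pf_yz}. Since the claim is an algebraic identity, there is essentially no obstacle; the only point requiring a moment of care is that the index value $\star$ be covered by the same relations, which it is, because $z^{\star}=x^{\star}$ and $\nabla f^{\lambda}(x^{\star})=0$, so $x^{\star}=z^{\star}+\lambda\nabla f^{\lambda}(x^{\star})$ and $f^{\lambda}(x^{\star})=f(z^{\star})+\tfrac{\lambda}{2}\|\nabla f^{\lambda}(x^{\star})\|^2$ hold trivially.

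Concretely, I would first record the two substitutions valid for every $k\in\{\star,0,1,\ldots,N\}$: from \eqref{eq:gd_mor_envlp}, $x^k = z^k + \lambda\nabla f^{\lambda}(x^k)$, and from \eqref{eq:fval_mor_envlp}, $f^{\lambda}(x^k) = f(z^k) + \tfrac{\lambda}{2}\|\nabla f^{\lambda}(x^k)\|^2$. Abbreviating $a:=\nabla f^{\lambda}(x^i)$ and $b:=\nabla f^{\lambda}(x^j)$, and using $x^i-x^j=(z^i-z^j)+\lambda(a-b)$, I would then plug these into $Q^{f^{\lambda}}_{x^i,x^j}=f^{\lambda}(x^i)-f^{\lambda}(x^j)-\langle b,\,x^i-x^j\rangle-\tfrac{\lambda}{2}\|a-b\|^2$ to arrive at $Q^{f^{\lambda}}_{x^i,x^j} = \big(f(z^i)-f(z^j)-\langle b,\,z^i-z^j\rangle\big) + \tfrac{\lambda}{2}\big(\|a\|^2-\|b\|^2-2\langle b,\,a-b\rangle-\|a-b\|^2\big)$.

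The final step is to verify that the bracketed quantity vanishes identically: expanding $\|a-b\|^2=\|a\|^2-2\langle a,b\rangle+\|b\|^2$ and $-2\langle b,a-b\rangle=-2\langle a,b\rangle+2\|b\|^2$ gives $\|a\|^2-\|b\|^2-2\langle b,a-b\rangle-\|a-b\|^2=0$. Hence $Q^{f^{\lambda}}_{x^i,x^j}=f(z^i)-f(z^j)-\langle\nabla f^{\lambda}(x^j),\,z^i-z^j\rangle=P^f_{z^i,z^j}$, which is exactly \eqref{eq:Q-equiv-P}. As a sanity check one can note that this identity is consistent with the already-known nonnegativity of both sides ($Q^{f^{\lambda}}_{x^i,x^j}\ge 0$ by smoothness and convexity of $f^{\lambda}$, and $P^f_{z^i,z^j}\ge 0$ since $\nabla f^{\lambda}(x^j)\in\partial f(z^j)$).
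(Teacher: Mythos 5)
Your proof is correct and follows essentially the same route as the paper's: substitute \eqref{eq:gd_mor_envlp} and \eqref{eq:fval_mor_envlp} into the definition of $Q^{f^{\lambda}}_{x^i,x^j}$, regroup, and observe the algebraic cancellation of all gradient terms. The paper writes the cancellation as a difference-of-squares identity rather than expanding, but the computation is the same.
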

\begin{proof}
  Recall that $f^{\lambda}$ is convex and $(1 / \lambda)$-smooth. Further considering the notation defined in \eqref{def:Qh_xy} and \eqref{def:Pf_yz} and the relations  \eqref{eq:gd_mor_envlp} and \eqref{eq:fval_mor_envlp}, we obtain
  \begin{align*}
    Q^{f^{\lambda}}_{x^i,x^j} 
    \hspace{4.3pt} \overset{\eqref{def:Qh_xy}}{=} \hspace{4.3pt} &  
    f^{\lambda}(x^i) - f^{\lambda}(x^j) 
    - \left<\nabla f^{\lambda}(x^{j}), x^{i} - x^{j} \right> - \frac{\lambda}{2} \left\| \nabla f^{\lambda}(z^{i}) - \nabla f^{\lambda}(z^{j}) \right\|^{2} \\
    \overset{\eqref{eq:gd_mor_envlp}, \eqref{eq:fval_mor_envlp}}{=} & f(z^{i}) - f(z^{j}) - \left<\nabla f^{\lambda}(x^{j}), z^{i} - z^{j} + \lambda \left( \nabla f^{\lambda}(z^{i}) - \nabla f^{\lambda}(z^{j}) \right)  \right> \\
    & + \frac{\lambda}{2} \left<\nabla f^{\lambda}(x^{i})- \nabla f^{\lambda}(x^{j}), \nabla f^{\lambda}(x^{i}) + \nabla f^{\lambda}(x^{j}) \right> - \frac{\lambda}{2} \left\| \nabla f^{\lambda}(z^{i}) - \nabla f^{\lambda}(z^{j}) \right\|^{2} \\
    =\hspace{7.5pt} & f(z^{i}) - f(z^{j}) - \left<\nabla f^{\lambda}(x^{j}), z^{i} - z^{j}   \right> = P^{f}_{z^{i},z^{j}},
  \end{align*} 
  for any $i,j \in \{ \star, 0, 1, \ldots ,N \}$. This completes the proof.
\end{proof}

By invoking $h = f^{\lambda}$ in Lemma \ref{lem:silver_basic_lem}, we can derive the following lemma, which plays a key role in proving the convergence results for RPPA with the modified silver stepsize schedules $\overline{\pi}^{(m)}$ and $\underline{\pi}^{(m)}$.

\begin{lemma}[Adapted from Lemma \ref{lem:silver_basic_lem}] \label{lem:RPPA_silver_basic_lem}
  For any fixed $m\ge 1$, let $N=2^{m}-1$ and $\{ (x^{k}, z^{k}) \}_{k=0}^N$ be the sequence generated by Algorithm \ref{rppa} for solving \eqref{eq:merely_cvx_problem} with the silver stepsize schedule $\boldsymbol{\alpha}=\pi^{(m)}$ from an arbitrarily starting point $x^0\in\mathbb{R}^d$. Then, we have
  \begin{equation} \label{eq:RPPA-basic-pep-eq}
    \begin{aligned}
      0\le \sum_{i,j \in \{ 0,1, \ldots ,2^m-1 \}} A^{(m)}_{i,j} P_{z^{i},z^{j}}^f = & \sum_{i=0}^{2^m-2} \alpha_i \left( f(z^{i}) - f(z^{2^m-1}) +\left< \nabla f^{\lambda}(x^{i}), x^{0}-z^{i} \right> \right) \\
      & - \frac{(\rho^{2m}-1) \lambda}{2} \left\| \nabla f^{\lambda}(x^{2^m-1}) \right\|^{2} - \frac{\lambda}{2} \left\| \sum_{i=0}^{2^m-2} \alpha_i \nabla f^{\lambda}(x^{i}) \right\|^{2}, \\
    \end{aligned}
  \end{equation}
  where $A^{(m)} \in \mathbb{R}^{2^m\times 2^m}$,  
  whose components are indexed by
   $(i,j)\in \{0,1, \ldots, 2^m-1\}^2$, 
   is defined in \eqref{def:A_mat_recur}.
\end{lemma}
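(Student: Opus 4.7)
The plan is to obtain the statement by applying Lemma \ref{lem:silver_basic_lem} to $h=f^{\lambda}$ with Lipschitz constant $L=1/\lambda$, since by \eqref{alg:R-PPA-compact-form} the RPPA iterates $\{x^k\}$ coincide with the gradient descent iterates on the Moreau envelope $f^{\lambda}$ using the (normalized) schedule $\boldsymbol{\alpha}=\pi^{(m)}$. This identification makes \eqref{eq:silver-basic-for-gd} directly available; what remains is to convert both sides from the ``Moreau envelope language'' to the ``$f$-and-$z^k$ language'' required in \eqref{eq:RPPA-basic-pep-eq}.

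For the left-hand side, I would invoke Lemma \ref{lem:basic_equivalence} to replace every $Q^{f^{\lambda}}_{x^i,x^j}$ by $P^{f}_{z^i,z^j}$, which immediately yields the equality asserted for the left-hand side and preserves nonnegativity (each $P^{f}_{z^i,z^j}\ge 0$).

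For the right-hand side, the main work is a careful rewriting of the bracketed quantity appearing in \eqref{eq:silver-basic-for-gd}. Using \eqref{eq:fval_mor_envlp} I would replace $f^{\lambda}(x^{2^m-1})-f^{\lambda}(x^i)$ by $f(z^{2^m-1})-f(z^i)+\frac{\lambda}{2}\bigl(\|\nabla f^{\lambda}(x^{2^m-1})\|^2-\|\nabla f^{\lambda}(x^i)\|^2\bigr)$. Then, using \eqref{eq:gd_mor_envlp} in the form $x^i-z^i=\lambda\nabla f^{\lambda}(x^i)$, I would split $\langle \nabla f^{\lambda}(x^i),x^0-x^i\rangle=\langle \nabla f^{\lambda}(x^i),x^0-z^i\rangle-\lambda\|\nabla f^{\lambda}(x^i)\|^2$. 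After these two substitutions the $\|\nabla f^{\lambda}(x^i)\|^2$ contributions inside the bracket cancel, leaving exactly $f(z^{2^m-1})-f(z^i)+\frac{\lambda}{2}\|\nabla f^{\lambda}(x^{2^m-1})\|^2-\langle \nabla f^{\lambda}(x^i),x^0-z^i\rangle$. Multiplying by $-\alpha_i$ and summing, the first and third pieces produce the sum in \eqref{eq:RPPA-basic-pep-eq}, while the constant-in-$i$ gradient-norm term contributes $-\frac{\lambda(\rho^m-1)}{2}\|\nabla f^{\lambda}(x^{2^m-1})\|^2$ via the identity $\sum_{i=0}^{2^m-2}\pi^{(m)}_i=\rho^m-1$.

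Finally, for the remaining two terms in \eqref{eq:silver-basic-for-gd}: iterating $x^{k+1}=x^k-\alpha_k\lambda\nabla f^{\lambda}(x^k)$ gives $x^{2^m-1}-x^0=-\lambda\sum_{i=0}^{2^m-2}\alpha_i\nabla f^{\lambda}(x^i)$, so $-\tfrac{1}{2\lambda}\|x^{2^m-1}-x^0\|^2=-\tfrac{\lambda}{2}\bigl\|\sum_{i=0}^{2^m-2}\alpha_i\nabla f^{\lambda}(x^i)\bigr\|^2$, which is the last displayed term in \eqref{eq:RPPA-basic-pep-eq}. Combining the leftover gradient-norm contribution $-\frac{\lambda(\rho^m-1)}{2}\|\nabla f^{\lambda}(x^{2^m-1})\|^2$ with the existing $-\frac{\rho^m(\rho^m-1)\lambda}{2}\|\nabla f^{\lambda}(x^{2^m-1})\|^2$ yields the desired coefficient $\frac{(\rho^{2m}-1)\lambda}{2}=\frac{\lambda(\rho^m-1)(1+\rho^m)}{2}$ in front of $\|\nabla f^{\lambda}(x^{2^m-1})\|^2$. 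I do not expect any genuine obstacle here; the main thing to watch is the careful accounting of gradient-norm terms so that the cancellations and recombinations reproduce the exact coefficient $(\rho^{2m}-1)\lambda/2$, and to keep track of signs when converting $x^i-z^i=\lambda\nabla f^{\lambda}(x^i)$.
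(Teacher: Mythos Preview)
Your proposal is correct and follows essentially the same approach as the paper's proof: apply Lemma~\ref{lem:silver_basic_lem} with $h=f^{\lambda}$ and $L=1/\lambda$, convert the left-hand side via Lemma~\ref{lem:basic_equivalence}, and rewrite the right-hand side using \eqref{eq:gd_mor_envlp}, \eqref{eq:fval_mor_envlp}, the identity $\sum_{i=0}^{2^m-2}\pi_i^{(m)}=\rho^m-1$, and the telescoped update $x^{2^m-1}-x^0=-\lambda\sum_{i=0}^{2^m-2}\alpha_i\nabla f^{\lambda}(x^i)$. Your bookkeeping of the $\|\nabla f^{\lambda}(x^i)\|^2$ cancellations and the recombination yielding the $(\rho^{2m}-1)\lambda/2$ coefficient is accurate and matches the paper's computation.
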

\begin{proof}
  Due to the connection \eqref{alg:R-PPA-compact-form}, we can invoke $h=f^{\lambda}$ and $L=1 / \lambda$ in Lemma \ref{lem:silver_basic_lem} and then deduce
  \begin{align*}
    0\le  \sum_{i,j \in \{ 0,1, \ldots ,2^m - 1 \}} A^{(m)}_{i,j} Q^{f^{\lambda}}_{x^{i}, x^{j}} =& \sum_{i=0}^{2^m-2} \alpha_i\left( f^{\lambda}(x^{i}) - f^{\lambda}(x^{2^m-1}) + \left<\nabla f^{\lambda}(x^{i}),x^{0}-x^{i} \right> + \frac{\lambda}{2} \left\| \nabla f^{\lambda}(x^{i}) \right\|^{2} \right) \\ 
    & - \frac{1}{2\lambda} \left\| x^{2^m-1} - x^{0} \right\|^{2} - \frac{\rho^m (\rho^m-1) \lambda}{2} \left\| \nabla f^{\lambda}(x^{2^m-1}) \right\|^{2}.
  \end{align*}
  Then, by leveraging Lemma \ref{lem:basic_equivalence} and the relations \eqref{eq:gd_mor_envlp} and \eqref{eq:fval_mor_envlp}, we derive
  \begin{align*}
    0 \hspace{1pt} \le \hspace{1pt} & \sum_{i,j \in \{ 0,1, \ldots ,2^m - 1 \}} A^{(m)}_{i,j} P_{z^{i},z^{j}}^f \overset{\eqref{eq:Q-equiv-P}}{=} \sum_{i,j \in \{ 0,1, \ldots ,2^m - 1 \}} A^{(m)}_{i,j} Q^{f^{\lambda}}_{x^{i}, x^{j}} \\
    \overset{\eqref{eq:fval_mor_envlp}}{=} & \sum_{i=0}^{2^m-2} \alpha_i\Big( f(z^{i}) + \frac{\lambda}{2} \left\| \nabla f^{\lambda}(x^{i}) \right\|^{2} - f(z^{2^m-1}) - \frac{\lambda}{2} \left\| \nabla f^{\lambda}(x^{2^m-1}) \right\|^{2} + \left<\nabla f^{\lambda}(x^{i}),x^{0}-x^{i} \right> \\ 
    & \qquad \qquad  + \frac{\lambda}{2} \left\| \nabla f^{\lambda}(x^{i}) \right\|^{2} \Big)  - \frac{1}{2\lambda} \left\|x^{2^m-1} - x^{0} \right\|^{2}- \frac{(\rho^{2m}- \rho^m) \lambda}{2} \left\| \nabla f^{\lambda}(x^{2^m-1}) \right\|^{2}  \\
    \overset{(*)}{=} &   \sum_{i=0}^{2^m-2} \alpha_i\left( f(z^{i})  - f(z^{2^m-1}) + \left<\nabla f^{\lambda}(x^{i}),x^{0}-x^{i} \right> + \lambda\left\| \nabla f^{\lambda}(x^{i}) \right\|^{2} \right)\\
    & \qquad - \frac{\lambda}{2} \left\|\sum_{i=0}^{2^m-2} \alpha_i \nabla f^{\lambda}(x^{i}) \right\|^{2}- \frac{(\rho^{2m}- 1) \lambda}{2} \left\| \nabla f^{\lambda}(x^{2^m-1}) \right\|^{2} \\
    \overset{\eqref{eq:gd_mor_envlp}}{=} & \sum_{i=0}^{2^m-2} \alpha_i \left( f(z^{i}) - f(z^{2^m-1}) +\left< \nabla f^{\lambda}(x^{i}), x^{0}-z^{i} \right> \right) \\
    & \qquad - \frac{\lambda}{2} \left\| \sum_{i=0}^{2^m-2} \alpha_i \nabla f^{\lambda}(x^{i}) \right\|^{2}  - \frac{(\rho^{2m}-1) \lambda}{2} \left\| \nabla f^{\lambda}(x^{2^m-1}) \right\|^{2},
  \end{align*}
  where we have used $\sum_{i=0}^{2^m-2} \alpha_i = \sum_{i=0}^{2^m-2} \pi^{(m)}_i = \rho^m-1$ and $x^{2^m-1}-x^0=-\sum_{i=0}^{2^m-2} \lambda \alpha_i \nabla f^{\lambda}(x^i)$ in $(*)$. This completes the proof.
\end{proof}

The following technical lemma will be used in subsequent analysis. Although it may appear complex initially, its verification is elementary as it comprises equations.

\begin{lemma}\label{lem:simplify-terms}
  For any vectors $a,b,c,d \in \mathbb{R}^{d}$ and positive scalars $r,s \in \mathbb{R}_{++}$, we have
  \begin{align}
    &\left<b,a \right> - \frac{1}{2} (r^2-1) \left\| c \right\|^{2} - \frac{1}{2} \left\| b \right\|^{2} + (r+1) \left<c, a- b- c \right> + 2r \left<d, c-d-sc \right> + 2 s \left<d, a-b-sc-d \right> \nonumber\\
    & = \frac{1}{2} \big( \left\| a \right\|^{2} - \left\| a-b-(1+r)c-2sd \right\|^{2} \big) + 2 \left( s^{2} - s - r \right) \left<d,d-c \right> \label{eq:simplify-1}
  \end{align}
  and
  \begin{align}
    &\frac{s+r}{r^{2}} \big( \left<b, (1-s)c + d \right> - (r^{2}-1)\left\| d \right\|^{2}  - \left<d, b+(s-1)c+d \right>\big) - \left\| c \right\|^{2} + \frac{s}{r} \left<c,b+(s-1)c+d \right>\nonumber\\
    & = - (s+r) \left\| d \right\|^{2} + \frac{s^{2}-s-r}{r^{2}} \left<c, -b+rc-d \right>. \label{eq:simplify-2}
  \end{align}
\end{lemma}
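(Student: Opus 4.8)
Both identities in Lemma~\ref{lem:simplify-terms} are polynomial identities in the inner products among $a,b,c,d$ (and the scalars $r,s$), so the strategy is simply to expand both sides, collect like terms, and check equality coefficient by coefficient. The only ``trick'' is to organize the expansion so it does not become unwieldy. For \eqref{eq:simplify-1}, I would first expand the right-hand side: write $\left\| a-b-(1+r)c-2sd \right\|^{2}$ as a sum of the sixteen pairwise inner products, so that $\tfrac12(\|a\|^2 - \|a-b-(1+r)c-2sd\|^2)$ becomes $\left<a,b\right> + (1+r)\left<a,c\right> + 2s\left<a,d\right> - \tfrac12\|b\|^2 - \tfrac12(1+r)^2\|c\|^2 - 2s^2\|d\|^2 - (1+r)\left<b,c\right> - 2s\left<b,d\right> - 2s(1+r)\left<c,d\right>$, and then add $2(s^2-s-r)(\left<d,d\right> - \left<d,c\right>)$. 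On the left-hand side, expand each of the five inner-product terms: $(r+1)\left<c,a-b-c\right> = (r+1)\left<c,a\right> - (r+1)\left<b,c\right> - (r+1)\|c\|^2$, $2r\left<d,c-d-sc\right> = 2r(1-s)\left<c,d\right> - 2r\|d\|^2$, and $2s\left<d,a-b-sc-d\right> = 2s\left<a,d\right> - 2s\left<b,d\right> - 2s^2\left<c,d\right> - 2s\|d\|^2$. Then I would match: the $\left<a,b\right>$, $\|b\|^2$, $\left<a,c\right>$, $\left<a,d\right>$, $\left<b,c\right>$, $\left<b,d\right>$ terms agree immediately; for $\|c\|^2$ one checks $-\tfrac12(r^2-1) - (r+1) = -\tfrac12(r+1)^2$; for $\|d\|^2$ one checks $-2r-2s$ on the left equals $-2s^2 + 2(s^2-s-r) = -2s-2r$ on the right; and for $\left<c,d\right>$ one checks $2r(1-s) - 2s^2$ on the left equals $-2s(1+r) - 2(s^2-s-r)$ on the right, i.e. $2r - 2rs - 2s^2 = -2s - 2sr - 2s^2 + 2s + 2r = 2r - 2sr - 2s^2$. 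All match, which establishes \eqref{eq:simplify-1}.

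For \eqref{eq:simplify-2}, the same mechanical approach applies, now with only the three vectors $b,c,d$ appearing. I would multiply through by $r^2$ to clear denominators, reducing the claim to the polynomial identity
\begin{align*}
&(s+r)\big(\left<b,(1-s)c+d\right> - (r^2-1)\|d\|^2 - \left<d,b+(s-1)c+d\right>\big) - r^2\|c\|^2 + sr\left<c,b+(s-1)c+d\right>\\
&= -r^2(s+r)\|d\|^2 + (s^2-s-r)\left<c,-b+rc-d\right>,
\end{align*}
then expand: on the left the $\left<b,d\right>$ terms cancel (coefficient $(s+r) - (s+r) = 0$); the $\left<b,c\right>$ coefficient is $(s+r)(1-s) + sr = s + r - s^2 - sr + sr = s + r - s^2 = -(s^2-s-r)$, matching the right; the $\|d\|^2$ coefficient on the left is $-(s+r)(r^2-1) - (s+r) = -(s+r)r^2$, matching; the $\|c\|^2$ coefficient on the left is $-r^2 + sr(s-1) = -r^2 + s^2r - sr$, while on the right it is $(s^2-s-r)r = s^2 r - sr - r^2$, matching; and the $\left<c,d\right>$ coefficient on the left is $(s+r) + sr$, while on the right it is $-(s^2-s-r)$... wait, I should recheck this sign carefully during the write-up, since $\left<c,-b+rc-d\right>$ contributes $-(s^2-s-r)$ to the $\left<c,d\right>$ coefficient and the left side has $(s+r)\cdot 1$ from $-\left<d,(s-1)c\right>$... actually $-(s+r)\left<d,(s-1)c\right> = -(s+r)(s-1)\left<c,d\right>$, plus $sr\left<c,d\right>$, so the left coefficient is $-(s+r)(s-1) + sr = -s^2 + s - rs + r + sr = -s^2 + s + r = -(s^2-s-r)$, matching the right. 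So all coefficients agree and \eqref{eq:simplify-2} holds.

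\textbf{Main obstacle.} There is no conceptual obstacle here — the lemma is, as the paper notes, purely a bookkeeping identity. The only real risk is an arithmetic slip in tracking signs and the coefficients of $\left<c,d\right>$ and $\|d\|^2$, where the combinations $s^2-s-r$ and $(1+r)^2$ versus $r^2-1$ interact; the cleanest safeguard is to reduce everything to a canonical list of monomials $\{\|b\|^2,\|c\|^2,\|d\|^2,\left<a,b\right>,\left<a,c\right>,\left<a,d\right>,\left<b,c\right>,\left<b,d\right>,\left<c,d\right>\}$ on both sides and verify each coefficient separately, exactly as sketched above. Since the statement is used only as a substitution device inside later proofs, it suffices to present these expansions; I would write them out term by term in two displayed alignments (one per identity) and invite the reader to check the matching of coefficients, which is immediate.
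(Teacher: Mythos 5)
Your proposal is correct and matches the paper's (implicit) approach: the paper explicitly remarks that the lemma is elementary and omits the proof, and direct expansion with coefficient-by-coefficient matching is exactly the intended verification. Your arithmetic checks on the $\|c\|^2$, $\|d\|^2$, and $\langle c,d\rangle$ coefficients in both identities are all correct, including the one you paused to recheck.
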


Then for RPPA with the right silver stepsize schedule $\overline{\pi}^{(m)}$, we establish an upper bound for the measure $(f(z^N) - f(x^\star)) / \left\| x^0-x^{\star} \right\|^{2}$ in the following theorem.

\begin{thom}[Tight convergence rate of RPPA with the right silver stepsize schedule  $\overline{\pi}^{(m)}$]\label{thm:objval-vs-ptnorm}
  For any fixed $m\ge 0$, let $N=2^m$ and $\{ (x^{k},z^{k}) \}_{k=0}^N$ be the sequence generated by Algorithm \ref{rppa} with stepsize schedule $\boldsymbol{\alpha} = \overline{\pi}^{(m)}$ for solving \eqref{eq:merely_cvx_problem} from any starting point $x^0 \in \mathbb{R}^{d}$. Let $x^{\star} \in \mathcal{X}^{\star}$ be any minimizer of $f$ and $z^{\star}=\mathrm{Prox}_{\lambda f}(x^\star)=x^{\star}$. Then, there holds
  $$
  f(z^N) - f(x^\star) \le \frac{1}{2T_m} \frac{1}{2\lambda}\left\| x^0-x^\star \right\|^{2} \sim \frac{1}{2 N ^{\log_2 \rho}} \frac{1}{2\lambda}\left\| x^0-x^\star \right\|^{2},
  $$
  where $T_m$ is defined in \eqref{def:Tm}. 
  Furthermore, this bound is tight.
\end{thom}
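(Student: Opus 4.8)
The plan is to mimic the structure of the proof of Theorem~\ref{thm:silver_gd_tight}, but now working with the right silver schedule $\overline{\pi}^{(m)} = [\pi^{(m)},\gamma_m]$ and tracking the extra iteration step corresponding to $\gamma_m$. The key device is Lemma~\ref{lem:RPPA_silver_basic_lem}, which gives the fundamental nonnegativity inequality for RPPA with the \emph{unextended} silver schedule $\pi^{(m)}$ over $2^m-1$ steps, expressed purely in terms of $f$, the $z^i$, and $\nabla f^{\lambda}(x^i)$. I would first apply that lemma to the first $2^m-1$ iterates $\{(x^k,z^k)\}_{k=0}^{2^m-1}$, and then separately encode what the final relaxation step $x^N = x^{2^m-1} + \gamma_m(z^{2^m-1}-x^{2^m-1})$ contributes. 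Since $N = 2^m$, the index $2^m-1 = N-1$ is the penultimate iterate and $z^N = \mathrm{Prox}_{\lambda f}(x^N)$ is the output; the relation \eqref{eq:gd_mor_envlp}–\eqref{eq:fval_mor_envlp} converts everything into gradient/function-value language.

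**Key steps, in order.** First, take the inequality \eqref{eq:RPPA-basic-pep-eq} from Lemma~\ref{lem:RPPA_silver_basic_lem} for the sub-schedule $\pi^{(m)}$ (so with $2^m-1$ steps), and add to it a suitable nonnegative combination of the quantities $P^f_{y,z^j}\ge 0$ (defined in \eqref{def:Pf_yz}), analogous to how $\sum_i Q^h_{x^\star,x^i} + \rho^m Q^h_{x^\star,x^{2^m-1}}$ was added in the GD proof. The natural candidates here are $P^f_{z^\star, z^i}$ for $i=0,\dots,2^m-2$, plus terms involving the new iterates $z^{2^m-1}$ and $z^N$ — in particular $P^f_{z^\star,z^{2^m-1}}$, $P^f_{z^{2^m-1},z^N}$ or $P^f_{z^N,z^{2^m-1}}$, weighted by powers of $\rho$ and by $\gamma_m$ — chosen so that all the ``cross'' function-value and inner-product terms telescope. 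Second, use the RPPA identities: $\nabla f^{\lambda}(x^i) = (x^i-z^i)/\lambda$, the compact form $x^{k+1}-x^k = -\alpha_k\lambda\nabla f^{\lambda}(x^k)$, and $f^{\lambda}(x^k) = f(z^k)+\tfrac{\lambda}{2}\|\nabla f^{\lambda}(x^k)\|^2$, to rewrite the accumulated inequality entirely in terms of $f(z^N)-f(x^\star)$, $\|x^0-x^\star\|^2$, and squared-norm terms. Third, invoke the algebraic identity \eqref{eq:simplify-1} in Lemma~\ref{lem:simplify-terms} — which is evidently tailored for exactly this computation, with $r,s$ to be matched to $\rho^{m}$ (or $\rho^{m-1}$) and $\gamma_m$ — to collapse the inner-product clutter into a single completed square $\tfrac12(\|a\|^2 - \|a-b-(1+r)c-2sd\|^2)$ plus a residual term proportional to $(s^2-s-r)\langle d, d-c\rangle$. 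The defining relation $\gamma_m^2 = \gamma_m + \rho^m$ in \eqref{def:gammak} should force the coefficient $s^2-s-r$ to vanish, killing the residual term. Fourth, drop the nonnegative completed-square term and the nonnegative $\|\nabla f^{\lambda}(x^N)\|^2$ term, and read off $f(z^N)-f(x^\star) \le \tfrac{1}{2T_m}\tfrac{1}{2\lambda}\|x^0-x^\star\|^2$ using $T_m = \gamma_m^2 = \tfrac12 + \tfrac12\sqrt{1+4\rho^m}+\rho^m$ from \eqref{def:Tm}. Finally, tightness follows by comparison with Lemma~\ref{RPPA_general_lower_bound}(ii): since $1 + \sum_{k=0}^{N-1}\overline{\pi}^{(m)}_k = 1 + \gamma_m + (\rho^m-1) = T_m$, the lower bound $\tfrac{1}{2T_m}\tfrac{1}{2\lambda}\|x^0-x^\star\|^2$ matches exactly.

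**Main obstacle.** The hard part will be choosing the correct nonnegative combination of $P^f$-terms (the weights, and which pairs of indices to use for the steps involving $z^{2^m-1}$, $z^N$, and $z^\star$) so that, after substituting the RPPA identities, the non-square cross terms cancel precisely and the expression lands in the exact form required by \eqref{eq:simplify-1} with $s^2-s-r=0$. This is the analogue of the somewhat delicate bookkeeping in the $(*)$ steps of Theorem~\ref{thm:silver_gd_tight}, now complicated by the extra $\gamma_m$-step, the fact that $z^N$ (not $z^{2^m-1}$) is the output, and the need to handle $\|\nabla f^\lambda(x^N)\|$ versus $\|\nabla f^\lambda(x^{2^m-1})\|$ correctly — one must verify that the coefficient of $\|\nabla f^{\lambda}(x^N)\|^2$ ends up nonnegative so it can be safely discarded. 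Identifying the right combination is essentially reverse-engineered from the target identity \eqref{eq:simplify-1}; once the pattern $r \leftrightarrow \rho^{m}$, $s \leftrightarrow \gamma_m$ is guessed, the remaining work is routine (if tedious) algebra, and \eqref{def:gammak} does the rest.
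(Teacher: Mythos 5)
Your proposal matches the paper's proof almost step for step: both start from Lemma~\ref{lem:RPPA_silver_basic_lem}, add a nonnegative combination of $P^f$-terms involving $z^\star$, $z^{2^m-1}$ and $z^{2^m}$ (the paper's choice is $\sum_{i=0}^{2^m-2}\alpha_i P^f_{z^\star,z^i}+(\rho^m+1)P^f_{z^\star,z^{2^m-1}}+2\rho^m P^f_{z^{2^m-1},z^{2^m}}+2\gamma_m P^f_{z^\star,z^{2^m}}$), rewrite via \eqref{eq:gd_mor_envlp}--\eqref{alg:R-PPA-compact-form}, collapse via \eqref{eq:simplify-1} with $r=\rho^m$, $s=\gamma_m$, kill the residual with $\gamma_m^2-\gamma_m-\rho^m=0$, and verify tightness from Lemma~\ref{RPPA_general_lower_bound}(ii). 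The only thing you leave implicit is the separate base case $m=0$ (where there is no silver prefix to which Lemma~\ref{lem:RPPA_silver_basic_lem} applies, so the paper does a short direct computation), but this is a routine omission rather than a conceptual gap.
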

\begin{proof}
  For $m=0$, we have $N=1$ and $\boldsymbol{\alpha}=\overline{\pi}^{(0)} = [\gamma_0]$. 
  Note that $0\le P_{z^{\star},z^{i}}^{f} =f(z^{\star}) - f(z^{i}) - \left< \nabla f^{\lambda}(x^{i}), z^{\star} - z^{i} \right>$, $0\le P_{z^{0},z^{1}}^{f} = f(z^{0})- f(z^{1}) - \left<\nabla f^{\lambda}(x^{1}), z^{0}-z^{1} \right>$, and $T_0 = 1+\gamma_0$. We thus have
  \begin{align}
    0 \hspace{7.8pt} \le \hspace{7.8pt} &  2  P^f_{z^{\star},z^{0}} + 2 P^f_{z^{0},z^{1}} + 2 \gamma_0 P^f_{z^{\star},z^{1}} \nonumber\\
    {\overset{\eqref{eq:gd_mor_envlp}, \eqref{alg:R-PPA-compact-form}}{=}} & 2 T_0 \left( f(z^{\star}) -  f(z^{1})\right) +  2 \left<\nabla f^{\lambda}(x^{0}), x^{0}-z^{\star} - \lambda\nabla f^{\lambda}(x^{0})\right> \nonumber\\
    & + 2 \left<\nabla f^{\lambda}(x^{1}), -\gamma_0 \lambda \nabla f^{\lambda}(x^{0})- \lambda \left( \nabla f^{\lambda}(x^{1}) - \nabla f^{\lambda}(x^{0}) \right)  \right> \nonumber \\
    & + 2 \gamma_0 \left<\nabla f^{\lambda}(x^{1}), x^{0}-z^{\star} - \gamma_0 \lambda \nabla f^{\lambda}(x^{0}) - \lambda \nabla f^{\lambda}(x^{1}) \right>, \nonumber \\
    {\overset{\eqref{eq:simplify-1}}{=}} \hspace{4.3pt} & 2 T_0 \left( f(z^{\star}) -  f(z^{1})\right)  +  \frac{1}{2 \lambda} \big( \left\| x^{0}-z^{\star} \right\|^{2} - \left\| x^{0}-z^{\star} - 2 \lambda \nabla f^{\lambda}(x^{0})-2\gamma_0 \lambda \nabla f^{\lambda}(x^{1})\right\|^{2} \big), 
    \label{eq:k0-fval-dist}
  \end{align}
  where in the second ``$=$" we have utilized \eqref{eq:simplify-1} with 
  $a = \lambda^{-1 /2} (x^{0}-z^{\star})$, $b={\bf 0}$, $c=\lambda^{1 /2} \nabla f^{\lambda}(x^{0})$, $d=\lambda^{1 /2} \nabla f^{\lambda}(x^{1})$, $r=1$, $s=\gamma_0$, 
  and the fact that $\gamma_0^{2}- \gamma_0-1 = 0$. Hence, \eqref{eq:k0-fval-dist} implies that the desired bound holds for $m=0$.

  For $m\ge 1$, we have $N=2^m$ and $\boldsymbol{\alpha} = \overline{\pi}^{(m)} = [\pi^{(m)},\gamma_m]$. 
  Since the first $2^m-1$ stepsizes are identical to the silver stepsize schedule $\pi^{(m)}$, the inequality \eqref{eq:RPPA-basic-pep-eq} in Lemma \ref{lem:RPPA_silver_basic_lem} still holds. Then, by noting $0\le P_{z^{\star},z^{i}}^f = f(z^{\star}) - f(z^{i}) - \left<\nabla f^{\lambda}(z^{i}), z^{\star} - z^{i} \right>$, $0\le P_{z^{2^m-1}, z^{2^m}}^f = f(z^{2^m-1}) - f(z^{2^m}) - \left<\nabla f^{\lambda}(z^{2^m}), z^{2^m-1} - z^{2^m} \right>$ and $T_m = 1 + \gamma_m + \sum_{i=0}^{2^m-2} \alpha_i = \gamma_m + \rho^m$, we derive
  \begin{align} 
    0\hspace{7.8pt} \le \hspace{7.8pt}
    & \sum_{i,j \in \{ 0,1, \ldots , 2^m -1\}} A^{(m)}_{i,j} P_{z^i,z^j}^f + \sum_{i=0}^{2^m-2} \alpha_i P^f_{z^\star, z^i} + (\rho^m+1) P^f_{z^\star, z^{2^m-1}}  + 2\rho^m P^f_{z^{2^m-1},z^{2^m}} + 2\gamma_m P^f_{z^\star, z^{2^m}}\nonumber\\
    \overset{\eqref{eq:RPPA-basic-pep-eq}}{=} \hspace{4.3pt} & \sum_{i=0}^{2^m-2} \alpha_i \left( f(z^\star)-f(z^{2^m-1}) + \left< \nabla f^{\lambda}(x^{i}), x^0-z^\star \right> \right) - \frac{(\rho^{2m}-1)\lambda}{2} \left\| \nabla f^{\lambda}(x^{2^m-1})\right\|^{2}  \nonumber\\
    & 
    \qquad +(\rho^m+1)\left( f(z^{\star}) - f(z^{2^m-1}) - \left<\nabla f^{\lambda}(x^{2^m-1}), z^\star-z^{2^m-1} \right> \right)  \nonumber\\
    &     \qquad + 2 \rho^m \left( f(z^{2^m-1}) - f(z^{2^m}) - \left<\nabla f^{\lambda}(x^{2^m}), z^{2^m-1}-z^{2^m} \right> \right) 
        - \frac{\lambda}{ 2} \left\| \sum_{i=0}^{2^m-2} \alpha_i \nabla f^{\lambda}(x^{i})\right\|^{2} \nonumber\\
    &     \qquad +2 \gamma_m \left( f(z^\star) - f(z^{2^m}) - \left<\nabla f^{\lambda}(x^{2^m}), z^\star-z^{2^m} \right> \right) \nonumber\\
    {\overset{\eqref{eq:gd_mor_envlp}, \eqref{alg:R-PPA-compact-form}}{=}} & 2T_m \left( f(z^{\star}) - f(z^{2^m}) \right)  + \left<\sum_{i=0}^{2^m-2} \alpha_i \nabla f^{\lambda}(x^{i}) , x^0 - z^\star \right> - \frac{(\rho^{2m}-1)\lambda}{2} \left\| \nabla f^{\lambda}(x^{2^m-1}) \right\|^{2}  \nonumber\\
    & + (\rho^m + 1) \left<\nabla f^{\lambda}(x^{2^m-1}), x^0 - z^\star - \lambda \sum_{i=0}^{2^m-2}\alpha_i \nabla f^{\lambda}(x^{i}) -\lambda \nabla f^{\lambda}(x^{2^m-1}) \right>  \nonumber\\
    & + 2\rho^m \lambda \left< \nabla f^{\lambda}(x^{2^m}),  \nabla f^{\lambda}(x^{2^m-1}) -\nabla f^{\lambda}(x^{2^m}) - \gamma_m \nabla f^{\lambda}(x^{2^m-1}) \right> 
    - \frac{\lambda}{ 2} \left\|\sum_{i=0}^{2^m-2} \alpha_i \nabla f^{\lambda}(x^{i})\right\|^{2} \nonumber\\
    & + 2\gamma_m \left<\nabla f^{\lambda}(x^{2^m}), x^0-z^\star -  \lambda \sum_{i=0}^{2^m-2}\alpha_i \nabla f^{\lambda}(x^{i}) - \gamma_m \lambda \nabla f^{\lambda}(x^{2^m-1}) - \lambda \nabla f^{\lambda}(x^{2^m}) \right>  \nonumber\\
    {\overset{ \eqref{eq:simplify-1}}{=}} \hspace{4.3pt} & - \frac{1}{2} \left\| \lambda^{-1/2}(x^0-z^\star)-\lambda^{1 /2} \sum_{i=0}^{2^m-2}\alpha_i \nabla f^{\lambda}(x^{i}) - (1+\rho^m) \lambda ^{1 /2} \nabla f^{\lambda}(x^{2^m-1})-2\gamma_m \lambda ^{1 /2} \nabla f^{\lambda}(x^{2^m}) \right\|^{2} \nonumber \\
    & + 2T_m\left( f(z^{\star}) - f(z^{2^m}) \right)+ \frac{1}{2 \lambda} \left\| x^0-z^\star \right\|^{2}, \label{eq:kgeq1-fval-dist}
  \end{align}
  where in the last ``$=$" we have utilized \eqref{eq:simplify-1} with $a=\lambda^{-1 /2} (x^{0}-z^{\star})$, $b=\lambda^{1 /2} \sum_{i=0}^{2^m-2}\alpha_i \nabla f^{\lambda}(x^{i})$, $c=\lambda ^{1 /2} \nabla f^{\lambda}(x^{2^m-1})$, $d=\lambda ^{1 /2} \nabla f^{\lambda}(x^{2^m})$, $r=\rho^m$,  $s=\gamma_m$, and the fact that $\gamma_m^{2}- \gamma_m-\rho^m = 0$.
  Hence, \eqref{eq:kgeq1-fval-dist} implies that the desired bound also holds for $m\ge 1$. Finally, the tightness of the bound can be verified by using item (ii) of Lemma \ref{RPPA_general_lower_bound} and the fact that $T_m = 1 + \sum_{i=0}^{2^m-1}\alpha_i$.
\end{proof}

Our final result concerns RPPA with the left silver stepsize schedule $\underline{\pi}^{(m)}$. Here, we establish a tight upper bound for the measure $\left\| \nabla f^{\lambda}(x^{N}) \right\|^{2} / (f(x^0) - f(x^\star) )$. This measure is especially relevant in overparameterized models, which are frequently encountered in machine learning applications.
In such models, $f(x^{\star})$ is often a known constant, such as $0$. In these cases, the initial function value residual is directly computable, whereas the initial distance to a minimizer can be difficult to estimate. 

\begin{thom}[Tight convergence rate of RPPA with the left silver stepsize schedule $\underline{\pi}^{(m)}$]\label{thm:subgdnorm-vs-objval}
  For any fixed $m\ge 0$, let $N=2^m$ and $\{ (x^{k},z^{k}) \}_{k=0}^N$ be the sequence generated by Algorithm \ref{rppa} with stepsize schedule $\boldsymbol{\alpha} = \underline{\pi}^{(m)}$ for solving \eqref{eq:merely_cvx_problem} from any starting point 
  $x^0\in \mathrm{dom}(f)$.  
  Let $x^{\star}\in \mathcal{X}^{\star}$ be any minimizer of $f$ and $z^{\star}=\mathrm{Prox}_{\lambda f}(x^{\star})=x^{\star}$. Then, there holds
  $$
  \frac{\lambda}{2}\left\| \nabla f^{\lambda}(x^{N}) \right\|^{2} \le \frac{1}{2T_m} \left( f(x^0) - f(x^\star) \right) \sim  \frac{1}{2 N ^{\log_2 \rho}} \left( f(x^0) - f(x^\star) \right),
  $$
  where $T_m$ is defined in \eqref{def:Tm}. 
  Furthermore, this bound is tight.
\end{thom}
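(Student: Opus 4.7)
The plan is to mirror the proof of Theorem \ref{thm:objval-vs-ptnorm} via the same performance-estimation machinery, but with the extra stepsize $\gamma_m$ placed at the beginning of the schedule rather than at the end, and with the orientations of the key $P^f$ inequalities reversed so that $z^\star$ plays the role of a right-endpoint rather than a left-endpoint. Since the optimality measure is now a squared gradient norm at the final iterate scaled by the initial function value residual, I would first establish the bridge inequality $f(x^0)-f(x^\star) \ge P^f_{z^0,z^\star} + \lambda\|\nabla f^\lambda(x^0)\|^2$; this follows from the subgradient inequality applied at $z^0$ together with $\nabla f^\lambda(x^0)\in\partial f(z^0)$ and $x^0-z^0=\lambda\nabla f^\lambda(x^0)$. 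Any PEP combination that delivers $T_m\lambda\|\nabla f^\lambda(x^N)\|^2 \le P^f_{z^0,z^\star}+\lambda\|\nabla f^\lambda(x^0)\|^2$ (up to a non-negative squared remainder that can be dropped) will then yield the stated bound.

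For the base case $m=0$, we have $N=1$, $\boldsymbol{\alpha}=[\gamma_0]$ and $T_0=\gamma_0^2=\gamma_0+1$. I would write a non-negative combination such as $2\gamma_0 P^f_{z^0,z^\star}+2P^f_{z^0,z^1}+2P^f_{z^1,z^\star}$, possibly augmented by a multiple of $P^f_{x^0,z^0}$ to supply the $\lambda\|\nabla f^\lambda(x^0)\|^2$ term, expand each $P^f$ via \eqref{def:Pf_yz}, substitute $z^0$, $z^1$ and their differences using \eqref{eq:gd_mor_envlp} and \eqref{alg:R-PPA-compact-form}, and invoke identity \eqref{eq:simplify-2} with $r=1$, $s=\gamma_0$. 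The relation $s^2-s-r=\gamma_0^2-\gamma_0-1=0$, which is precisely \eqref{def:gammak} at $m=0$, kills the residual cross terms and collapses the combination to the shape $-T_0\lambda\|\nabla f^\lambda(x^1)\|^2+\lambda\|\nabla f^\lambda(x^0)\|^2$ up to a perfect square, so that the bridge inequality closes the case.

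For $m\ge 1$, the crucial structural observation is that the tail subsequence $\{x^k\}_{k=1}^{2^m}$ is exactly what RPPA from the starting point $x^1$ with the silver schedule $\pi^{(m)}$ would generate. Hence Lemma \ref{lem:RPPA_silver_basic_lem}, applied with indices shifted by one and $x^0$ replaced by $x^1$, supplies the baseline PEP inequality on the silver block. To it I would add the non-negative terms $2\gamma_m P^f_{z^0,z^\star}$, $2\rho^m P^f_{z^0,z^1}$, $(\rho^m+1)P^f_{z^1,z^\star}$, and $\alpha_i P^f_{z^i,z^\star}$ for the indices $i$ interior to the silver block, a choice dictated by mirror symmetry with Theorem \ref{thm:objval-vs-ptnorm} and by the requirement that the ensuing cross-term pattern match the template of \eqref{eq:simplify-2}. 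After substituting $x^1-x^0=-\gamma_m\lambda\nabla f^\lambda(x^0)$ and $x^{2^m}-x^1=-\lambda\sum_{i=1}^{2^m-1}\alpha_i\nabla f^\lambda(x^i)$, invoking \eqref{eq:simplify-2} with $r=\rho^m$, $s=\gamma_m$ (so that $\gamma_m^2-\gamma_m-\rho^m=0$ by \eqref{def:gammak}) collapses the whole expression to $T_m\lambda\|\nabla f^\lambda(x^{2^m})\|^2 \le P^f_{z^0,z^\star}+\lambda\|\nabla f^\lambda(x^0)\|^2$ minus a non-negative square, and the bridge inequality completes the proof. Tightness follows from item (iii) of Lemma \ref{RPPA_general_lower_bound} together with $1+\sum_{k=0}^{N-1}\alpha_k = T_m$.

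The principal obstacle is pinning down the precise weights of the non-negative $P^f$ combination, particularly the coupling between the $\gamma_m$-weighted endpoints and the silver-block terms, and deciding whether an auxiliary $P^f_{x^0,z^0}$ is needed to inject the $\lambda\|\nabla f^\lambda(x^0)\|^2$ contribution, so that after substitution every inner-product term lines up with the shape of \eqref{eq:simplify-2} and only the target $-T_m\lambda\|\nabla f^\lambda(x^{2^m})\|^2$ and $\lambda\|\nabla f^\lambda(x^0)\|^2$ survive. This is a delicate bookkeeping exercise, but it is tightly constrained by the right-silver template, by the recursion $\gamma_m^2=\gamma_m+\rho^m$, and by the block structure of the matrix $A^{(m)}$ of Lemma \ref{lem:silver_basic_lem}; the finiteness of $f(x^0)$ required for the bridge is guaranteed by the standing assumption $x^0\in\mathrm{dom}(f)$.
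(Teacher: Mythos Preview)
Your structural scaffolding is right---the tail $\{x^k\}_{k=1}^{2^m}$ is indeed RPPA with $\pi^{(m)}$ from $x^1$, Lemma~\ref{lem:RPPA_silver_basic_lem} is the correct engine, identity \eqref{eq:simplify-2} with $r=\rho^m$, $s=\gamma_m$ is the right closing tool, and the bridge is exactly $P^f_{x^0,z^0}\ge 0$---but the specific multiplier set you propose, obtained by mirroring Theorem~\ref{thm:objval-vs-ptnorm}, does not close. Already at $m=0$ your combination $2\gamma_0 P^f_{z^0,z^\star}+2P^f_{z^0,z^1}+2P^f_{z^1,z^\star}$ (with or without a multiple of $P^f_{x^0,z^0}$) expands to $2T_0\big(f(z^0)-f(z^\star)\big)-2\lambda(\gamma_0-1)\langle g_1,g_0\rangle-2\lambda\|g_1\|^2$, and after the bridge the residual quadratic form in $(g_0,g_1)$ has a strictly positive coefficient on $\|g_1\|^2$ of the wrong size; no non-negative square can absorb it. The missing ingredient is the \emph{reversed} inequality $P^f_{z^1,z^0}$: the paper's base-case combination is $T_0 P^f_{z^0,z^1}+\gamma_0 P^f_{z^1,z^0}+P^f_{x^0,z^0}+P^f_{z^1,z^\star}$, and it is precisely the interplay of the two opposite orientations $P^f_{z^0,z^1}$ and $P^f_{z^1,z^0}$ that forces the cross term and the $\|g_0\|^2$ term to vanish (both coefficients become $\gamma_0^2-\gamma_0-1=0$), leaving exactly $f(x^0)-f(z^\star)-T_0\lambda\|g_1\|^2$.

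For $m\ge 1$ the paper's combination departs from the mirror heuristic in the same way but more drastically: the silver block from Lemma~\ref{lem:RPPA_silver_basic_lem} is \emph{rescaled} by $2T_m/\rho^{2m}$, the only term involving $z^\star$ is a single $P^f_{z^{2^m},z^\star}$ with weight~$1$, and the extra multipliers pair every interior $P^f_{z^0,z^i}$ with a matching $P^f_{z^{2^m},z^i}$ (both with weight $\tfrac{T_m}{\rho^{2m}}\alpha_i$), together with $\tfrac{T_m}{\rho^{2m}}P^f_{z^0,z^{2^m}}$, $\tfrac{\gamma_m}{\rho^m}P^f_{z^{2^m},z^0}$, and $P^f_{x^0,z^0}$. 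Your terms $(\rho^m+1)P^f_{z^1,z^\star}$, $\alpha_i P^f_{z^i,z^\star}$ and $2\gamma_m P^f_{z^0,z^\star}$ do not appear at all. So the H-duality intuition points you to the right identity and the right parameters $r,s$, but it does not hand you the certificate: the combination here is genuinely asymmetric with respect to Theorem~\ref{thm:objval-vs-ptnorm}, and the reversed-orientation inequalities $P^f_{z^{2^m},z^i}$ and $P^f_{z^{2^m},z^0}$ are essential.
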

\begin{proof}
  For $m=0$, we have $N=1$ and $\boldsymbol{\alpha}= [\gamma_0]$. Note that $0\le P_{x^{0},z^{0}}^{f} 
  = f(x^{0}) - f(z^{0}) - \lambda \left\| \nabla f^{\lambda}(x^{0}) \right\|^{2}$, 
  $0\le P_{z^{1},z^{\star}}^{f} = f(z^{1})- f(z^{\star})$, 
  $0\le  P^f_{z^{1},z^{0}} = f(z^{1})- f(z^{0}) - \left<\nabla f^{\lambda}(x^{0}), z^{1}-z^{0} \right> $, 
  $0\le  P^f_{z^{0},z^{1}} = f(z^{0})- f(z^{1}) - \left<\nabla f^{\lambda}(x^{1}), z^{0}-z^{1} \right>$, and $T_0 = 1+\gamma_0$. We thus have
  \begin{align}
    0 \hspace{7.8pt} \le \hspace{7.8pt} & T_0 P^{f}_{z^{0}, z^{1}} + \gamma_0 P^{f}_{z^{1},z^{0}} + P^{f}_{x^{0},z^{0}} + P^{f}_{z^{1},z^{\star}}  \nonumber\\
    {\overset{\eqref{eq:gd_mor_envlp},\eqref{alg:R-PPA-compact-form}}{=}} & f(x^{0}) - f(z^{\star}) - T_0 \lambda\left< \nabla f^{\lambda}(x^{1}), (\gamma_0-1) \nabla f^{\lambda}(x^{0}) + \nabla f^{\lambda}(x^{1}) \right> \nonumber\\
    & + \gamma_0 \lambda \left< \nabla f^{\lambda}(x^{0}), (\gamma_0-1) \nabla f^{\lambda}(x^{0}) + \nabla f^{\lambda}(x^{1})\right> - \lambda \left\| \nabla f^{\lambda}(x^{0}) \right\|^{2} \nonumber \\
    \overset{\eqref{eq:simplify-2}}{=} \hspace{4.3pt} & f(x^{0}) - f(z^{\star}) - T_0 \lambda \left\| \nabla f^{\lambda}(x^{1}) \right\|^{2} + (\gamma_0^{2}-\gamma_0-1) \lambda\left<\nabla f^{\lambda}(x^{0}),  \nabla f^{\lambda}(x^{0})- \nabla f^{\lambda}(x^{1})\right>, \label{eq:k0-gd-fval}
  \end{align}
  where in the second ``$=$" we have utilized \eqref{eq:simplify-2} with $a=b={\bf 0}$, $c = \lambda ^{1 /2} \nabla f^{\lambda}(x^{0})$, $d=\lambda ^{1 /2} \nabla f^{\lambda}(x^{1})$, $r=1$ and $s=\gamma_0$. Since  $\gamma_0^{2}-\gamma_0-1=0$, 
  \eqref{eq:k0-gd-fval} implies that the desired bound holds for $m=0$.

  For $m\ge 1$, we have $N=2^m$ and $\boldsymbol{\alpha}=\underline{\pi}^{(m)} = [\gamma_m,\pi^{(m)}]$. 
  We define the multiplier matrix $C^{(m)} := \frac{2 T_m}{\rho^{2m}} A^{(m)} \in  \mathbb{R}^{2^m \times 2^m}$, whose components are indexed by $(i,j)\in\{ 1,2, \ldots ,2^m \}^2$. 
  Since that the last $2^m-1$ components of $\underline{\pi}^{(m)}$ are identical to the silver stepsize schedule $\pi^{(m)}$, the sequence $\{ x^{1} \} \cup \{ z^{k} \}_{k=1}^{2^m}$ is generated by Algorithm \ref{rppa} with stepsize schedule $\pi^{(m)}$ from starting point $x^{1}$. 
  Thus, by Lemma \ref{lem:RPPA_silver_basic_lem}, we have
  \begin{align}
    0\le \sum_{i,j \in \{ 1, 2, \ldots ,2^m \}}C^{(m)}_{i,j} P^{f}_{z^{i},z^{j}} =& \frac{2T_m}{\rho^{2m}} \Bigg[ \sum_{i=1}^{2^m-1} \alpha_i \left( f(z^{i}) - f(z^{2^m}) +\left< \nabla f^{\lambda}(x^{i}), x^{1}-z^{i} \right> \right)  \nonumber\\
    &- \frac{(\rho^{2m}-1) \lambda}{2} \left\| \nabla f^{\lambda}(x^{2^m}) \right\|^{2} - \frac{\lambda}{2} \left\| \sum_{i=1}^{2^m-1} \alpha_i \nabla f^{\lambda}(x^{i}) \right\|^{2} \Bigg]. \label{eq:ck-mat-id}
  \end{align}
  Note that $0\le P_{x^{0},z^{0}}^{f} 
  = f(x^{0}) - f(z^{0}) - \lambda \left\| \nabla f^{\lambda}(x^{0}) \right\|^{2}$, $0\le P_{z^{2^m},z^{\star}}^{f} = f(z^{2^m})- f(z^{\star})$, $0\le  P^f_{z^{i},z^{0}} = f(z^{i})- f(z^{0}) - \left<\nabla f^{\lambda}(x^{0}), z^{i}-z^{0} \right> $, $0\le  P^f_{z^{0},z^{i}} = f(z^{0})- f(z^{i}) - \left<\nabla f^{\lambda}(x^{i}), z^{0}-z^{i} \right> $, and $T_m = 1+\gamma_m + \sum_{i=1}^{2^m-1}\alpha_i=\gamma_m + \rho^m$. We thus have
  \begin{align}
    0 \hspace{7.8pt} \le \hspace{7.8pt} & \hspace{-15pt} \sum_{i,j \in \{ 1, \ldots ,2^m \}}C^{(m)}_{i,j} P^{f}_{z^{i},z^{j}} + \frac{T_m}{\rho^{2m}} \left[ \sum_{i=1}^{2^m}\alpha_i \left( P^{f}_{z^{0},z^{i}}+ P^{f}_{z^{2^m},z^{i}} \right)  + P^{f}_{z^{0}, z^{2^m}}  \right] + \frac{\gamma_m}{\rho^m}P^{f}_{z^{2^m}, z^{0}} + P^{f}_{x^{0},z^{0}} + P^{f}_{z^{2^m},z^{\star}} \nonumber \\
    \overset{\eqref{eq:ck-mat-id}}{=} \hspace{4.3pt}& 
    \frac{T_m}{\rho^{2m}} \left(1 + \sum_{i=1}^{2^m-1} \alpha_i \right)  \left( f(z^{0}) - f(z^{2^m}) \right)  + \frac{\gamma_m}{\rho^m} \left(  f(z^{2^m}) - f(z^{0})\right) +   f(x^{0}) - f(z^{0})  + f(z^{2^m})- f(z^{\star}) \nonumber \\
    & + \frac{T_m}{\rho^{2m}} \Bigg( \sum_{i=1}^{2^m} \alpha_i \left<\nabla f^{\lambda}(x^{i}), 2x^{1}-z^{0}-z^{2^m} \right> - (\rho^{2m}-1) \lambda \left\| \nabla f^{\lambda}(x^{2^m}) \right\|^{2} - \lambda \left\| \sum_{i=1}^{2^m-1} \alpha_i \nabla f^{\lambda}(x^{i}) \right\|^{2}  \nonumber \\
    & + \left<\nabla f^{\lambda}(x^{2^m}), z^{2^m} - z^{0} \right> \Bigg)  + \frac{\gamma_m}{\rho^m}  \left<\nabla f^{\lambda}(x^{0}), z^{0} - z^{2^m} \right> - \lambda \left\| \nabla f^{\lambda}(x^{0}) \right\|^{2} \nonumber \\
     {\overset{\eqref{eq:gd_mor_envlp},\eqref{alg:R-PPA-compact-form}}{=}} & f(x^{0}) - f(z^{\star}) + \frac{T_m}{\rho^{2m}} \Bigg( \lambda \left< \sum_{i=1}^{2^m}\alpha_i\nabla f^{\lambda}(x^{i}) ,   (1-\gamma_m) \nabla f^{\lambda}(x^{0}) + \nabla f^{\lambda}(x^{2^m}) \right>  \nonumber \\
    &   - \lambda\left<\nabla f^{\lambda}(x^{2^m}),  \sum_{i=1}^{2^m}\alpha_i\nabla f^{\lambda}(x^{i}) + (\gamma_m-1)\nabla f^{\lambda}(x^{0}) +  \nabla f^{\lambda}(x^{2^m}) \right> \Bigg) - \lambda \left\| \nabla f^{\lambda}(x^{0}) \right\|^{2}  \nonumber \\
    & + \frac{\gamma_m}{\rho^m} \left<\nabla f^{\lambda}(x^{0}),  \sum_{i=1}^{2^m}\alpha_i\nabla f^{\lambda}(x^{i}) + (\gamma_m-1)\nabla f^{\lambda}(x^{0}) + \nabla f^{\lambda}(x^{2^m}) \right> - (\rho^{2m}-1) \lambda \left\| \nabla f^{\lambda}(x^{2^m}) \right\|^{2} \nonumber \\ 
    \overset{\eqref{eq:simplify-2}}{=} \hspace{4.3pt} & f(x^{0}) - f(z^{\star}) - T_m \lambda \left\| \nabla f^{\lambda}(x^{2^m}) \right\|^{2}, \label{eq:kgeq1-gd-fval}
  \end{align}
  where in the second ``$=$" we also used $\sum_{i=1}^{2^m-1}\alpha_i = \sum_{i=0}^{2^m-2}\pi^{(m)}_i = \rho^m-1$ and in the last ``$=$" we have utilized \eqref{eq:simplify-2} with $a={\bf 0}$, $b=\lambda^{1 /2}\sum_{i=1}^{2^m}\alpha_i\nabla f^{\lambda}(x^{i})$, $c=\lambda^{1 /2} \nabla f^{\lambda}(x^{0})$, $d=\lambda^{1 /2} \nabla f^{\lambda}(x^{2^m})$, $r=\rho^m$, $s=\gamma_m$, and the fact that $\gamma_m^{2}-\gamma_m-\rho^m=0$.
  Hence, \eqref{eq:kgeq1-gd-fval} implies that the desired bound also holds for $m\ge 1$. Finally, the tightness of the bound can be verified by using item (iii) of Lemma \ref{RPPA_general_lower_bound}  and the fact that $T_m = 1 + \sum_{i=0}^{2^m-1}\alpha_i$.
\end{proof}

\subsection{Discussion on the optimality of silver stepsize schedules}

Given a fixed stepsize schedule, we obtain a specific instance of the RPPA and can establish its tight convergence rate under certain performance measures. This naturally raises the following questions: What is the optimal stepsize schedule that yields the best tight convergence rate for RPPA? Are the silver stepsize schedules $\pi^{(m)}$, $\overline{\pi}^{(m)}$, and $\underline{\pi}^{(m)}$ that we previously analyzed optimal in this regard? In this section, we aim to address these questions.

First, we consider the optimality of the order of the accelerated convergence rate $O(1 / N^{\log_2 \rho})\approx O(1 / N^{1.2716})$, which we obtained for the RPPA using the (modified) silver stepsize schedules across different measures. 
This order of convergence has been conjectured optimal for GD method without momentum for solving smooth convex optimization problems \cite[Section 1.1]{AP23b} \cite[Remark 1]{GSW24}. Given the close relationship between RPPA and GD method, it is reasonable to conjecture that this order of rate is also optimal for RPPA without momentum, i.e. Algorithm \ref{rppa}.

Next, we examine the exact optimal stepsize schedules, that is, those that achieve the optimal rate even up to the constant.
For any fixed $N \in \mathbb{N}_+$ and $\boldsymbol{\alpha} \in \mathbb{R}^{N}_{++}$, the tight worst-case rate of Algorithm \ref{rppa} with the stepsize schedule $\boldsymbol{\alpha}$ under different measures can be computed numerically using the performance estimation approach proposed in \cite{DT14,THG17b,THG17c}. 
Additionally, for any fixed performance measure, we can conduct a numerical search for the optimal stepsize schedules that minimize the tight worst-case rate. 
The problem of finding the optimal stepsize schedules can be formulated as a nonconvex quadratically constrained quadratic programming problem and be solved numerically by a customized branch-and-bound method (abbreviated as BnB-PEP for later use), as described in \cite{DVR23}. 
In this paper, we have considered the following three measures: 
$\left\| \nabla f^{\lambda}(x^N) \right\| / \left\| x^{0}-x^{\star} \right\|$, 
$(f(z^{N}) - f(x^{\star}))  / \left\| x^{0} - x^{\star} \right\|^{2}$ 
and $\left\| \nabla f^{\lambda}(x^N) \right\|^{2} / (f(x^{0})-f(x^{\star}))$. In the following, we denote the numerically optimal stepsize schedules for these measures as $\boldsymbol{\alpha} ^{\rm gd,dist,\star}$, $\boldsymbol{\alpha} ^{\rm fval,dist,\star}$ and $\boldsymbol{\alpha} ^{\rm gd,fval,\star}$, respectively. 
In Table \ref{tab:Globally-optimal-stepsize-compare}, we compare $\boldsymbol{\alpha} ^{\rm gd,dist,\star}$ with
$\pi^{(m)}$, for which the measure $\left\| \nabla f^{\lambda}(x^N) \right\| / \left\| x^{0}-x^{\star} \right\|$ is used, and $\boldsymbol{\alpha} ^{\rm fval,dist,\star}$ with $\overline{\pi}^{(m)}$, for which the measure $(f(z^{N}) - f(x^{\star}))  / \left\| x^{0} - x^{\star} \right\|^{2}$  is employed. 

For the measure $\left\| \nabla f^{\lambda}(x^N) \right\| / \left\| x^{0}-x^{\star} \right\|$, we observe from  Table \ref{tab:Globally-optimal-stepsize-compare} that the silver stepsize schedule $\pi^{(m)}$ approximates the optimal stepsize schedule $\boldsymbol{\alpha} ^{\rm gd,dist,\star}$ found via BnB-PEP for $m=1,2,3$ (corresponding to $N=1,3,7$, respectively) quite well.
Therefore, we conjecture that the silver stepsize schedule $\pi^{(m)}$ is already optimal for RPPA for the measure $\left\| \nabla f^{\lambda}(x^N) \right\| / \left\| x^{0}-x^{\star} \right\|$.
For the measure $(f(z^{N}) - f(x^{\star})) / \left\| x^{0} - x^{\star} \right\|^{2}$, the constructed right silver stepsize schedule $\overline{\pi}^{(m)}$ approximates the optimal stepsize schedule found by BnB-PEP quite well when $m=0,2$ (corresponding to $N=1,4$, respectively). 
In Table \ref{tab:numeric-compare}, we present the tight worst-case upper bound of $(f(z^{N}) - f(x^{\star})) / \left\| x^{0} - x^{\star} \right\|^{2}$ corresponding to 
$\overline{\pi}^{(m)}$ and  $\boldsymbol{\alpha} ^{\rm fval,dist,\star}$ with $m=0,1,2,3$ (corresponding to $N=1,2,4,8$, respectively).
It can be seen from Table  \ref{tab:numeric-compare} that,
for $m=1$ (and $N=2$), our constructed schedule $\overline{\pi}^{(1)} \approx [1.414214, 2.132242]$ gives a tight worst-case bound of $(f(z^{N}) - f(x^{\star})) / \left\| x^{0} - x^{\star} \right\|^{2}$ to be $0.054988$. This value is larger than that corresponding to $\boldsymbol{\alpha} ^{\rm fval,dist,\star}$, which is $0.54900$. The discrepancy between the two is somehow nonnegligible.
A similar discrepancy is observed for $m=3$ (and $N=8$).
This indicates that $\overline{\pi}^{(m)}$ may be suboptimal, though it is quite close to the optimal stepsize schedule.

The right and left silver stepsize schedules $\overline{\pi}^{(m)}$ and $\underline{\pi}^{(m)}$ mirror each other and provide convergence rates that mirror the relationship between function value residual and subgradient norm (see Theorems \ref{thm:objval-vs-ptnorm} and \ref{thm:subgdnorm-vs-objval}). 
This symmetry has been observed in optimal gradient methods for solving smooth convex optimization problems \cite{GSW24, Kim+23}. It appears closely related to the H-duality theory presented in \cite{Kim+23}. 
Therein, it was shown that reversing the order of steps in methods using certain inductive proofs leads to a conversion between (sub)gradient norm and function value residual convergence.
Due to this fundamental connection, the left silver stepsize schedule $\underline{\pi}^{(m)}$ may also be suboptimal.

%____________________conclusion_______________________

\section{Conclusion} \label{sec-conclusion}

In this paper, we have examined RPPA for solving convex optimization problems. Our analysis encompasses three types of relaxation schedules. For both the constant and the TV's relaxation schedules, we have established non-ergodic and tight $O(1 / N)$ convergence rate results by using the function value residual
or subgradient norm as the optimality measure. 
For the original silver relaxation schedule and two extensions, which permit $\alpha_{k}$ to exceed $2$, we have attained accelerated convergence rates of $O(1/N^{1.2716})$ across three prevalent performance measures.

Some limitations of our analysis are as follows. First, our tight analysis for the constant case can only cover the range $\alpha \in (0, \sqrt{2}]$, rather than the traditional range $(0,2)$. 
This gap might be addressed by leveraging recent advances in GD method in \cite{RGP24}, where tight bounds for the GD method with a constant stepsize across the full range $\alpha \in (0,2)$ were established. 
Second, we have maintained the proximal parameter $\lambda > 0$ as constant throughout all the iterations. This may be overly restrictive in practice.
Hence, conducting an in-depth analysis of RPPA with varying proximal parameters $\lambda_{k}$ presents an interesting avenue for future research. It remains an open task for further studies.
Moreover, there is also room for improvement in the constructed silver stepsize schedules $\overline{\pi}^{(m)}$ and $\underline{\pi}^{(m)}$, as they do not precisely match the numerical optimal stepsize schedules. 
Determining the exact optimal stepsize schedule also remains an open problem for future study.

\begin{table}[H]
\begin{minipage}[c]{0.68\textwidth}
  \centering
  {\footnotesize{}}%
  \begin{tabular}{ccccc}
  \toprule 
  \multirow{2}{2em}{ \centering{} \footnotesize{}$N$} & \multicolumn{2}{c}{{\footnotesize{}Measure: $\frac{\left\| \nabla f^{\lambda}(x^N) \right\| }{\left\| x^{0}-x^{\star} \right\|} $}} & \multicolumn{2}{c}{{\footnotesize{}Measure: $ \frac{f(z^{N}) - f(x^{\star})}{\left\| x^{0} - x^{\star} \right\|^{2}}  $}} \tabularnewline
  \cmidrule{2-3} \cmidrule{4-5}
  & { \centering{} \footnotesize{}$\pi^{(m)}$} & { \centering{} \footnotesize{}$\boldsymbol{\alpha}^{\rm gd,dist,\star}$} & { \centering{} \footnotesize{}$\overline{\pi}^{(m)}$} & { \centering{} \footnotesize{}$\boldsymbol{\alpha}^{\rm fval, dist,\star}$}\tabularnewline
  \midrule 
  {\footnotesize{}$1$} & {\footnotesize{}$ \left[\begin{array}{c}
      1.414214\end{array}\right]$ } & {\footnotesize{}$\left[\begin{array}{c}
  1.414334 \end{array}\right]$ } & {\footnotesize{}$\left[\begin{array}{c}
      1.618034\end{array}\right]$ } & {\footnotesize{}$\left[\begin{array}{c}
  1.618035 \end{array}\right]$ }\tabularnewline
  \midrule 
  {\footnotesize{}$2$} & {\footnotesize{} $-$ } & {\footnotesize{}$\left[\begin{array}{c}
  1.600519 \\
  1.414482
  \end{array}\right]$ } & {\footnotesize{}$ \left[\begin{array}{c}
      1.414214\\
      2.132242
    \end{array}\right]$ } & {\footnotesize{}$\left[\begin{array}{c}
  1.515086\\
  2.038664 \end{array}\right]$ }\tabularnewline
  \midrule 
  {\footnotesize{}$3$} & {\footnotesize{} $  \left[\begin{array}{c}
    1.414214 \\
    2.0 \\
    1.414214
    \end{array}\right] $ } & {\footnotesize{}$\left[\begin{array}{c}
    1.414745 \\
    2.000451 \\
    1.414115 
    \end{array}\right]$ } & {\footnotesize{} $-$ } & {\footnotesize{}$\left[\begin{array}{c}
    1.414211\\
    1.601232\\
    2.565298 \end{array}\right]$ }\tabularnewline
  \midrule 
  {\footnotesize{}$4$} & {\footnotesize{} $-$ } & {\footnotesize{}$\left[\begin{array}{c}
    1.414214 \\
    1.601232 \\
    2.260578 \\
    1.414214 
    \end{array}\right]$ } & {\footnotesize{} $  \left[\begin{array}{c}
    1.414214 \\
    2.0 \\
    1.414214\\
    2.965447
    \end{array}\right] $ } & {\footnotesize{}$\left[\begin{array}{c}
    1.414214\\
    2.0\\
    1.414213\\
    2.965447 \end{array}\right]$ }\tabularnewline
  \midrule 
  {\footnotesize{}$7$} & {\footnotesize{} $  \left[\begin{array}{c}
    1.414214 \\
    2.0 \\
    1.414214\\
    3.414214 \\
    1.414214 \\
    2.0 \\
    1.414214\\
    \end{array}\right]$ } & {\footnotesize{}$\left[\begin{array}{c}
    1.414402 \\
    2.000943 \\
    1.413279\\
    3.411508 \\
    1.413362 \\
    2.000698 \\
    1.414120\\
    \end{array}\right]$ } & {\footnotesize{} $-$ } & {\footnotesize{}$\left[\begin{array}{c}
    1.414214 \\
    1.601232 \\
    3.989651 \\
    1.414214 \\
    2.745299 \\
    1.515087 \\
    2.038664 \end{array}\right]$ }\tabularnewline
  \midrule 
  {\footnotesize{}$8$} & {\footnotesize{} $-$ } & {\footnotesize{}$\left[\begin{array}{c}
    1.414215 \\
    2.000003 \\
    1.414214\\
    3.754383 \\
    1.414213 \\
    1.601232 \\
    2.260578\\
    1.414214 \end{array}\right]$ } & {\footnotesize{} $ \left[\begin{array}{c}
    1.414214 \\
    2.0 \\
    1.414214\\
    3.414214 \\
    1.414214 \\
    2.0 \\
    1.414214\\
    4.284319
    \end{array}\right]$ } & {\footnotesize{}$\left[\begin{array}{c}
    1.414214 \\
    2.0 \\
    1.414214\\
    5.004706 \\
    1.414214 \\
    2.0 \\
    1.414214\\
    2.965447 \end{array}\right]$ }\tabularnewline
  \bottomrule
  \end{tabular}
  \caption{{Comparison between $\boldsymbol{\alpha} ^{\rm gd,dist,\star}$ and
$\pi^{(m)}$, as well as $\boldsymbol{\alpha} ^{\rm fval,dist,\star}$ and $\overline{\pi}^{(m)}$, where
   $\boldsymbol{\alpha}^{\rm gd, dist,\star}$ and $\boldsymbol{\alpha}^{\rm fval, dist,\star}$ denote
   the optimal stepsize schedules computed by BnB-PEP under the measures $\left\| \nabla f^{\lambda}(x^N) \right\| / \left\| x^{0}-x^{\star} \right\|$  and $(f(z^{N}) - f(x^{\star})) / \left\| x^{0} - x^{\star} \right\|^{2}$, respectively.
   Recall that $\pi^{(m)}$ is available only for $N=2^m-1$ ($m\geq 1$), and $\overline{\pi}^{(m)} $ is available only for $N=2^m$ ($m\geq 0$). \label{tab:Globally-optimal-stepsize-compare}
   }}
\end{minipage}
\hfill
\begin{minipage}[c]{0.3\textwidth}
    \centering
    \begin{tabular}{cccc}
      \toprule 
      \multirow{2}{2em}{ \centering{} \footnotesize{}$N$} & $\overline{\pi}^{(m)}$ & $\boldsymbol{\alpha} ^{\rm fval, dist,\star}$ \tabularnewline
      \cmidrule{2-3}
      & \multicolumn{2}{c}{
      \begin{tabular}[c]{c}
          \footnotesize{} Tight upper bound    \\
        \footnotesize{} of $\frac{f(z^N) - f(x^{\star})}{\left\| x^{0}-x^{\star} \right\|^{2}}$
      \end{tabular}
      } \tabularnewline
      \midrule
      {\footnotesize{}$1$} & 0.095492 & 0.095492 \tabularnewline
      \midrule
      {\footnotesize{}$2$} & \bf 0.054988 &  0.054900 \tabularnewline
      \midrule
      {\footnotesize{}$4$} & 0.028429 & 0.028429 \tabularnewline
      \midrule
      {\footnotesize{}$8$} & \bf 0.013620 & 0.013422 \tabularnewline
      \midrule
      \bottomrule
    \end{tabular}
    \caption{Comparison between the tight upper bound of $\frac{f(z^N) - f(x^{\star})}{\left\| x^{0}-x^{\star} \right\|^{2}}$ corresponding to 
 $\overline{\pi}^{(m)}$ and 
    $\boldsymbol{\alpha} ^{\rm fval, dist,\star}$
    with $m = 0, 1, 2, 3$ (corresponding to $N=1,2,4,8$, respectively). 
    In the computation, we set $\lambda=1$ for RPPA. 
    Each table entry is obtained through solving the corresponding performance estimation problem.
    We observe that  $\overline{\pi}^{(m)}$
    attains larger tight worst-case bounds than $\boldsymbol{\alpha} ^{\rm fval, dist,\star}$ when $m=1,3$ (corresponding to $N=2, 8$, respectively). 
    This may indicate that $\overline{\pi}^{(m)}$ is suboptimal. 
    \label{tab:numeric-compare}}
\end{minipage}
\end{table}

\bibliographystyle{realpha} 
\bibliography{ref.bib}

\end{document}